\theoremstyle{plain}
\newtheorem{theorem}{Theorem}
\newtheorem{lemma}[theorem]{Lemma}
\newtheorem{proposition}[theorem]{Proposition}
\newtheorem{definition}[theorem]{Definition}
\numberwithin{theorem}{section}
\numberwithin{equation}{section}
\numberwithin{table}{section}
\numberwithin{figure}{section}
\theoremstyle{remark}
\newtheorem*{remark}{Remark}
\begin{document}

\title[The Euclidean Picard modular groups]{Generators for the Euclidean Picard Modular Groups}
\author{Tiehong zhao}
\address{Institut de Math\'{e}matiques\\Universit\'{e} Pierre et Marie Curie\\ 4, place Jussieu\\F-75252 Paris, France}
\email{zhao@math.jussieu.fr}

\date{}
\thanks{The author is supported by the China-funded Postgraduates Studying Aboard Program for Building Top
University.}

\begin{abstract}
The goal of this article is to show that five explicitly given transformations, a rotation, two screw Heisenberg rotations,
 a vertical translation and an involution generate the Euclidean Picard modular groups with coefficient in the Euclidean ring
  of integers of a quadratic imaginary number field. We also obtain the relations of the isotropy subgroup by analysis of the
  combinatorics of the fundamental domain in Heisenberg group.
\end{abstract}

\maketitle

\section{Introduction}
\label{sec-intro}

Let $\mathcal{K}=\mathbb{Q}(\sqrt{-d})$ be a quadratic imaginary number field. Let $\mathcal{O}_d$ be the ring
of algebraic integers of $\mathcal{K}$. The Bianchi groups $PSL_2(\mathcal{O}_d)$ are the simplest numerically defined discrete groups. In number theory they have been used to study the zeta-functions of binary Hermitian forms over the rings $\mathcal{O}_d$. As the isometric groups acting on the half-upper space, they are of interest in the theory of Fuchsian groups and the related theory of Riemann surfaces. The Bianchi groups can be considered as the natural algebraic generalization of the classical modular group $PSL_2(\mathbb{Z})$. A good general reference for the Bianchi groups and their relation to the modular group is \cite{Fi}. As a natural generalization of the Bianchi groups, the subgroups of $PU(2,1)$ with coefficients in $\mathcal{O}_d$ are called {\it Picard modular groups}, denoted by $PU(2,1;\mathcal{O}_d)$. These groups have attracted a great deal of attention both
for their intrinsic interest as discrete groups and also for their applications in complex hyperbolic geometry
 (as the holomorphic automorphism subgroups).

A general method to determine finite presentations for each Bianchi group $PSL_2(\mathcal{O}_d)$ was developed by Swan \cite{Sw} based on geometrical work of Bianchi, while a separate purely algebraic method was given by Cohn \cite{Co}. In general, fundamental domains for Lie groups were studied by \cite{GR}, but the complex hyperbolic space is a particularly challenging case since no existence of totally geodesic hypersurface. So far very few examples of
 complex hyperbolic lattices have been constructed explicitly. Due to the famous paper \cite{Mo} of Mostow, other explicit
  constructions of fundamental domains for lattices in $PU(2,1$) were obtained, see for example, the work of Goldman
  and Parker \cite{GP}, Deraux, Falbel and Paupert \cite{DFP}, Schwartz \cite{Sch}, the survey paper of Parker \cite{Pa}. In particular,
  the group $PU(2, 1;\mathcal{O}_3) = PU(2, 1;\mathbb{Z}[\omega]),$
   where $\omega$ is a cube root of unity was studied by Falbel and Parker in \cite{FP} and its sister was treated recently in \cite{Zh}.
   Analogously a fundamental domain of Gauss-Picard group $PU(2,1;\mathbb{Z}[i])$ was described in (\cite{FFP}, \cite{FL1}, \cite{FL2}) and
   analysis of the combinatorics of the fundamental domain gives rise to a presentation of the group in \cite{FFP}.

In this paper we give a description of generators for certain Picard modular groups $PU(2,1;\mathcal{O}_d)$ where the
ring $\mathcal{O}_d$ is Euclidean except for $d=1,3$ (these two exceptional cases have been studied in many aspects).
Among the quadratic imaginary number rings $\mathcal{O}_d$ only $\mathcal{O}_1,\mathcal{O}_2,\mathcal{O}_3,\mathcal{O}_7,\mathcal{O}_{11}$
have a Euclidean algorithm, see \cite{ST}, although there is a larger finite collection of $\mathcal{O}_d$'s
(such as $d=1,$ $2,$ $3,$ $7,$ $11,$ $19,$ $43$, $67,$ $163$, see \cite{Zi}) which have class number one.
For these values of $d$ the orbifold $\mathbf{H}^2_\mathbb{C}/PU(2, 1;\mathcal{O}_d)$ has only one cusp.
 The method is based on the construction of various shapes of precisely fundamental domains for the stabilisers of infinity
 of $PU(2,1;\mathcal{O}_d)$ and then on a determination of several neighboring isometric spheres such that the union of the boundaries of these isometric
 spheres contains the fundamental domain of the stabiliser, which was used in (\cite{FP}, \cite{FFP}, \cite{Zh}). Compared with other groups, the generators of these groups in (\cite{FP}, \cite{FFP}, \cite{Zh}) are easy to be obtained since the fundamental domain constructed lies completely inside the
  boundary of the isometric sphere centred at origin. Again this reflects the underlying number theory; $\mathcal{O}_1$
  and $\mathcal{O}_3$ have non-trivial units while the other three do not. A simple algorithm to decompose any
  transformation in the Picard group $PU(2,1;\mathcal{O}_1)$ as a product of the generators was given in \cite{FFLP},
   one would be interesting to extend their method to other Picard modular groups. However, it would also be important to find
    the generators in terms of geometric ways which will provide more informations that one continue to construct a fundamental
  domain explicitly for each of Picard modular groups.

  I would like to thank my advisor E. Falbel for his warm encouragements all along
this work and for a number of helpful comments.

\section{Complex hyperbolic space}
\label{sec-complex}
\subsection{The Siegel domain}

\medskip

In this section we give the necessary background material on complex hyperbolic space. To know more details of this material we refer the reader to \cite{Go}.

Let $\mathbb{C}^{2,1}$ denote the complex vector space equipped with the Hermitian form defined by
\begin{equation*}\langle \mathbf{z},\mathbf{w}\rangle=z_1\bar{w}_3+z_2\bar{w}_2+z_3\bar{w}_1,
\end{equation*}
where $\mathbf{z}$ and $\mathbf{w}$ be the column vectors $[z_1,z_2,z_3]^t$ and $[w_1,w_2,w_3]^t$ respectively.
The projective model of complex hyperbolic space $\mathbf{H}_{\mathbb{C}}^2$ is defined to be the collection of negative
lines in $\mathbb{C}^{2,1}$, namely, those points $\mathbf{z}$ satisfying $\langle \mathbf{z},\mathbf{z}\rangle<0$.

We mainly take the {\it Siegel domain} $\mathfrak{S}$ as a upper half-space model for the complex hyperbolic space, that is given by
$$\mathfrak{S}=\{(z_1,z_2)\in\mathbb{C}^2:2\Re ez_1+|z_2|^2<0\}.$$ The boundary of the {\it Siegel domain} $\mathfrak{S}$ is identified with the one-point compactification of the Heisenberg group. The Heisenberg group $\mathfrak{R}$ is $\mathbb{C}\times \mathbb{R}$ with the group law
\begin{equation*}(\zeta_1,t_1)\diamond(\zeta_2,t_2)=(\zeta_1+\zeta_2,t_1+t_2+2\Im m(\zeta_1\bar{\zeta}_2)).\end{equation*}
The Cygan metric on $\mathfrak{R}$ is given by
$$
\rho_0((\zeta_1,t_1),(\zeta_2,t_2))=\left||\zeta_1-\zeta_2|^2-it_1+it_2-2i\Im m(\zeta_1\bar{\zeta}_2)\right|,
$$
in terms of the operation of Heisenberg group, that is $\left|(\zeta_1,t_1)^{-1}\diamond(\zeta_2,t_2)\right|$.

We can extend the Cygan metric to an incomplete metric on
$\bar{\mathfrak{S}}-\{\infty\}$ as follows
$$\tilde{\rho}_0=\left||\zeta_1-\zeta_2|^2+|u_1-u_2|-it_1+it_2-2i\Im m(\zeta_1\bar{\zeta}_2)\right|.$$

The Siegel domain $\mathfrak{S}$ is parametrised in horospherical
coordinates by
\begin{equation}\label{eq:2-1}(\zeta,t,u)\longrightarrow\left[\begin{array}{c}(-|\zeta|^2-u+it)/2\\ \zeta\\1\end{array}\right]\end{equation}
and the point at infinity being
\begin{equation*}q_{\infty}=\left[\begin{array}{c}1\\0\\0\end{array}\right].\end{equation*}
Then $\mathfrak{S}=\mathfrak{R}\times\mathbb{R}_+$ and $\partial\mathfrak{S}=(\mathfrak{R}\times\{0\})\cup\{q_{\infty}\}$.

\medskip

\subsection{Complex hyperbolic isometries}

\medskip

Let $U(2,1)$ be the group of matrices that are unitary with respect to the form $\langle .,.\rangle$. The group of holomorphic isometries of complex hyperbolic space is the projective unitary group
$PU(2,1)=U(2,1)/U(1)$, with a natural identification $U(1)=\{e^{i\theta}I,\theta\in[0,2\pi)\}.$ We now describe the action of the stabiliser of $q_\infty$ on the Heisenberg group.

The Heisenberg group acts on itself by {\it Heisenberg translations}. For $(\tau,v)\in\mathfrak{R}$, this is
\begin{equation*}T_{(\tau,v)}:(\zeta,t)\mapsto(\zeta+\tau,t+v+2\Im m(\tau\bar{\zeta}))=(\tau,v)\diamond(\zeta,t).\end{equation*}
Heisenberg translation by $(0,v)$ for any $v\in\mathbb{R}$ is called {\it vertical translation} by $v$.

The unitary group $U(1)$ acts on the Heisenberg group by {\it Heisenberg rotations}. For $e^{i\theta}\in U(1)$, the rotation fixing $q_0=(0,0,0)$ is given by \begin{equation*}R_{\theta}:(\zeta,t)\mapsto(e^{i\theta}\zeta,t).\end{equation*} All other Heisenberg rotations may be obtained from these by conjugating by a Heisenberg translation.

For $\lambda\in\mathbb{R}_+$, {\it Heisenberg dilation} by $\lambda$ fixing $q_\infty$ and $q_0=(0,0,0)\in\partial\textbf{H}^2_{\mathbb{C}}$ is given by
\begin{equation*}D_\lambda:(\zeta,t)\mapsto(\lambda \zeta,\lambda^2 t).\end{equation*}
All other Heisenberg dilations fixing $q_\infty$ may be obtained by conjugating by a Heisenberg translation.

The stabiliser of $q_{\infty}$ in $PU(2,1)$ is generated by all Heisenberg translations, rotations and dilations. However, only Heisenberg translations and rotations are isometric with respect to various natural metrics on $\mathfrak{R}$. For this reason the group generated by all Heisenberg translations and rotations, which is the semidirect product $U(1)\ltimes\mathfrak{R}$, is called the {\it Heisenberg isometry group} $Isom(\mathfrak{R})$. The nontrivial central elements of the Heisenberg isometry group are precisely the vertical translations. In particular, each element of $Isom(\mathfrak{R})$ preserves every horosphere.

There is a canonical projection from $\mathfrak{R}$ to $\mathbb{C}$ called {\it vertical projection} and denoted by $\Pi$, given by $\Pi: (\zeta,t)\longmapsto \zeta.$
Using the exact sequence
\begin{equation*}0\longrightarrow\mathbb{R}\longrightarrow\mathfrak{R}\stackrel{\Pi}{\longrightarrow}\mathbb{C}\longrightarrow0,\end{equation*}
we obtain the exact sequence (see Scott \cite{Sc} page 467)
\begin{equation}\label{eq:2-2}0\longrightarrow\mathbb{R}\longrightarrow  Isom(\mathfrak{R})\stackrel{\Pi_{*}}{\longrightarrow}  Isom(\mathbb{C})\longrightarrow1.\end{equation}
Here $Isom(\mathbb{C})$ is the group of orientation preserving
Euclidean isometries of $\mathbb{C}$.

Observe the elements in $Isom(\mathbb{C})$ can be represented by
matrices in $GL(2,\mathbb{C})$ of the form
\begin{equation*}\left[\begin{array}{cc}
 e^{i\theta}&\zeta_0\\0&1
\end{array}\right]
\left[\begin{array}{c}
 \zeta\\1
 \end{array}\right]=\left[\begin{array}{c}
 e^{i\theta}\zeta+\zeta_0\\1
\end{array}\right]\end{equation*}
Therefore, the map $\Pi_{*}$ can be given by
\begin{equation}\label{eq:2-3}\Pi_{*}:\left[ \begin{array}{ccc}
 1&-\bar{\zeta_{0}}e^{i\theta}&(-|\zeta_{0}|^{2}+it_{0})/2\\0&e^{i\theta}&\zeta_{0}\\0&0&1
\end{array}\right]\longrightarrow\left[ \begin{array}{cc}
 e^{i\theta}&\zeta_0\\0&1
\end{array}\right].\end{equation}
It is clear that
\begin{equation*}Ker(\Pi_{*})=\left\lbrace \left[ \begin{array}{ccc}
 1&0&it_{0}/2\\0&1&0\\0&0&1
\end{array}\right]:t_{0}\in\mathbb{R}\right\rbrace\end{equation*}
is the group of vertical translations fixing $q_{\infty}$.

\medskip

\subsection{Isometric spheres}

\medskip

Given an element $G\in PU(2,1)$ with satisfying $G(q_{\infty})\neq
q_{\infty}$, we define the isometric sphere of $G$ to be the
hypersurface
$$\left\{\mathbf{z}\in\textbf{H}^2_{\mathbb{C}}:|\langle \mathbf{z},q_{\infty}\rangle|=|\langle \mathbf{z},G^{-1}(q_{\infty})\rangle|\right\}.$$
For example, the isometric sphere of
$$I_0=\left[\begin{array}{ccc}
0&0&1\\
0&-1&0\\
1&0&0
\end{array}\right]
$$
is \begin{equation}\label{eq:2-4}
\mathcal{B}_0=\left\{(\zeta,t,u)\in\mathfrak{S}:\left||\zeta|^2+u+it\right|=2\right\}\end{equation} in horospherical
coordinates or
\begin{equation}\label{eq:2-5}\mathcal
{B}_0=\left\{[z_1,z_2,z_3]\in\textbf{H}^2_{\mathbb{C}}:|z_1|=|z_3|\right\}\end{equation}
in homogeneous coordinates.

All other isometric spheres are images of $\mathcal
{B}_0$
by Heisenberg dilations, rotations and translations. Thus the
isometric sphere with radius $r$ and centre $(\zeta_0,t_0,0)$ is given
by
$$\left\{(\zeta,t,u):\left||\zeta-\zeta_0|^2+u+it-it_0+2i\Im m(\zeta\bar{\zeta}_0)\right|=r^2\right\}.$$

If $G$ has the matrix form \begin{equation}\label{eq:2-6}\left[\begin{array}{ccc}
a&b&c\\
d&e&f\\
g&h&j
\end{array}\right]
,\end{equation} then $G(q_{\infty})\neq q_{\infty}$ if and
only if $g\neq0$. The isometric sphere of $G$ has radius
$r=\sqrt{2/|g|}$ and centre $G^{-1}(q_{\infty})$, which in
horospherical coordinates is
$$(\zeta_0,t_0,0)=(\bar{h}/\bar{g},2\Im m(\bar{j}/\bar{g}),0).$$

Isometric spheres are examples of bisectors. Mostow \cite{Mo} showed that
a bisector is the preimage of a geodesic, called spine, under
orthogonal projection onto the unique complex line containing it.
The fibres of this projection are complex lines called the slices of
the bisector. Goldman \cite{Go} showed that a bisector is the union of
all totally real Larangian planes containing the spine. Such
Lagrangian planes are called the meridians.

\section{On the structure of the stabiliser}

In this section we will obtain the generators and relations of the stabiliser of Picard modular groups by analysis of the fundamental domain in Heisenberg group.

Let $\mathcal{O}_d$ be the ring of integers in the quadratic imaginary number field $\mathbb{Q}(i\sqrt{d})$, where $d$ is
a positive square-free integer. If $d\equiv1,2\ (mod\ 4)$, then $\mathcal{O}_d=\mathbb{Z}[i\sqrt{d}]$ and if
 $d\equiv3\ (mod\ 4)$, then $\mathcal{O}_d=\mathbb{Z}[\omega_d]$, where $\omega_d=(1+i\sqrt{d})/2   $. The group $\Gamma_d=PU(2,1;\mathcal{O}_d)$
 is called {\it Euclidean Picard modular group} if the ring $\mathcal{O}_d$ is Euclidean, namely, only the rings $\mathcal{O}_1,\mathcal{O}_2,\mathcal{O}_3,\mathcal{O}_7,\mathcal{O}_{11}$. Further relative to amalgamation property, these five groups can be subclassified into three groupings $\{\Gamma_1\},$ $\{\Gamma_3\},$ $\{\Gamma_2,\Gamma_7,\Gamma_{11}\}$. Since two classes $\{\Gamma_1\},\{\Gamma_3\}$ (\textit{c.f.} \cite{FP}, \cite{FFP}) have been studied in detail, we mainly describe the remaining class $\{\Gamma_2,\Gamma_7,\Gamma_{11}\}$.

\subsection{The stabiliser of $q_\infty$}

First we want to analyse $(\Gamma_d)_\infty$ with $d=2,7,11$, the stabiliser of $q_\infty$. Every element of $(\Gamma_d)_\infty$ is upper triangular and its diagonal entries are units in $\mathcal{O}_d$. Recall that the units of $\mathcal{O}_1$ are $\pm1,\pm i$, they are $\pm1,\pm\omega,\pm\omega^2$ for $\mathcal{O}_3$ and they are $\pm1$ for others. Therefore $(\Gamma_d)_\infty$ contains no dilations and so is a subgroup of $Isom(\mathfrak{R})$ and fits into the exact sequence as
$$0\longrightarrow\mathbb{R}\cap(\Gamma_d)_\infty\longrightarrow
(\Gamma_d)_\infty\stackrel{\Pi_{*}}{\longrightarrow}\Pi_{*}((\Gamma_d)_\infty)\longrightarrow1.$$

We can write the isometry group of the integer lattice as
\begin{equation*}
Isom(\mathcal{O}_d)=\left\{\left[\begin{array}{cc}\alpha&\beta\\0&1\end{array}\right]:\alpha,\beta\in\mathcal{O}_d, \alpha\ \text{is a unit}\right\}.
\end{equation*}

We now find the image and kernel in this exact sequence.

\begin{proposition}\label{eq:3-1}The stabiliser
$(\Gamma_d)_\infty$ of $q_{\infty}$ in $\Gamma_d$ satisfies
$$0\longrightarrow 2\sqrt{d}\mathbb{Z}\longrightarrow(\Gamma_d)_\infty\stackrel{\Pi_{*}}{\longrightarrow}\Delta\longrightarrow1,$$
where $\Delta\subset Isom(\mathcal{O}_d)$ is of index 2 if $d\equiv2(mod\ 4)$ and $\Delta=Isom(\mathcal{O}_d)$ if $d\equiv3(mod\ 4)$.
\end{proposition}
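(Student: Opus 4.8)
The plan is to identify the kernel and image of $\Pi_*$ separately, using the explicit description \eqref{eq:2-3} of $\Pi_*$ together with the horospherical parametrisation \eqref{eq:2-1}. Since every element of $(\Gamma_d)_\infty$ is upper triangular with unit diagonal entries and, as noted, contains no dilations, it lies in $Isom(\mathfrak{R})$ and can be written in the normal form
\begin{equation*}
\left[\begin{array}{ccc}
1 & -\bar{\zeta_0}e^{i\theta} & (-|\zeta_0|^2+it_0)/2\\
0 & e^{i\theta} & \zeta_0\\
0 & 0 & 1
\end{array}\right],
\end{equation*}
where the $(1,1)$ and $(3,3)$ entries have been normalised to $1$ by an element of $U(1)$. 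The constraint that this matrix have entries in $\mathcal{O}_d$ forces $e^{i\theta}$ to be a unit of $\mathcal{O}_d$ and $\zeta_0\in\mathcal{O}_d$, so the image $\Pi_*((\Gamma_d)_\infty)$ is contained in $Isom(\mathcal{O}_d)$; the real content is to see which such pairs $(e^{i\theta},\zeta_0)$ actually arise, i.e. for which of them one can choose $t_0\in\mathbb{R}$ making the $(1,3)$ entry $(-|\zeta_0|^2+it_0)/2$ lie in $\mathcal{O}_d$.

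For the kernel: $\Pi_*$ kills exactly the vertical translations, which have matrix $\mathrm{diag}$-form with $(1,3)$ entry $it_0/2$; requiring this to be in $\mathcal{O}_d$ means $it_0/2\in\mathcal{O}_d\cap i\mathbb{R}$. When $d\equiv1,2\pmod 4$ we have $\mathcal{O}_d=\mathbb{Z}[i\sqrt d]$, so $\mathcal{O}_d\cap i\mathbb{R}=i\sqrt d\,\mathbb{Z}$, giving $t_0\in 2\sqrt d\,\mathbb{Z}$; when $d\equiv 3\pmod 4$, $\mathcal{O}_d=\mathbb{Z}[\omega_d]$ with $\omega_d=(1+i\sqrt d)/2$, and again a short computation shows $\mathcal{O}_d\cap i\mathbb{R}=i\sqrt d\,\mathbb{Z}$ (the ``half-integer'' elements $a+b\omega_d$ are purely imaginary only when $a+b/2=0$, i.e. $b$ even, reducing to the previous case). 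Either way $\mathbb{R}\cap(\Gamma_d)_\infty=2\sqrt d\,\mathbb{Z}$, which pins down the left-hand term of the exact sequence.

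For the image, the key observation is that once one vertical translation with the minimal parameter $2\sqrt d$ is available (it is, by the kernel computation), the set of admissible $(e^{i\theta},\zeta_0)$ is determined modulo that translation by whether $(-|\zeta_0|^2+it_0)/2\in\mathcal{O}_d$ is solvable in $t_0$. Write $\zeta_0=a+b\,\eta$ in terms of the standard $\mathbb{Z}$-basis $\{1,\eta\}$ of $\mathcal{O}_d$ (with $\eta=i\sqrt d$ or $\eta=\omega_d$); then $-|\zeta_0|^2$ is an integer in the $d\equiv 3$ case but picks up a genuine half-integer part proportional to $b^2$ (equivalently $b$) when $d\equiv 2\pmod 4$, since $|\,i\sqrt d\,|^2=d\equiv 2$ is even yet cross terms $-2ab\cdot 0$ vanish and the obstruction is $\tfrac12(a^2+db^2)\bmod 1$. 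Tracking this parity shows that in the $d\equiv 2\pmod 4$ case exactly the index-two subgroup $\Delta\subset Isom(\mathcal{O}_d)$ (cut out by a $\mathbb{Z}/2$ congruence condition relating the real part of $\zeta_0$ to $\theta$ and $b$) is realised, whereas for $d\equiv 3\pmod 4$ there is no obstruction and $\Delta=Isom(\mathcal{O}_d)$. I expect this parity/obstruction bookkeeping — correctly identifying the $\mathbb{Z}/2$ condition defining $\Delta$ and verifying it is exactly index two, not larger — to be the main obstacle; the kernel computation and the containment $\Pi_*((\Gamma_d)_\infty)\subseteq Isom(\mathcal{O}_d)$ are routine.
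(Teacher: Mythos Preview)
Your approach is essentially the paper's: compute the kernel of $\Pi_*$ directly, then determine which elements of $Isom(\mathcal{O}_d)$ lift by checking when the $(1,3)$ entry $(-|\zeta_0|^2+it_0)/2$ can be made to lie in $\mathcal{O}_d$. The kernel computation and the $d\equiv 3\pmod 4$ case are correct.

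There is, however, a slip in your parity bookkeeping for $d\equiv 2\pmod 4$. With $\zeta_0=a+bi\sqrt d$ you correctly write the obstruction as $\tfrac12(a^2+db^2)\bmod 1$, but then attribute the half-integer part to $b^2$. Since $d$ is even, $\tfrac{d}{2}b^2\in\mathbb{Z}$ always; the fractional part is $\tfrac12 a^2\bmod 1$, which vanishes precisely when $a$ is even. So the index-two condition is $a\equiv 0\pmod 2$, i.e.\ the \emph{real} part of $\zeta_0$ is even---exactly the paper's condition $m\equiv 0\pmod 2$. Also, the condition does not involve $\theta$: since the only units of $\mathcal{O}_d$ here are $\pm 1$, the rotation $\hat R_{-1}$ lifts to $R_1^{(d)}=\mathrm{diag}(1,-1,1)\in\Gamma_d$, so the obstruction lives entirely in the translation part. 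Once you correct the variable and drop the spurious dependence on $\theta$, your argument coincides with the paper's.
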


\begin{proof} Although we only consider the cases $d=2,7,11$, the ring $\mathcal{O}_2$ represents those for the values of $d$ with $d\equiv2(mod\ 4)$ and the rings $\mathcal{O}_7, \mathcal{O}_{11}$ represent those of the values $d\equiv3(mod\ 4)$, the remaining case is the same as $\mathcal{O}_1$ which has been done in \cite{FFP}. Observe that $Isom(\mathcal{O}_d)$ is generated by the subgroup of translations
$$\left\{\hat{T}_\beta=\left[\begin{array}{cc}1&\beta\\0&1\end{array}\right]:\beta\in\mathcal{O}_d\right\}$$
and the finite subgroup of order two
$$\left\{\hat{R}_\alpha=\left[\begin{array}{cc}\alpha&0\\0&1\end{array}\right]:\alpha\in\mathcal{O}_d,\text{$\alpha$ is a unit}\right\}.$$
Then, to understand $\Delta\subset Isom(\mathcal{O}_d)$, it suffices to determine which translations can be lifted. We divide into two cases to complete the proof.

\medskip

(i)\ {\bf The case $\mathcal{O}_d$ with $d\equiv2(mod\ 4)$}

 Suppose that $\beta\in\mathcal{O}_d=\mathbb{Z}[i\sqrt{d}]$ and consider the translation $\hat{T}_\beta$ by $\beta$ in $\mathbb{Z}[i\sqrt{d}]$ given above. The preimage of $\hat{T}_\beta$ under $\Pi_*$ has the
form
$$T_{\beta,t}=\left[\begin{array}{ccc}1&-\bar{\beta}&\frac{-|\beta|^2+it}{2}\\0&1&\beta\\0&0&1\end{array}\right].$$
This map is in $PU(2,1;\mathbb{Z}[i\sqrt{d}])$ if and only if $|\beta|^2$ is an even integer and $t\in2\sqrt{d}\mathbb{Z}$ . Writing $\beta=m+i\sqrt{d}n$ for $m,n\in\mathbb{Z}$, then we can obtain $m\equiv0\ (mod\ 2)$ from the conditions $|\beta|^2=m^2+dn^2\in2\mathbb{Z}$ and  $d\equiv2(mod\ 4)$. Therefore, we conclude that $\Delta\subset Isom(\mathbb{Z}[i\sqrt{d}])$ is of index 2. Also, the kernel of $\Pi_*$ is generated by
$$\left[\begin{array}{ccc}1&0&i\sqrt{d}\\0&1&0\\0&0&1\end{array}\right],$$ which is a vertical translation of $(0,2\sqrt{2})$.

\medskip

(ii)\ {\bf The case $\mathcal{O}_d$ with $d\equiv3(mod\ 4)$}

Suppose that $\beta=m+n\frac{1+i\sqrt{d}}{2}\in\mathcal{O}_d$ with $m,n\in\mathbb{Z}$ for $d\equiv3(mod\ 4)$. By the same argument of (i), it only suffices to determine $m,n$ such that $|\beta|^2$ is an integer. For $d\equiv3(mod\ 4)$, it is easy to show that $|\beta|^2=m^2+mn+n^2(d+1)/4\in\mathbb{Z}$ for any $m,n\in\mathbb{Z}$, which implies that $\Delta=Isom(\mathcal{O}_d)$. Obviously, the kernel of  $\Pi_*$ is generated by a vertical translation of $(0,2\sqrt{d})$.

\end{proof}

\subsection{Fundamental domain for the stabiliser}
As the first step toward the construction of a fundamental domain for the action of $(\Gamma_d)_\infty$ on $\mathfrak{R}$ for $d=2,7,11$, we shall find the suitable generators of $Isom(\mathcal{O}_d)$ to construct a fundamental domain in $\mathbb{C}$.

In the proof of Proposition \ref{eq:3-1} we saw that $\Delta=\Pi_*((\Gamma_2)_\infty)$ is a subgroup of index 2 in $Isom(\mathcal{O}_2)$ consisting of elements of $GL(2,\mathcal{O}_2)$ of the form
$$\left\{\left[\begin{array}{cc}(-1)^j&m+i\sqrt{2}n\\0&1\end{array}\right]:j=0,1,m,n\in\mathbb{Z},m\equiv0(mod\ 2)\right\}.$$
A fundamental domain for this group is the triangle in  $\mathbb{C}$ with vertices at $-1+\sqrt{2}i/2$ and $1\pm\sqrt{2}i/2$; see (a) in Figure 3.1.
Side paring maps are given by
$$r^{(2)}_1=\left[\begin{array}{cc}-1&0\\0&1\end{array}\right],\ r^{(2)}_2=\left[\begin{array}{cc}-1&2\\0&1\end{array}\right],\
r^{(2)}_3=\left[\begin{array}{cc}-1&\sqrt{2}i\\0&1\end{array}\right].
$$
The first of these is a rotation of order 2 fixing origin, the second is a rotation of order 2 fixing $1/2$ and the third is a rotation of order 2 fixing $\sqrt{2}i/2$. Indeed every element of $\Delta=GL(2,\mathcal{O}_2)$ is generated by $r^{(2)}_1,r^{(2)}_2,r^{(2)}_3$ as follows
\begin{align*}\left[\begin{array}{cc}(-1)^j&2m+\sqrt{2}ni\\0&1\end{array}\right]&=\left[\begin{array}{cc}1&2\\0&1\end{array}\right]^m
\left[\begin{array}{cc}1&\sqrt{2}i\\0&1\end{array}\right]^n\left[\begin{array}{cc}-1&0\\0&1\end{array}\right]^j\\
&=\left(r^{(2)}_2r^{(2)}_1\right)^m\left(r^{(2)}_3r^{(2)}_1\right)^n\left(r^{(2)}_1\right)^j.
\end{align*}

As the same argument, a fundamental domain for $Isom(\mathcal{O}_d)$ with $d=7$ or $11$ is the triangle in  $\mathbb{C}$ with vertices at $(-1+i\sqrt{d})/4$, $(1-i\sqrt{d})/4$ and $(3+i\sqrt{d})/4$; see (b) in Figure 3.1. Side paring maps are given by
$$r^{(d)}_1=\left[\begin{array}{cc}-1&0\\0&1\end{array}\right],\ r^{(d)}_2=\left[\begin{array}{cc}-1&1\\0&1\end{array}\right],\
r^{(d)}_3=\left[\begin{array}{cc}-1&(1+i\sqrt{d})/2\\0&1\end{array}\right].
$$
All these maps are rotations by $\pi$ fixing $0,1/2$ and $(1+i\sqrt{d})/4$ respectively.

\begin{figure}
\centering
  \subfigure[]{
  \psfrag{r1}{\Small$r_1^{(2)}$}\psfrag{r2}{\Small$r_2^{(2)}$}
\psfrag{r3}{\Small$r_3^{(2)}$}
\psfrag{0}{\Small$0$}
\psfrag{1}{\Small$1$}\psfrag{2}{\Small$\frac{i\sqrt{2}}{2}$}
    \label{fig:mini:subfig:1}
 \begin{minipage}[1]{0.45\textwidth} \centering
      \includegraphics[width=1.9in]{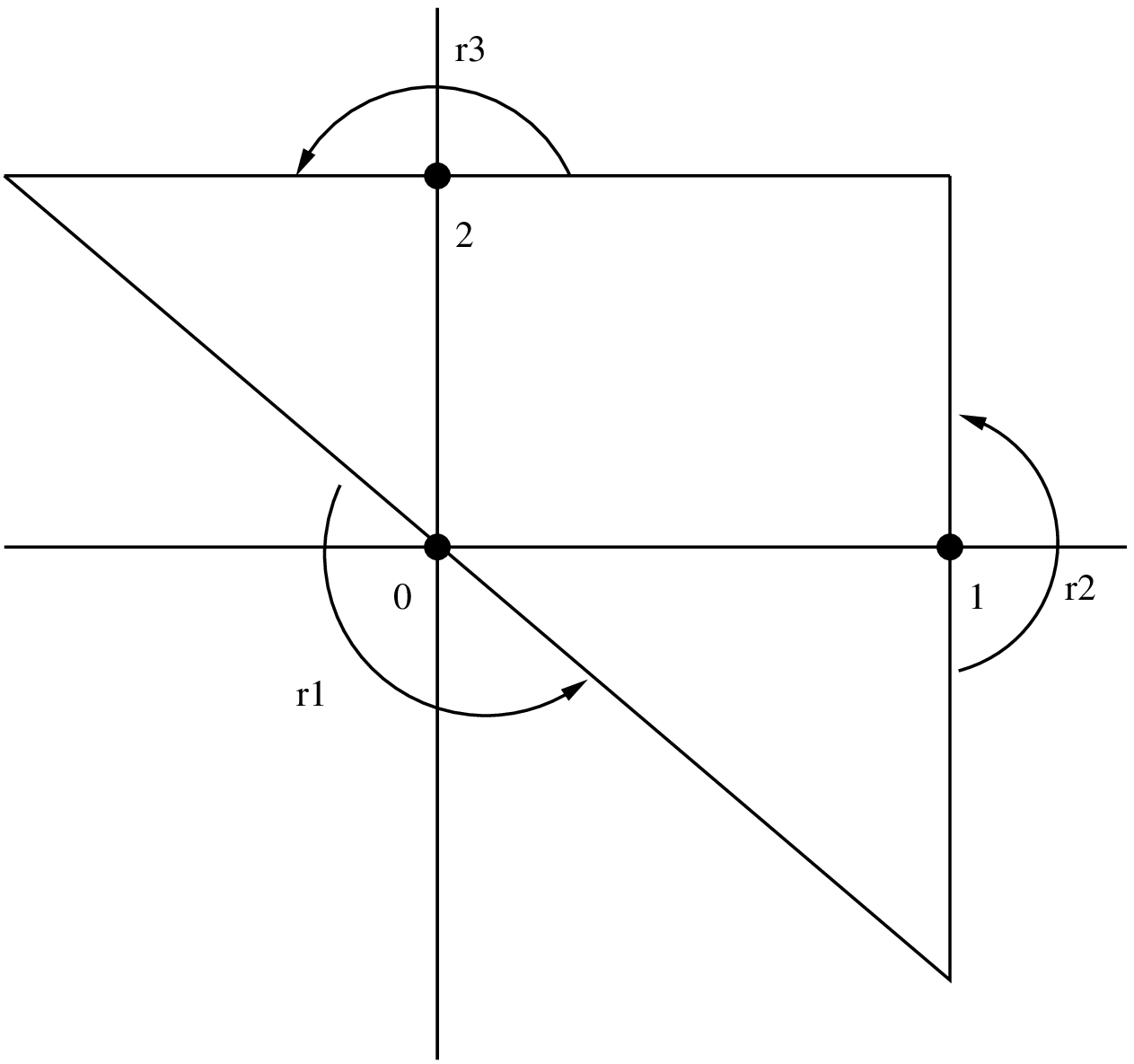}
  \end{minipage}
}%
  \subfigure[]{
  \psfrag{r1}{\Small$r_1^{(d)}$}\psfrag{r2}{\Small$r_2^{(d)}$}
\psfrag{r3}{\Small$r_3^{(d)}$}
\psfrag{0}{\Small$0$}\psfrag{wd}{\Small$\frac{\omega_d}{2}$}
\psfrag{1/2}{\Small$\frac{1}{2}$}
    \label{fig:mini:subfig:2} 
    \begin{minipage}[3]{0.45\textwidth}
      \centering
      \includegraphics[width=1.8in]{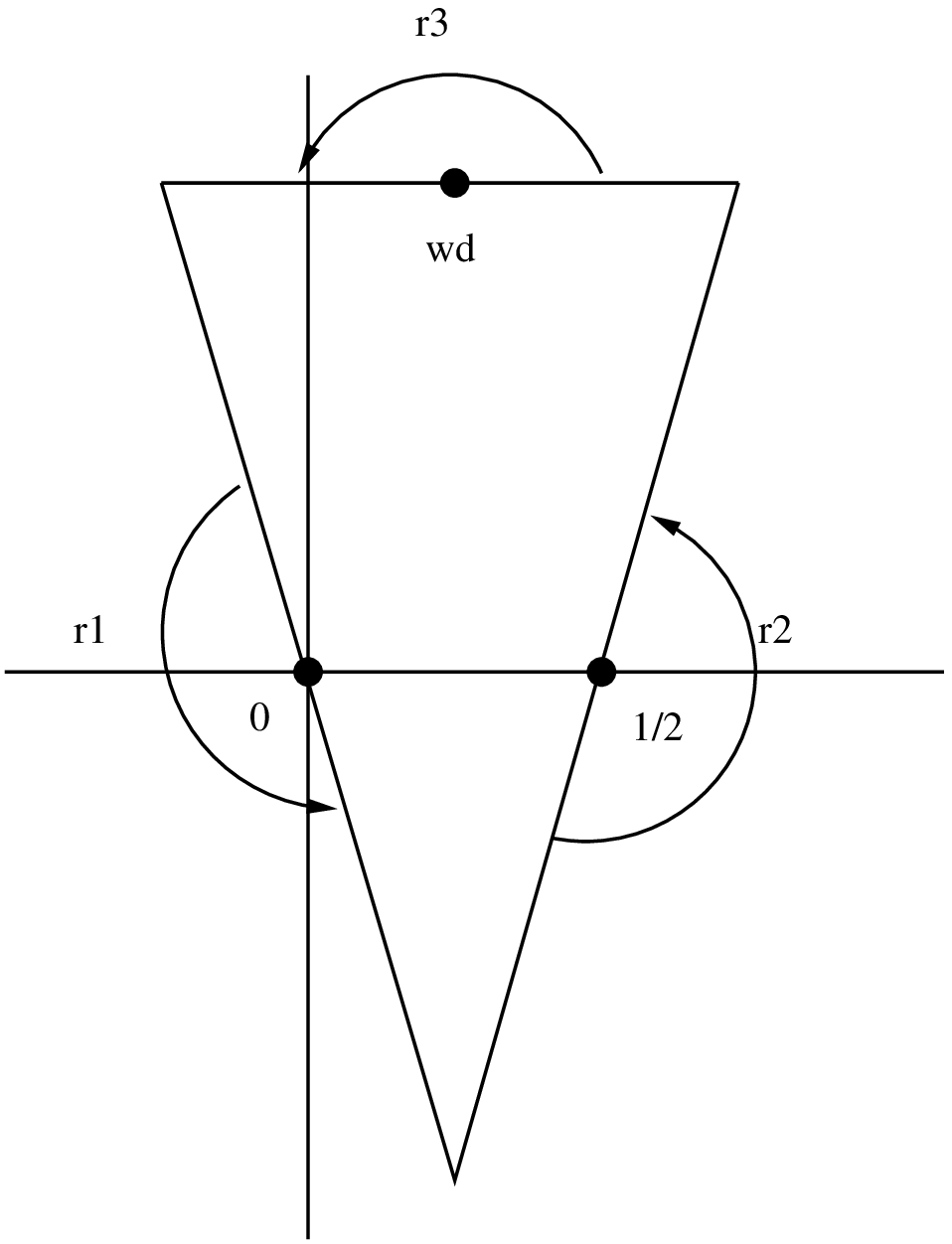}
    \end{minipage}
    }
\medskip
\makeatletter\def\@captype{figure}\makeatother
\bigskip
\caption{(a)\ Fundamental domain for a subgroup $\Delta$ of $Isom(\mathcal{O}_2)$ with index 2.
(b)\ Fundamental domain for $Isom(\mathcal{O}_d)$ with $d=7,11$. This is also true for all the values of $d$ with $d\equiv3(mod\ 4)$.}
  \label{fig:mini:subfig} 
\end{figure}

\medskip

In order to produce a fundamental domain for $(\Gamma_d)_\infty$ we look at all the preimages of
the triangle (that is a fundamental domain of $\Pi_*((\Gamma_d)_\infty)$)  under vertical projection
$\Pi$ and we intersect this with a fundamental domain for $ker(\Pi_*)$. The inverse of image of the
triangle under $\Pi$ is an infinite prism. The kernel of $\Pi_*$ is the infinite cyclic group
generated by $T$, the vertical translation by $(0,2\sqrt{d})$.

\begin{figure}
\centering
\psfrag{T}{\Large$T^{(2)}$}\psfrag{R1}{\Large$R_1^{(2)}$}\psfrag{R2}{\Large$R_2^{(2)}$}\psfrag{R3}{\Large$R_3^{(2)}$}
\psfrag{TR2}{\Large$T^{(2)}R_2^{(2)}$}\psfrag{TR3}{\Large$T^{(2)}R_3^{(2)}$}
\psfrag{z1+}{\Large$v_1^+$}\psfrag{z2+}{\Large$v_2^+$}\psfrag{z3+}{\Large$v_3^+$}\psfrag{z4+}{\Large$v_4^+$}
\psfrag{z5+}{\Large$v_5^+$}\psfrag{z6+}{\Large$v_6^+$}
\psfrag{z1-}{\Large$v_1^-$}\psfrag{z2-}{\Large$v_2^-$}\psfrag{z3-}{\Large$v_3^-$}\psfrag{z4-}{\Large$v_4^-$}\psfrag{z5-}{\Large$v_5^-$}
\psfrag{z6-}{\Large$v_6^-$}
\resizebox{2.5in}{!}{\includegraphics{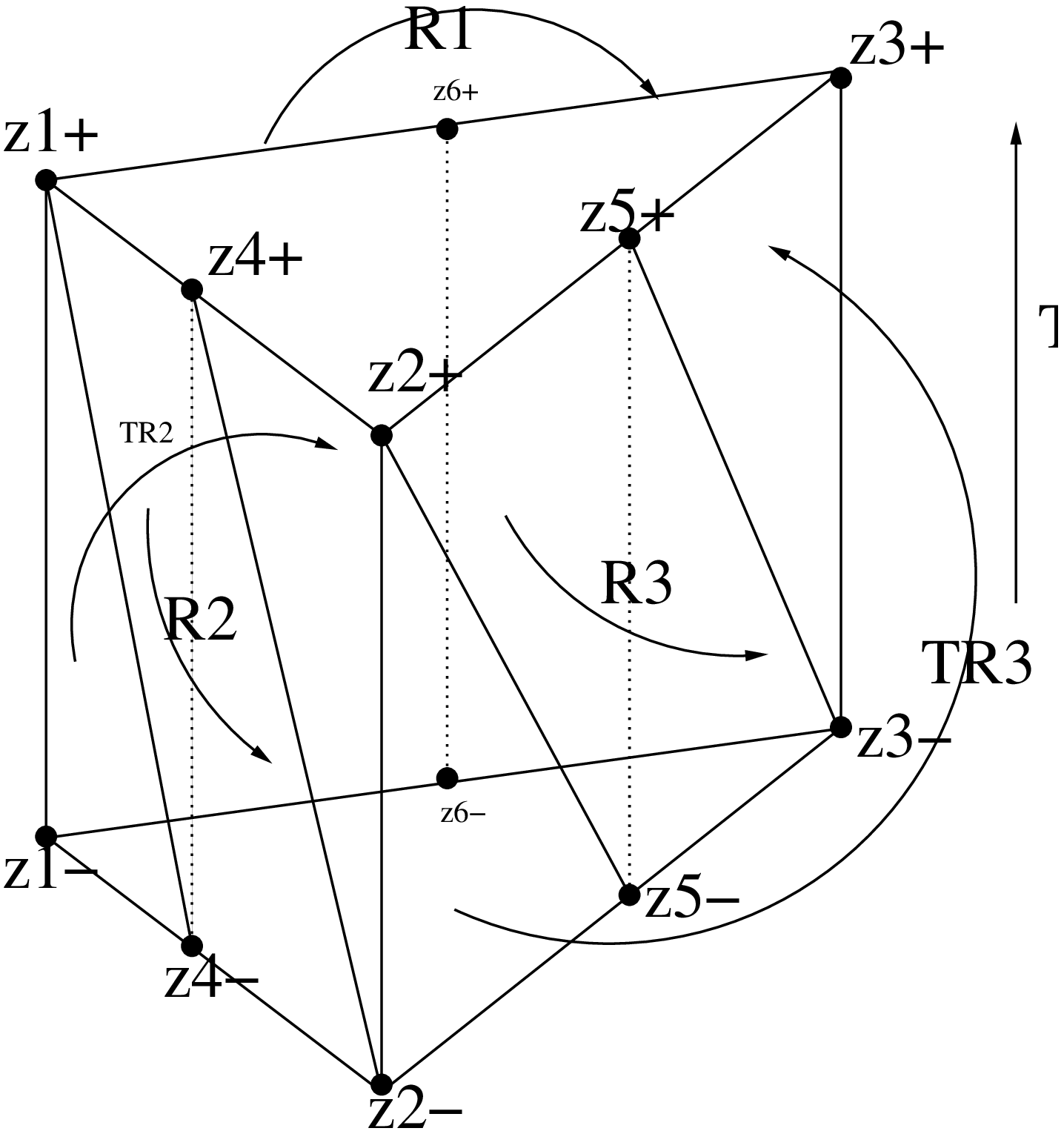}}
\medskip
\makeatletter\def\@captype{figure}\makeatother
\caption{A fundamental domain $\mathbf{\Sigma}_2$ for $(\Gamma_2)_\infty$ in the Heisenberg group: the map $R^{(2)}_1$
 rotates through $\pi$ about $\zeta=0$, the map $R^{(2)}_2$ is a Heisenberg rotation through $\pi$
  about $\zeta=1$ and the map $R^{(2)}_3$ is a Heisenberg rotation through $\pi$ about $\zeta=\sqrt{2}i/2$.}
\end{figure}

\medskip

\begin{proposition} $(\Gamma_2)_\infty$ is generated by $$ R^{(2)}_1=\left[\begin{array}{ccc}1&0&0\\0&-1&0\\0&0&1\end{array}\right],
 R^{(2)}_2=\left[\begin{array}{ccc}1&2&-2\\0&-1&2\\0&0&1\end{array}\right],
 R^{(2)}_3=\left[\begin{array}{ccc}1&-i\sqrt{2}&-1\\0&-1&i\sqrt{2}\\0&0&1\end{array}\right]$$
and $$T^{(2)}=\left[\begin{array}{ccc}1&0&i\sqrt{2}\\0&1&0\\0&0&1\end{array}\right].$$

A presentation is given by
$$(\Gamma_2)_{\infty}=\langle R^{(2)}_j,T^{(2)}|{R^{(2)}_j}^2=[T^{(2)},R^{(2)}_j]=
\left({T^{(2)}}^2R^{(2)}_1R^{(2)}_3R^{(2)}_2\right)^2=id\rangle.
$$
\end{proposition}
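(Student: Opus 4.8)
The plan is to exploit the exact sequence of Proposition~\ref{eq:3-1}, which in the case $d=2$ reads $0\to 2\sqrt2\,\Z\to(\Gamma_2)_\infty\xrightarrow{\Pi_*}\Delta\to1$, and to push the known generating set and presentation of $\Delta$ up through this sequence. First I would check directly that the four matrices $R^{(2)}_1,R^{(2)}_2,R^{(2)}_3,T^{(2)}$ lie in $(\Gamma_2)_\infty=PU(2,1;\mathcal O_2)\cap (\text{stab of }q_\infty)$: each is upper triangular with unit diagonal entries and $\mathcal O_2$-entries, and a short computation shows each preserves the Hermitian form. Next I would compute their images under $\Pi_*$ using the formula \eqref{eq:2-3}: one finds $\Pi_*(R^{(2)}_1)=r^{(2)}_1$, $\Pi_*(R^{(2)}_2)=r^{(2)}_2$, $\Pi_*(R^{(2)}_3)=r^{(2)}_3$ (up to the conventions of the side-pairing maps, with $R^{(2)}_3$ corresponding to translation by $i\sqrt2$ composed with the order-two rotation), and $\Pi_*(T^{(2)})=\mathrm{id}$. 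Since $r^{(2)}_1,r^{(2)}_2,r^{(2)}_3$ generate $\Delta$ (shown explicitly in the text before the proposition) and $T^{(2)}$ generates the kernel $2\sqrt2\,\Z$ of $\Pi_*$, a standard extension argument gives that $R^{(2)}_1,R^{(2)}_2,R^{(2)}_3,T^{(2)}$ generate $(\Gamma_2)_\infty$.

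For the generation step the key point to verify carefully is that $T^{(2)}$ really generates the \emph{whole} kernel, i.e. that no shorter vertical translation lies in $(\Gamma_2)_\infty$; this is exactly the content of the kernel computation in case (i) of the proof of Proposition~\ref{eq:3-1}, so I would just cite it. Then the argument is: given $g\in(\Gamma_2)_\infty$, write $\Pi_*(g)$ as a word $w$ in the $r^{(2)}_j$, lift that word to the corresponding word $\tilde w$ in the $R^{(2)}_j$; then $g\tilde w^{-1}\in\ker\Pi_*=\langle T^{(2)}\rangle$, so $g$ is a word in the four generators.

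For the presentation I would use the standard recipe for a presentation of a group extension $1\to K\to E\to Q\to1$: take generators $=$ (lifts of generators of $Q$) $\cup$ (generators of $K$); relations $=$ (lifted relations of $Q$, each expressing a relator of $Q$ as an element of $K$ written in the $K$-generators) $\cup$ (relations of $K$) $\cup$ (conjugation relations describing the action of $Q$ on $K$). Here $Q=\Delta$ has presentation with generators $r^{(2)}_1,r^{(2)}_2,r^{(2)}_3$ and relations ${r^{(2)}_j}^2=1$ (the three side-pairings are order-two rotations) together with the commuting relations between the two translations $r^{(2)}_2r^{(2)}_1$ and $r^{(2)}_3r^{(2)}_1$ coming from the abelian lattice; the fundamental-domain/side-pairing combinatorics (Poincar\'e's theorem applied to the triangle in Figure~3.1(a)) is what justifies this being a complete presentation. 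The kernel $K=\langle T^{(2)}\rangle\cong\Z$ has no relations. Since $T^{(2)}$ is a vertical translation it is central in $Isom(\mathfrak R)$, so the conjugation relations are simply $[T^{(2)},R^{(2)}_j]=\mathrm{id}$. It remains to identify the lifted relations: ${R^{(2)}_j}^2$ must be a vertical translation, and a direct matrix computation shows ${R^{(2)}_1}^2={R^{(2)}_2}^2={R^{(2)}_3}^2=\mathrm{id}$, while the lattice-commuting relation among the translations lifts to a nontrivial element of $\langle T^{(2)}\rangle$: computing the product $R^{(2)}_1R^{(2)}_3R^{(2)}_2$ one gets a vertical translation whose square is ${T^{(2)}}^{-2}$ (the failure of the Heisenberg translations to commute on the nose is measured by a vertical translation), which rearranges to the stated relation $\big({T^{(2)}}^2R^{(2)}_1R^{(2)}_3R^{(2)}_2\big)^2=\mathrm{id}$. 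Finally one checks these relations suffice by a counting/normal-form argument: any word can be moved, using the relations, to the form ${T^{(2)}}^kR^{(2)}_{i_1}\cdots R^{(2)}_{i_\ell}$ with the tail a normal form for an element of $\Delta$, matching the extension structure.

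The main obstacle I expect is the bookkeeping in the last step: correctly computing the three matrix products ${R^{(2)}_j}^2$ and $R^{(2)}_1R^{(2)}_3R^{(2)}_2$, tracking the Heisenberg-cocycle factor (the $2i\,\Im m(\cdot)$ term) that produces the vertical-translation discrepancy, and then verifying that the combinatorics of the prism $\mathbf\Sigma_2$ (Figure~3.2) — its side-pairings $R^{(2)}_j$, $T^{(2)}R^{(2)}_2$, $T^{(2)}R^{(2)}_3$ and the cycle relations at its edges — yields exactly these relations and no others, via Poincar\'e's polyhedron theorem. Getting the exact exponent on $T^{(2)}$ in the last relator right (here $2$) is the delicate numerical point, and it is dictated by the $d=2$ normalization $T^{(2)}=(0,2\sqrt2)$ together with the index-$2$ condition $m\equiv0\ (\mathrm{mod}\ 2)$ from Proposition~\ref{eq:3-1}.
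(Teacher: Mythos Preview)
Your strategy---generation via the short exact sequence of Proposition~\ref{eq:3-1} and presentation via the standard recipe for a central extension $1\to\Z\to(\Gamma_2)_\infty\to\Delta\to1$---is correct and is a genuinely more algebraic route than the paper's. The paper instead builds the three-dimensional prism $\mathbf\Sigma_2$ in the Heisenberg group directly, writes down explicit side-pairings $R^{(2)}_1,\,R^{(2)}_2,\,T^{(2)}R^{(2)}_2,\,R^{(2)}_3,\,T^{(2)}R^{(2)}_3,\,T^{(2)}$, and reads off the presentation from the edge cycles (Poincar\'e's theorem applied once, in dimension three). Your approach factors this through the two-dimensional problem: Poincar\'e on the triangle gives $\Delta=\langle r_j\mid r_j^2=(r_1r_3r_2)^2=1\rangle$, and then the extension recipe adds $T^{(2)}$, the centrality relations, and the lifted relators. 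The advantage of your route is that it makes transparent \emph{why} the presentation has exactly this shape; the paper's advantage is that one never needs to check that the triangle presentation of $\Delta$ is complete separately.

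One correction to your computation: $R^{(2)}_1R^{(2)}_3R^{(2)}_2$ is \emph{not} a vertical translation---its projection $r^{(2)}_1r^{(2)}_3r^{(2)}_2$ is an order-two rotation of $\C$, not the identity. The relator of $\Delta$ is $(r_1r_3r_2)^2$, so it is $(R^{(2)}_1R^{(2)}_3R^{(2)}_2)^2$ that lies in the kernel. A direct Heisenberg computation gives
\[
\bigl(R^{(2)}_1R^{(2)}_3R^{(2)}_2\bigr)^2:(\zeta,t)\longmapsto(\zeta,\,t-8\sqrt2)=\bigl(T^{(2)}\bigr)^{-4}(\zeta,t),
\]
i.e.\ the lifted relation is $(R^{(2)}_1R^{(2)}_3R^{(2)}_2)^2={T^{(2)}}^{-4}$, not ${T^{(2)}}^{-2}$. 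Using centrality this rearranges exactly to $\bigl({T^{(2)}}^{2}R^{(2)}_1R^{(2)}_3R^{(2)}_2\bigr)^2=\mathrm{id}$, matching the statement. So your flagged ``delicate numerical point'' does need fixing, but once corrected the extension argument goes through cleanly.
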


\begin{proof}
Those matrices are constructed by lifting generators of the subgroup $\Delta\subset Isom(\mathcal{O}_2)$ with index 2
and also $T^{(2)}$ is a generator of the kernel of the map $\Pi_*$. A fundamental domain can be constructed with
side pairings as Figure 3.2, where the vertices of the prism are
$v_3^+=(-1+\sqrt{2}i/2,\sqrt{2}),$ $v_2^+=(1+\sqrt{2}i/2,\sqrt{2}),$ $v_1^+=(1-\sqrt{2}i/2,\sqrt{2})$ for the upper cap of the prism
and $v_3^-=(-1+\sqrt{2}i/2,-\sqrt{2}),$ $v_2^-=(1+\sqrt{2}i/2,-\sqrt{2}),$ $v_1^-=(1-\sqrt{2}i/2,-\sqrt{2})$ for the base. In particular, the points
$v_4^{\pm},$ $v_5^{\pm},$ $v_6^{\pm}$ are the middle points of the edges $(v_1^{\pm},v_2^{\pm}),$ $(v_2^{\pm},v_3^{\pm})$ and $(v_3^{\pm},v_1^{\pm})$, respectively.

The actions of side-pairing maps on $\mathfrak{R}$ are given by
\begin{align*}
R^{(2)}_1(\zeta,t)&=(-\zeta,t),\\
R^{(2)}_2(\zeta,t)&=(-\zeta+2,t+4\Im m{\zeta}),\\
R^{(2)}_3(\zeta,t)&=(-\zeta+i\sqrt{2},t-2\sqrt{2}\Re e{\zeta}),\\
T^{(2)}(\zeta,t)&=(\zeta,t+2\sqrt{2}).
\end{align*}
We describe the side pairing in terms of the action on the vertice:
\begin{eqnarray*}
R^{(2)}_1&:&(v_6^+,v_1^+,v_1^-,v_6^-)\longrightarrow(v_6^+,v_3^+,v_3^-,v_6^-),\\
R^{(2)}_2&:&(v_1^+,v_4^+,v_4^-)\longrightarrow(v_2^-,v_4^+,v_4^-),\\
T^{(2)}R^{(2)}_2&:&(v_1^+,v_1^-,v_4^-)\longrightarrow(v_2^+,v_2^-,v_4^+),\\
R^{(2)}_3&:&(v_2^+,v_5^+,v_5^-)\longrightarrow(v_3^-,v_5^+,v_5^-),\\
T^{(2)}R^{(2)}_3&:&(v_2^+,v_2^-,v_5^-)\longrightarrow(v_3^+,v_3^+,v_5^+),\\
T^{(2)}&:&(v_1^-,v_4^-,v_2^-,v_5^-,v_3^-,v_6^-)\longrightarrow(v_1^+,v_4^+,v_2^+,v_5^+,v_3^+,v_6^+).
\end{eqnarray*}
The presentation can be obtained following from the edge cycles of the fundamental domain.
\end{proof}

\medskip

\begin{proposition} $(\Gamma_7)_\infty$ is generated by $$ R^{(7)}_1=\left[\begin{array}{ccc}1&0&0\\0&-1&0\\0&0&1\end{array}\right],
 R^{(7)}_2=\left[\begin{array}{ccc}1&1&-\bar{\omega}_7\\0&-1&1\\0&0&1\end{array}\right],
 R^{(7)}_3=\left[\begin{array}{ccc}1&\bar{\omega}_7&-1\\0&-1&\omega_7\\0&0&1\end{array}\right]$$
and $$T^{(7)}=\left[\begin{array}{ccc}1&0&i\sqrt{7}\\0&1&0\\0&0&1\end{array}\right].$$

A presentation is given by
\begin{align*}(\Gamma_7)_{\infty}&=\langle R^{(7)}_j,T^{(7)}|{R^{(7)}_1}^2={R^{(7)}_3}^2=[T^{(7)},R^{(7)}_1]=[T^{(7)},R^{(7)}_3]\\
&\hspace{4cm}=T^{(7)}{R^{(7)}_2}^{-2}=\left(R^{(7)}_1R^{(7)}_3R^{(7)}_2\right)^2=id\rangle.
\end{align*}
\end{proposition}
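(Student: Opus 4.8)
The plan is to follow the same strategy used for $(\Gamma_2)_\infty$ in the previous proposition. First I would verify that the four listed matrices actually lie in $(\Gamma_7)_\infty$: each is upper triangular with unit diagonal entries $\pm 1$, and one checks directly that they preserve the Hermitian form $\langle\cdot,\cdot\rangle$ and have entries in $\mathcal{O}_7=\mathbb{Z}[\omega_7]$; in particular the $(1,3)$-entries $-\bar\omega_7$ and $-1$ are chosen precisely so that the unitarity relation $2\Re e(a\bar c)+|b|^2=0$ holds (here $|{\omega_7}|^2={\omega_7}+\bar\omega_7 = 1 + (d+1)/4\cdot 0\ldots$, i.e. $|\omega_7|^2 = 2$, since $\omega_7\bar\omega_7=(1+7)/4=2$). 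Then I would note that $T^{(7)}$ generates $\ker(\Pi_*)$ by Proposition~\ref{eq:3-1}(ii) (the vertical translation by $(0,2\sqrt{7})$), while $R^{(7)}_1,R^{(7)}_2,R^{(7)}_3$ project under $\Pi_*$ to the side-pairing maps $r^{(7)}_1,r^{(7)}_2,r^{(7)}_3$ of the triangle with vertices $(-1+i\sqrt7)/4$, $(1-i\sqrt7)/4$, $(3+i\sqrt7)/4$, which generate $Isom(\mathcal{O}_7)=\Delta$ as shown in the text. Combining a generating set for the kernel with lifts of a generating set for the quotient gives a generating set for the extension, so these four matrices generate $(\Gamma_7)_\infty$.

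For the presentation I would build the explicit fundamental domain $\mathbf{\Sigma}_7$ in $\mathfrak{R}$ as an infinite prism over the triangle, truncated by a fundamental domain for $\ker(\Pi_*)=\langle T^{(7)}\rangle$, exactly as in Figure 3.2. Writing down the actions on $\mathfrak{R}$,
\begin{align*}
R^{(7)}_1(\zeta,t)&=(-\zeta,t),\\
R^{(7)}_2(\zeta,t)&=(-\zeta+1,t+2\Im m\zeta),\\
R^{(7)}_3(\zeta,t)&=(-\zeta+\bar\omega_7,t+2\Im m(\bar\omega_7\bar\zeta)),\\
T^{(7)}(\zeta,t)&=(\zeta,t+2\sqrt7),
\end{align*}
I would identify the side-pairing maps of the prism (the three "lateral" pairings coming from $R^{(7)}_j$ and $T^{(7)}R^{(7)}_j$, together with the top-bottom pairing by $T^{(7)}$), track their action on the vertices and edge midpoints of the two triangular caps, and read off the edge cycles. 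Each edge cycle yields a relation via the Poincaré polyhedron theorem; I expect the cycles to produce exactly the relations ${R^{(7)}_1}^2={R^{(7)}_3}^2=id$ (order-two rotations about $\zeta=0$ and $\zeta=(1+i\sqrt7)/4$), the commuting relations $[T^{(7)},R^{(7)}_1]=[T^{(7)},R^{(7)}_3]=id$, the relation $T^{(7)}{R^{(7)}_2}^{-2}=id$ (so $R^{(7)}_2$ has infinite order and its square is the vertical translation — this is the new feature, contrasting with the $d=2$ case where all three are involutions), and the order-two cycle giving $(R^{(7)}_1R^{(7)}_3R^{(7)}_2)^2=id$.

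The main obstacle, as in the $\Gamma_2$ case, is the combinatorial bookkeeping: one must check that the proposed polyhedron $\mathbf{\Sigma}_7$ is genuinely a fundamental domain — i.e. that the side-pairing maps satisfy the hypotheses of the Poincaré polyhedron theorem (tessellation around each ridge, the cycle conditions closing up with the stated orders) — and that the relations obtained generate \emph{all} relations, with no further hidden identifications among the faces. The asymmetry introduced by $R^{(7)}_2$ having $({R^{(7)}_2})^2=T^{(7)}$ rather than being an involution changes which faces of the prism are paired to which, so the vertex/edge correspondence lists differ from Figure 3.2 and must be recomputed carefully; verifying that the dihedral angles sum correctly around each edge cycle (in particular that the cycle $R^{(7)}_1R^{(7)}_3R^{(7)}_2$ has order exactly $2$) is the delicate point. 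Once the fundamental-domain verification is complete, the presentation follows mechanically from the edge cycles, just as asserted at the end of the previous proof.
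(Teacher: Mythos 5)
Your overall strategy is exactly the paper's: lift the side-pairing generators $r^{(7)}_j$ of $Isom(\mathcal{O}_7)$ together with the kernel generator $T^{(7)}$, take as fundamental domain the prism over the triangle truncated by a fundamental domain for $\ker(\Pi_*)=\langle T^{(7)}\rangle$, and read the presentation off the edge cycles. You also correctly identify the structural novelty, namely that $R^{(7)}_2$ is a screw motion with $\bigl(R^{(7)}_2\bigr)^2=T^{(7)}$ rather than an involution.

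However, the explicit Heisenberg actions you wrote down are wrong, and in a way that is internally inconsistent with the relations you intend to derive. From the matrix of $R^{(7)}_2$, the $(1,3)$-entry $-\bar\omega_7=(-1+i\sqrt7)/2$ forces the vertical parameter $t_0=\sqrt7$ in the general form \eqref{eq:2-3}, so the action is $R^{(7)}_2(\zeta,t)=(-\zeta+1,\,t+2\Im m\zeta+\sqrt7)$; your formula omits the $+\sqrt7$, which makes $R^{(7)}_2$ an involution and would yield the relation ${R^{(7)}_2}^2=id$ from the edge cycles instead of $T^{(7)}{R^{(7)}_2}^{-2}=id$. Likewise the $(2,3)$-entry of $R^{(7)}_3$ is $\omega_7$, so it is a Heisenberg rotation by $\pi$ about $\omega_7/2=(1+i\sqrt7)/4$ with action $(-\zeta+\omega_7,\,t+2\Im m(\bar\omega_7\zeta))$, not a map centred at $\bar\omega_7/2$ with twist $2\Im m(\bar\omega_7\bar\zeta)$. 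Since the entire derivation of the presentation consists of tracking these actions on the vertices and edges of $\mathbf{\Sigma}_7$, the side pairings and edge cycles computed from your formulas would come out wrong; the formulas must be corrected before the combinatorial bookkeeping you describe can be carried out. Beyond that, both you and the paper leave the actual verification of the Poincar\'e hypotheses at the level of assertion, so once the actions are fixed your plan does reproduce the paper's argument.
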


\begin{proof}
Those matrices are constructed by lifting generators of $Isom(\mathcal{O}_7)$ and also $T^{(7)}$ is a generator of the kernel of the map $\Pi_*$.
A fundamental domain can be constructed with
side pairings as Figure 3.3, where the vertices of the prism are
$v_1^+=((1-i\sqrt{7})/4,\sqrt{7}),$ $v_2^+=((3+i\sqrt{7})/4,\sqrt{7}),$ $v_4^+=((-1+i\sqrt{7})/4,\sqrt{7})$ for the upper cap of the prism
and $v_1^-=((1-i\sqrt{7})/4,-\sqrt{7}),$ $v_2^-=((3+i\sqrt{7})/4,-\sqrt{7}),$ $v_4^-=((-1+i\sqrt{7})/4,-\sqrt{7})$ for the base.
The points $v_3^{\pm},v_5^{\pm}$ are the middle points of the edges $(v_2^{\pm},v^{\pm}_4)$ and $(v_4^{\pm},v^{\pm}_1)$. In particular,
we introduce more three points $w_1^+=((1-i\sqrt{7})/4,\sqrt{7}/2),$ $w_2^-=((3+i\sqrt{7})/4,-\sqrt{7}/2)$ and $w_3^+=((-1+i\sqrt{7})/4,\sqrt{7}/2)$.
The actions of side-pairing maps on $\mathfrak{R}$ are given by
\begin{align*}
R^{(7)}_1(\zeta,t)&=(-\zeta,t),\\
R^{(7)}_2(\zeta,t)&=(-\zeta+1,t+2\Im m{\zeta}+\sqrt{7}),\\
R^{(7)}_3(\zeta,t)&=(-\zeta+\omega_7,t+2\Im m{(\bar{\omega}_7\zeta})),\\
T^{(7)}(\zeta,t)&=(\zeta,t+2\sqrt{7}).
\end{align*}
We describe the side pairing in terms of the action on the vertice:
\begin{eqnarray*}
R^{(7)}_1&:&(v_5^+,v_1^+,v_1^-,v_5^-)\longrightarrow(v_5^+,v_4^+,v_4^-,v_5^-),\\
R^{(7)}_2&:&(v_1^-,v_2^-,w_1^-,w_1^+)\longrightarrow(w_1^-,w_1^+,v_1^+,v_2^+),\\
R^{(7)}_3&:&(v_2^+,w_1^-,v_3^-,v_3^+)\longrightarrow(w_2^+,v_4^-,v_3^-,v^+_3),\\
T^{(7)}R^{(7)}_3&:&(w_1^-,v_2^-,v_3^-)\longrightarrow(v_4^+,w_2^+,v_3^+),\\
T^{(7)}&:&(v_1^-,v_2^-,v_3^-,v_4^-,v_5^-)\longrightarrow(v_1^+,v_2^+,v_3^+,v_4^+,v_5^+).
\end{eqnarray*}
The presentation can be obtained following from the edge cycles of the fundamental domain.
\end{proof}

\begin{figure}
\centering
\psfrag{T}{\Large$T^{(7)}$}\psfrag{R1}{\Large$R_1^{(7)}$}\psfrag{R2}{\Large$R_2^{(7)}$}\psfrag{R3}{\Large$R_3^{(7)}$}
\psfrag{TR3}{\Large$T^{(7)}R_3^{(7)}$}
\psfrag{z1+}{\Large$v_1^+$}\psfrag{z2+}{\Large$v_2^+$}\psfrag{z3+}{\Large$v_3^+$}\psfrag{z4+}{\Large$v_4^+$}
\psfrag{z5+}{\Large$v_5^+$}
\psfrag{z1-}{\Large$v_1^-$}\psfrag{z2-}{\Large$v_2^-$}\psfrag{z3-}{\Large$v_3^-$}
\psfrag{z4-}{\Large$v_4^-$}\psfrag{z5-}{\Large$v_5^-$}
\psfrag{w1+}{\Large$w_1^+$}\psfrag{w2+}{\Large$w_2^+$}\psfrag{w1-}{\Large$w_1^-$}
\resizebox{2.5in}{!}{\includegraphics{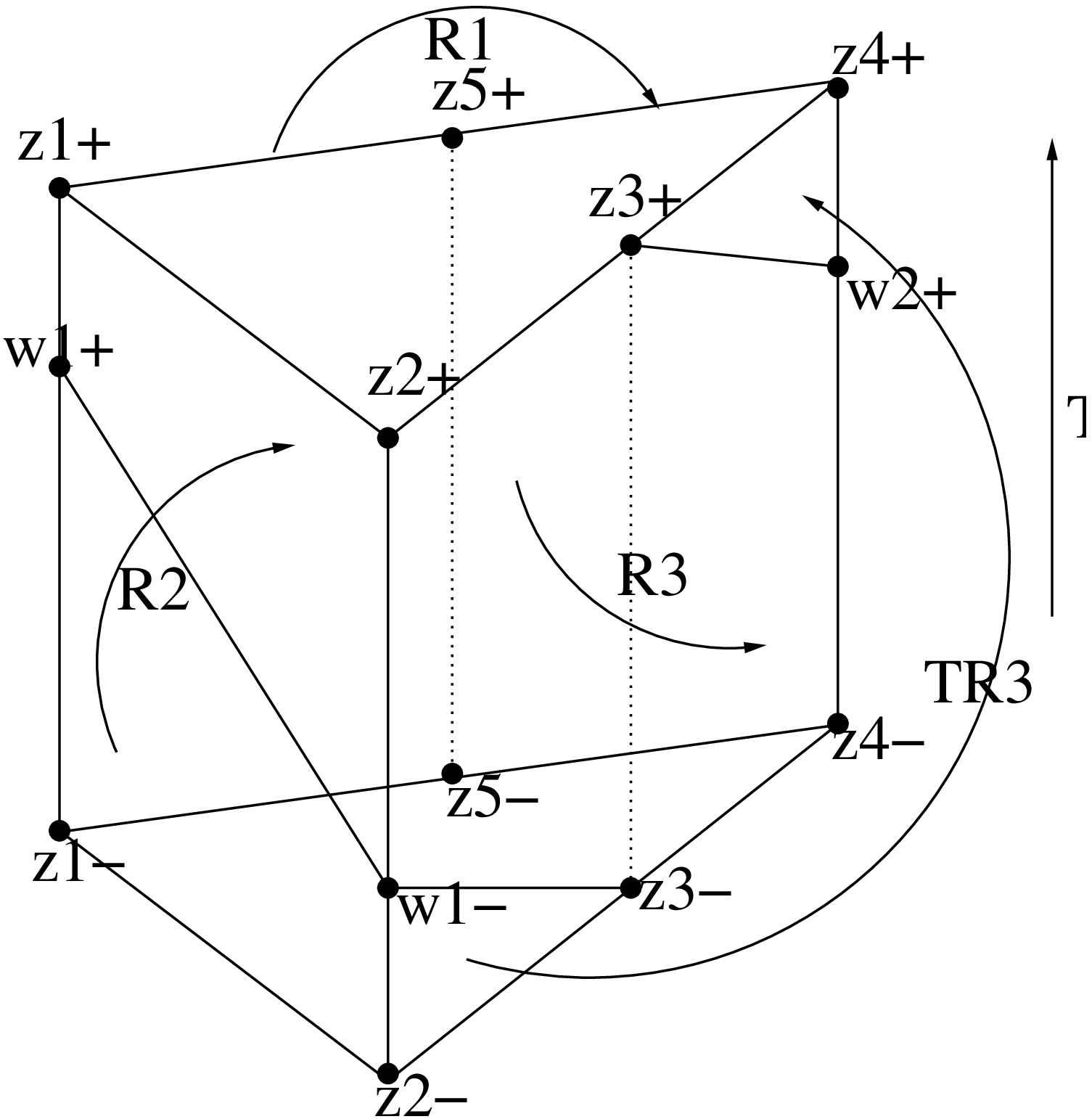}}
\medskip
\makeatletter\def\@captype{figure}\makeatother
\caption{A fundamental domain $\mathbf{\Sigma}_7$ for $(\Gamma_7)_\infty$ in the Heisenberg group: the map $R^{(7)}_1$ rotates through $\pi$ about $\zeta=0$,
the action of parabolic $R_2^{(7)}$ is a Heisenberg rotation through $\pi$ about $\zeta=1/2$ followed by an upward vertical
translation by $\sqrt{7}$ and the map $R_3^{(7)}$ is a Heisenberg rotation through $\pi$ about $\zeta=(1+i\sqrt{7})/4$.}
\end{figure}

\medskip

\begin{proposition} $(\Gamma_{11})_\infty$ is generated by $$ R^{(11)}_1=\left[\begin{array}{ccc}1&0&0\\0&-1&0\\0&0&1\end{array}\right],
 R^{(11)}_2=\left[\begin{array}{ccc}1&1&-\bar{\omega}_{11}\\0&-1&1\\0&0&1\end{array}\right],
 $$
$$R^{(11)}_3=\left[\begin{array}{ccc}1&\bar{\omega}_{11}&-1-\bar{\omega}_{11}\\0&-1&\omega_{11}\\0&0&1\end{array}\right]\quad \text{and}\quad T^{(11)}=\left[\begin{array}{ccc}1&0&i\sqrt{11}\\0&1&0\\0&0&1\end{array}\right].$$

A presentation is given by
\begin{align*}(\Gamma_{11})_{\infty}&=\langle R^{(11)}_j,T^{(11)}|{R^{(11)}_1}^2=[T^{(11)},R^{(11)}_1]=T^{(11)}{R^{(11)}_2}^{-2}\\
&\hspace{2cm}=T^{(11)}{R^{(11)}_3}^{-2}=T^{(11)}\left(R^{(11)}_1R^{(11)}_3R^{(11)}_2\right)^{-2}=id\rangle.
\end{align*}
\end{proposition}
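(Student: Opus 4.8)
The plan is to mimic the proofs of the two preceding propositions: build an explicit fundamental domain $\mathbf{\Sigma}_{11}\subset\mathfrak{R}$ for $(\Gamma_{11})_\infty$, identify its side pairings, and recover the presentation from Poincar\'e's polyhedron theorem via the edge cycles. The one new feature for $d=11$ is that \emph{both} parabolic generators $R_2^{(11)}$ and $R_3^{(11)}$ are screw motions: each is a Heisenberg $\pi$-rotation followed by an upward vertical translation by $\sqrt{11}$, which is exactly half the length $2\sqrt{11}$ of the generator $T^{(11)}$ of $\ker\Pi_*$. That $R_2$ screws already happens for every $d\equiv 3\pmod 4$; that $R_3$ must screw here is forced because the lift of $r_3^{(11)}$ via \eqref{eq:2-3} has $(1,3)$-entry $(-|\omega_{11}|^2+it_0)/2$ with $|\omega_{11}|^2=3$ odd, so $t_0=0$ produces no matrix over $\mathcal{O}_{11}$ whereas $t_0=\sqrt{11}$ gives the entry $-1-\bar\omega_{11}$ recorded in the statement.

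First I would check that the four displayed matrices are unitary for $\langle\cdot,\cdot\rangle$ with entries in $\mathcal{O}_{11}$ — a routine computation using $\omega_{11}+\bar\omega_{11}=1$ and $\omega_{11}\bar\omega_{11}=3$ — and that $R_1^{(11)},R_2^{(11)},R_3^{(11)}$ are the lifts of the side-pairing rotations $r_1^{(11)},r_2^{(11)},r_3^{(11)}$ of the triangular fundamental domain for $Isom(\mathcal{O}_{11})$ (which is all of $\Delta=\Pi_*((\Gamma_{11})_\infty)$ by Proposition~\ref{eq:3-1}), while $T^{(11)}$ generates $\ker\Pi_*$. Their Heisenberg actions are
\[
R_1^{(11)}(\zeta,t)=(-\zeta,t),\qquad R_2^{(11)}(\zeta,t)=(-\zeta+1,\,t+2\Im m\,\zeta+\sqrt{11}),
\]
\[
R_3^{(11)}(\zeta,t)=(-\zeta+\omega_{11},\,t+2\Im m(\bar\omega_{11}\zeta)+\sqrt{11}),\qquad T^{(11)}(\zeta,t)=(\zeta,t+2\sqrt{11}).
\]
For $\mathbf{\Sigma}_{11}$ I would take the infinite prism over the triangle with vertices $(1-i\sqrt{11})/4$, $(3+i\sqrt{11})/4$, $(-1+i\sqrt{11})/4$, truncated by $T^{(11)}$ to the slab $-\sqrt{11}\le t\le\sqrt{11}$. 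This solid has two triangular caps and three vertical faces, over the three sides of the triangle whose midpoints are $0$, $1/2$ and $(1+i\sqrt{11})/4$. The face over $\zeta=0$ is paired to itself with its two halves interchanged by $R_1^{(11)}$; the face over $\zeta=1/2$ is paired to itself by $R_2^{(11)}$, whose $\sqrt{11}$-screw carries its lower half onto its upper half; likewise the face over $\zeta=(1+i\sqrt{11})/4$ is paired to itself by $R_3^{(11)}$; and $T^{(11)}$ pairs the two caps. As in the picture of $\mathbf{\Sigma}_7$ (Figure~3.3), the screwed faces must be subdivided at mid-height $t=0$ by auxiliary points so that these maps glue the boundary cell by cell, after which I would verify the identifications on the resulting vertex list.

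Finally, Poincar\'e's polyhedron theorem yields that the side pairings generate $(\Gamma_{11})_\infty$ and that a complete set of relations is given by the pairing relations of the self-paired faces together with the edge cycles. The self-paired faces contribute ${R_1^{(11)}}^2=id$, ${R_2^{(11)}}^2=T^{(11)}$ and ${R_3^{(11)}}^2=T^{(11)}$ (the last two because the screw has half the period of $T^{(11)}$). The cycle along the vertical edge over $\zeta=0$ gives $[T^{(11)},R_1^{(11)}]=id$, and the cycle running around the horizontal edges of the caps gives $\bigl(R_1^{(11)}R_3^{(11)}R_2^{(11)}\bigr)^2=T^{(11)}$; this is consistent because $R_1^{(11)}R_3^{(11)}R_2^{(11)}$ equals the map $\zeta\mapsto -\zeta+(1-\omega_{11})$, a Heisenberg $\pi$-rotation with vertical parameter $\sqrt{11}$, whose square is precisely the vertical translation by $2\sqrt{11}$, i.e.\ $T^{(11)}$. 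Rewriting the last three relations as $T^{(11)}{R_2^{(11)}}^{-2}=T^{(11)}{R_3^{(11)}}^{-2}=T^{(11)}\bigl(R_1^{(11)}R_3^{(11)}R_2^{(11)}\bigr)^{-2}=id$ gives exactly the stated presentation (the relations $[T^{(11)},R_2^{(11)}]=[T^{(11)},R_3^{(11)}]=id$ are then redundant, each being a consequence of ${R_j^{(11)}}^2=T^{(11)}$). The main obstacle is the combinatorial bookkeeping hidden in the last two paragraphs: pinning down the mid-height subdivision of the two screwed faces — in particular deciding whether the composites $T^{(11)}R_2^{(11)}$ and $T^{(11)}R_3^{(11)}$ are also needed as side pairings, as happened for $d=7$ — and then checking that every edge cycle closes with precisely the asserted power, especially that the cap-edge cycle contributes a full period $2\sqrt{11}$ rather than $0$ or $4\sqrt{11}$. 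Everything else is the same computation already carried out for $d=2$ and $d=7$.
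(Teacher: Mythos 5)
Your outline follows the same strategy as the paper's proof: lift the side pairings $r_j^{(11)}$ of the triangular fundamental domain and a generator of $\ker\Pi_*$, build a prism in $\mathfrak{R}$, and read the presentation off the side pairings and edge cycles via Poincar\'e. Your computations of the relations are correct and agree with the paper's list (${R_2^{(11)}}^2={R_3^{(11)}}^2=\bigl(R_1^{(11)}R_3^{(11)}R_2^{(11)}\bigr)^2=T^{(11)}$, ${R_1^{(11)}}^2=[T^{(11)},R_1^{(11)}]=id$), as is your observation that $R_3^{(11)}$ is forced to screw because $|\omega_{11}|^2=3$ is odd. The one substantive divergence is the choice of prism, and it sits exactly where you flag your ``main obstacle.'' You truncate by the horizontal slab $-\sqrt{11}\le t\le\sqrt{11}$; since $R_2^{(11)}$ sends the fibre over $\zeta=1/2$ by $t\mapsto t+\sqrt{11}$, the $t$-range $[-\sqrt{11},\sqrt{11}]$ goes to $[0,2\sqrt{11}]$ and leaves the slab, so both screwed faces must be subdivided along tilted curves and paired partly by $R_j^{(11)}$ and partly by ${T^{(11)}}^{-1}R_j^{(11)}$. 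The paper instead uses a tilted prism whose vertical edges over the three triangle vertices span $[-\sqrt{11},\sqrt{11}]$, $[-\sqrt{11}/2,3\sqrt{11}/2]$ and $[0,2\sqrt{11}]$ respectively; this is engineered so that $R_2^{(11)}$ and $R_3^{(11)}$ each carry their face exactly onto itself (the two halves swapped across the auxiliary mid-points $w_1,w_2,w_3$), and only the $R_1^{(11)}$-face needs the extra pairing $T^{(11)}R_1^{(11)}$. Either domain is legitimate, but with your slab you still owe the explicit vertex and edge identifications showing each cycle closes with precisely the asserted power of $T^{(11)}$; that bookkeeping is the actual content of the paper's proof (its displayed list of actions on $v_0^\pm,v_j^\pm,w_j$) and is the part currently missing from your argument.
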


\begin{proof}
Those matrices are constructed by lifting generators of $Isom(\mathcal{O}_{11})$ and also $T^{(11)}$ is a generator of the kernel of the map $\Pi_*$.
A fundamental domain can be constructed with
side pairings as Figure 3.4, where the vertices of the prism are
$v_1^+=((1-i\sqrt{11})/4,\sqrt{11}),$ $v_2^+=((3+i\sqrt{11})/4,3\sqrt{11}/2),$ $v_3^+=((-1+i\sqrt{11})/4,2\sqrt{11})$ for the upper cap of the prism
and $v_1^-=((1-i\sqrt{11})/4,-\sqrt{11}),$ $v_2^-=((3+i\sqrt{11})/4,-\sqrt{11}/2),$ $v_3^-=((-1+i\sqrt{11})/4,0)$ for the base.
The points $v_0^{\pm}$ are the middle points of the edges $(v_1^{\pm},v^{\pm}_3)$. In particular, we introduce more three points $w_1=((1-i\sqrt{11})/4,0),$
 $w_2=((3+i\sqrt{11})/4,\sqrt{11}/2)$ and $w_3=((-1+i\sqrt{11})/4,\sqrt{11})$.
The actions of side-pairing maps on $\mathfrak{R}$ are given by
\begin{align*}
R^{(11)}_1(\zeta,t)&=(-\zeta,t),\\
R^{(11)}_2(\zeta,t)&=(-\zeta+1,t+2\Im m{\zeta}+\sqrt{11}),\\
R^{(11)}_3(\zeta,t)&=(-\zeta+\omega_{11},t+2\Im m{(\bar{\omega}_{11}\zeta})+\sqrt{11}),\\
T^{(11)}(\zeta,t)&=(\zeta,t+2\sqrt{11}).
\end{align*}
We describe the side pairing in terms of the action on the vertice:
\begin{eqnarray*}
R^{(11)}_1&:&(v_0^+,v_1^+,w_1,v_0^-)\longrightarrow(v_0^+,w_3,v_3^-,v_0^-),\\
T^{(11)}R^{(11)}_1&:&(w_1,v_1^-,v_0^-)\longrightarrow(v_3^+,w_3,v_0^+),\\
R^{(11)}_2&:&(v_1^+,w_1,v_1^-,v_2^-)\longrightarrow(v_2^+,w_2,v_2^-,v_1^+),\\
R^{(11)}_3&:&(v_2^+,w_2,v_2^-,v_3^-)\longrightarrow(v_3^+,w_3,v_3^-,v_2^+),\\
T^{(11)}&:&(v_0^-,v_1^-,v_2^-,v_3^-)\longrightarrow(v_0^+,v_1^+,v_2^+,v_3^+).
\end{eqnarray*}
The presentation can be obtained following from the edge cycles of the fundamental domain.
\end{proof}

\begin{figure}
\centering
\psfrag{T}{\Large$T^{(11)}$}\psfrag{R1}{\Large$R_1^{(11)}$}\psfrag{R2}{\Large$R_2^{(11)}$}\psfrag{R3}{\Large$R_3^{(11)}$}
\psfrag{TR1}{\Large$T^{(11)}R_1^{(11)}$}
\psfrag{z1+}{\Large$v_1^+$}\psfrag{z2+}{\Large$v_2^+$}\psfrag{z3+}{\Large$v_3^+$}\psfrag{z0+}{\Large$v_0^+$}
\psfrag{z1-}{\Large$v_1^-$}\psfrag{z2-}{\Large$v_2^-$}\psfrag{z3-}{\Large$v_3^-$}
\psfrag{z0-}{\Large$v_0^-$}
\psfrag{w1}{\Large$w_1$}\psfrag{w2}{\Large$w_2$}\psfrag{w3}{\Large$w_3$}
\resizebox{2.5in}{!}{\includegraphics{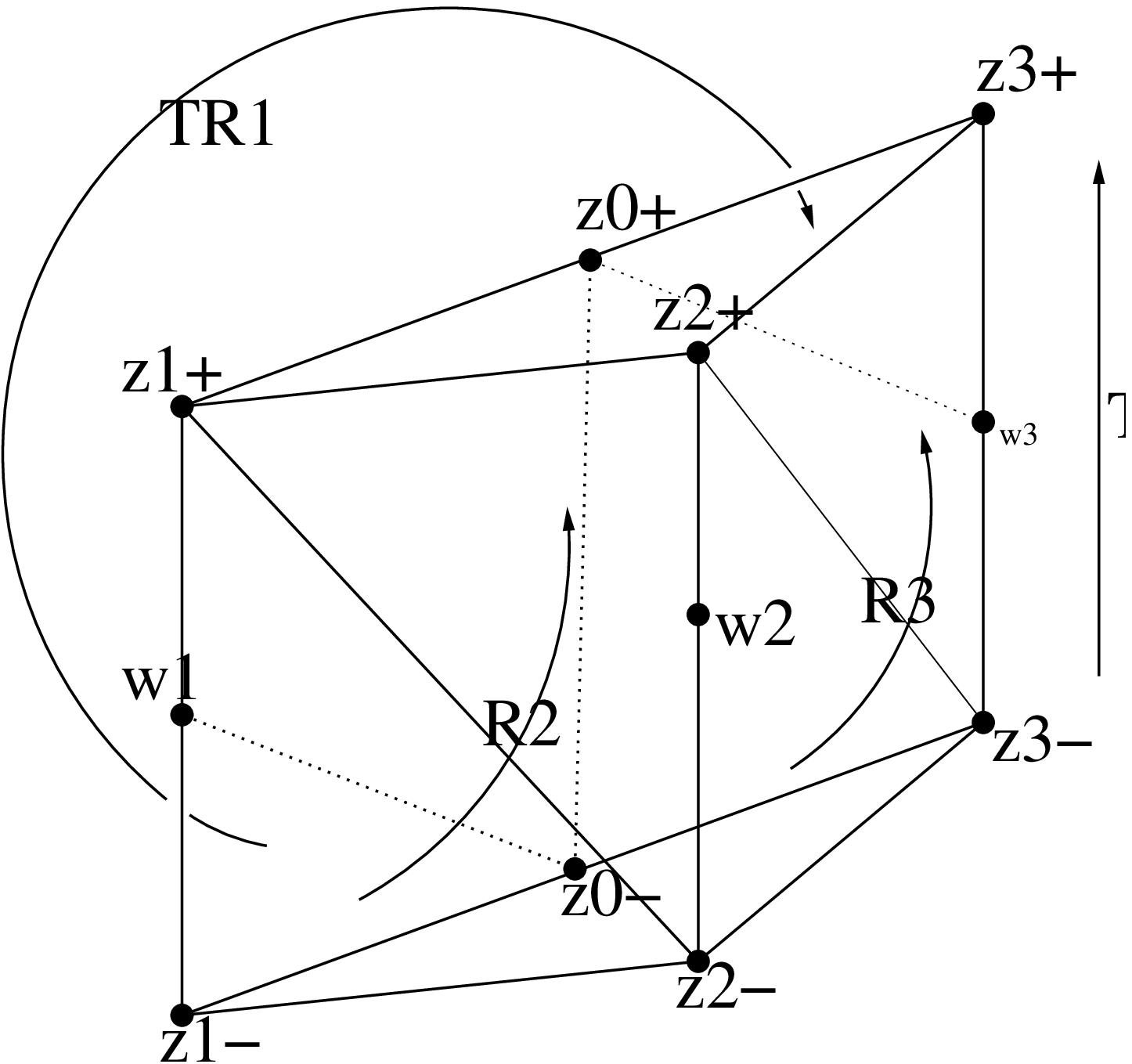}}
\medskip
\makeatletter\def\@captype{figure}\makeatother
\caption{A fundamental domain $\mathbf{\Sigma}_{11}$ for $(\Gamma_{11})_\infty$ in the Heisenberg group: the map $R^{(11)}_1$ rotates through $\pi$ about $\zeta=0$, the action of parabolic $R_2^{(11)}$ is a screw Heisenberg rotation through $\pi$ about $\zeta=1/2$ followed by an upward vertical translation by $\sqrt{11}$ and the map $R_3^{(11)}$ is a screw Heisenberg rotation through $\pi$ about $\zeta=(1+i\sqrt{11})/4$ followed by an upward vertical translation by $\sqrt{11}$.}
\end{figure}

\section{The statement of our method and results}

In this section, we introduce the method used in (\cite{FP}, \cite{FFP}) to determine the generators of the Euclidean Picard groups and then state our results.

Recall that the map
$$I_0=\left[\begin{array}{ccc}0&0&1\\0&-1&0\\1&0&0\end{array}\right],$$
defined in the Section 2.3. We consider the isometric sphere $\mathcal{B}_0$ of $I_0$ given by (\ref{eq:2-4}), which is a Cygan sphere centred $o=(0,0,0)$ with radius $\sqrt{2}$. Observe that $I_0$ maps $\mathcal{B}_0$ to itself and swaps the inside and the outside of $\mathcal{B}_0$. Given an element of $\Gamma_d$ as the form (\ref{eq:2-6}), we know that the radius of isometric sphere is $\sqrt{2/|g|}$. For each case $\mathcal{O}_d$, the radius of isometric sphere is not greater than $\sqrt{2}$ since the absolute of $g$ is not smaller than 1 for $g\in\mathcal{O}_d$. We show that the largest isometric spheres are all the images of $\mathcal{B}_0$ under the elements in $(\Gamma_d)_\infty$.

\begin{proposition}
An isometric sphere has the largest radius if and only if it is the image of $\mathcal{B}_0$ under an element in $(\Gamma_d)_\infty$.
\end{proposition}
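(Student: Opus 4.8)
The plan is to reduce everything to a statement about the coefficient $g$ in the matrix form \eqref{eq:2-6} of an element $G\in\Gamma_d$ with $G(q_\infty)\neq q_\infty$. By the discussion preceding the statement, the radius of the isometric sphere of $G$ is $r=\sqrt{2/|g|}$, so the isometric sphere has the largest possible radius, namely $\sqrt{2}$, exactly when $|g|=1$, i.e.\ when $g$ is a unit of $\mathcal{O}_d$. For the rings under consideration the units are among $\pm1$ (and additionally $\pm i$, $\pm\omega,\pm\omega^2$ in the excluded cases $d=1,3$), so in every case $|g|=1$ forces $g$ to be a unit. Thus the first step is purely arithmetic: $r=\sqrt 2$ iff $g$ is a unit in $\mathcal{O}_d$.

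The second step is the ``if'' direction: if $H\in(\Gamma_d)_\infty$ then $HI_0$ sends $q_\infty$ to $H(q_0)\neq q_\infty$, and the isometric sphere of $HI_0$ is $H(\mathcal B_0)$ because elements of $(\Gamma_d)_\infty\subset Isom(\mathfrak R)$ preserve the Cygan metric and hence carry Cygan spheres of radius $\sqrt2$ to Cygan spheres of radius $\sqrt2$. (Concretely, writing $H$ in the upper-triangular form \eqref{eq:2-3} with diagonal unit entries, one checks the product $HI_0$ has lower-left entry a unit.) So every image of $\mathcal B_0$ under $(\Gamma_d)_\infty$ is an isometric sphere of maximal radius, and conversely any isometric sphere of radius $\sqrt2$ is such an image once we prove the ``only if'' direction.

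The third and main step is the ``only if'' direction: suppose $G\in\Gamma_d$ has isometric sphere of radius $\sqrt2$, so $|g|=1$ and $g$ is a unit $\epsilon$. I would show $G$ can be written $G=HI_0$ for some $H\in(\Gamma_d)_\infty$, equivalently that $G I_0^{-1}=GI_0$ fixes $q_\infty$. Since $I_0(q_\infty)=q_0$ and $I_0(q_0)=q_\infty$, one has $GI_0(q_\infty)=G(q_0)$; the claim is that $G(q_0)=q_\infty$, i.e.\ that the lower-left entry of $GI_0$ vanishes. Multiplying out \eqref{eq:2-6} by $I_0$ on the right, the new lower-left entry is $j$ (up to sign), so I must argue that $G$ with $|g|=1$ automatically has $j=0$. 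This follows from the unitarity relations for $G\in U(2,1)$ together with integrality: the relation $\langle G\mathbf e_1,G\mathbf e_1\rangle=\langle\mathbf e_1,\mathbf e_1\rangle=0$ forces $2\Re e(\bar a g)+|d|^2=0$, and more usefully $\langle G\mathbf e_3,G\mathbf e_3\rangle=0$ gives $2\Re e(\bar c g)+|f|^2+\cdots=0$ type constraints; combined with $g=\epsilon$ a unit and all entries in $\mathcal O_d$, these Hermitian identities pin down $j$. The cleanest route is probably: $G$ has isometric sphere centred at $G^{-1}(q_\infty)=(\bar h/\bar g,2\Im m(\bar j/\bar g),0)$, and the involution $I_0$ followed by the appropriate Heisenberg translation by this centre and rotation by $\epsilon$ reconstructs $G$ exactly because both sides have the same action on $q_\infty$, $q_0$ and the same isometric sphere; a map in $PU(2,1)$ is determined by a point not on a bisector together with the bisector data, or one simply checks $G\cdot(T_{\text{centre}}R_\epsilon I_0)^{-1}$ fixes $q_\infty$ by direct matrix computation. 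I expect this last verification — showing that $|g|=1$ genuinely forces the remaining lower row of $G$ to be that of an element of $(\Gamma_d)_\infty I_0$, rather than merely $r=\sqrt2$ — to be the crux, and it is where the specific arithmetic of $\mathcal O_d$ (no nontrivial units beyond $\pm1$) is used to rule out spurious solutions of the unitarity equations.

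A convenient packaging: let $G\in\Gamma_d$ with $|g|=1$. Conjugating/multiplying by the element of $(\Gamma_d)_\infty$ realising the Heisenberg translation by $-G^{-1}(q_\infty)$ and the rotation by $g^{-1}$, we may assume $G$ has isometric sphere $\mathcal B_0$ itself and $g=1$; then I claim $G\in\{I_0\}\cdot(\Gamma_d)_\infty$, which by comparing with $I_0$ amounts to checking that $I_0^{-1}G$ is upper triangular with unit diagonal and $\mathcal O_d$ entries — a finite computation with the unitarity relations. Running this back through the normalisation gives $G\in(\Gamma_d)_\infty I_0$, hence its isometric sphere is an image of $\mathcal B_0$ under $(\Gamma_d)_\infty$, completing the proof.
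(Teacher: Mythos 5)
Your first step (radius $\sqrt{2}$ iff $|g|=1$) and the general strategy of reducing to a coset statement are fine, but the core of your ``only if'' direction rests on a false claim. You assert that $|g|=1$ forces $j=0$, equivalently that $G(q_0)=q_\infty$, so that $G=HI_0$ with $H\in(\Gamma_d)_\infty$. This is not true: take $G=I_0T$ for a nontrivial Heisenberg translation $T=T_{\beta,t}\in(\Gamma_d)_\infty$; then $g=1$ but $j=(-|\beta|^2+it)/2\neq 0$ and $G(q_0)=I_0(\beta,t)\neq q_\infty$. No amount of unitarity-plus-integrality will rescue this, and indeed if it were true the proposition would say every maximal isometric sphere \emph{is} $\mathcal{B}_0$ rather than an image of it. The correct statement is $G\in(\Gamma_d)_\infty\, I_0\,(\Gamma_d)_\infty$: the paper takes $T\in(\Gamma_d)_\infty$ to be the Heisenberg translation carrying the origin to the centre $G^{-1}(q_\infty)=(\bar h,2\Im m(\bar j))$ (whose matrix has top-right entry $\bar j\in\mathcal{O}_d$, so it genuinely lies in $\Gamma_d$), observes that $GTI_0$ fixes $q_\infty$ because $TI_0(q_\infty)=T(q_0)=G^{-1}(q_\infty)$, and concludes that the isometric sphere of $G$, being the Cygan sphere of radius $\sqrt{2}$ centred at $T(q_0)$, is exactly $T(\mathcal{B}_0)$. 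It is this right-hand factor $T$, which your main argument omits, that produces the ``image of $\mathcal{B}_0$'' in the statement.

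Your closing ``packaging'' paragraph does gesture at the right normalisation, so the idea is recoverable, but the sides are wrong there too: from ``the isometric sphere of $G$ is $\mathcal{B}_0$,'' i.e.\ $G^{-1}(q_\infty)=q_0$, one gets that $GI_0$ fixes $q_\infty$, hence $G\in(\Gamma_d)_\infty I_0$, not $I_0(\Gamma_d)_\infty$; checking that $I_0^{-1}G$ is upper triangular would instead require $G(q_\infty)=q_0$. The same left/right slip appears in your ``if'' direction: the isometric sphere of $HI_0$ is $\mathcal{B}_0$ itself, since it is centred at $(HI_0)^{-1}(q_\infty)=I_0H^{-1}(q_\infty)=q_0$; the element whose isometric sphere is $H(\mathcal{B}_0)$ is $I_0H^{-1}$.
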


\begin{proof}
Obviously, the image of $\mathcal{B}_0$ under an element in $(\Gamma_d)_\infty$ has the largest radius $\sqrt{2}$. On the contrary, given an element $G$ as the form (\ref{eq:2-6}) such that $G(q_\infty)\neq q_\infty$, then the isometric sphere of $G$ has the largest radius which leads to $g=1$. So the centre of isometric sphere of $G$ is $G^{-1}(\infty)=(\bar{h},2\Im m\bar{j},0)$ in horospherical coordinates. Since $\bar{h}$ and $2\Im m\bar{j}\in\mathcal{O}_d$, we can take a Heisenberg translation $T\in(\Gamma_d)_\infty$ mapping the origin to $(\bar{h},2\Im m\bar{j})$. Writing $T'=GTI_0$, we know that $T'$ fixes $\infty$. We conclude explicitly that the isometric sphere of $G$ is
$$\left\{\mathbf{z}\in\textbf{H}^2_{\mathbb{C}}:|\langle \mathbf{z},q_{\infty}\rangle|=|\langle \mathbf{z},G^{-1}(q_{\infty})\rangle|=|\langle \mathbf{z},TI_0(q_{\infty})\rangle|\right\},$$
which is the image of $\mathcal{B}_0$ under $T$.
\end{proof}

Our method is based on the special feature that the orbifold $\mathbf{H}^2_\mathbb{C}/\Gamma_d$ has only one cusp for $d=2,7,11$. For these types of orbifolds, one would like to construct a fundamental domain using the Ford domain (that is the exteriors of isometric spheres of all elements not fixing infinity), namely, the intersection of the Ford domain and a fundamental domain for the stabiliser of infinity. The Ford domain is canonical, but we can choose a fundamental domain for the stabiliser freely. As the first step toward the construction of a fundamental domain, we should always determine the generators of the group. In the previous section, we found suitable generators of the stabliser and constructed a fundamental domain. We will show that adjoining $I_0$ to $(\Gamma_d)_\infty$ gives the Euclidean Picard modular groups $\Gamma_d$. The basic idea of the proof can be described easily. Analogous to Theorem 3.5 of \cite{FP} we shall prove that $\langle R^{(d)}_1,R^{(d)}_2,R^{(d)}_3,T^{(d)},I_0\rangle $ has only one cusp. The fact that $PU(2,1;\mathcal{O}_d)$ has the same cusp and the stabiliser of infinity as the group generated by $R^{(d)}_1,R^{(d)}_2,R^{(d)}_3,T^{(d)},I_0$ shows that they are the same. The key step is to find a union of isometric spheres such that a fundamental domain for $\Gamma_d$ is contained in the intersection of their exteriors and a fundamental domain for the stabiliser, which implies that the group $\langle R^{(d)}_1,R^{(d)}_2,R^{(d)}_3,T^{(d)},I_0\rangle$ has only one cusp. In other words, we want to show the union of the boundaries of these isometric spheres in Heisenberg group contains each of the prisms we constructed above.  The problem of determining this will be discussed in the next section.

A simple lemma will be used in the proof of our theorems many times, we state it as follows.

\begin{lemma}(c.f. \cite{FFP})\ All Cygan balls are affinely convex.
\end{lemma}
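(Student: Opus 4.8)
The plan is to establish affine convexity of a Cygan ball by writing down the defining inequality explicitly and showing that the region it cuts out in $\mathfrak{R}\times\mathbb{R}_+$ (or in $\bar{\mathfrak{S}}-\{\infty\}$) is the intersection of an affine half-space structure with a genuinely convex body. First I would normalise: since all Cygan balls are images of $\mathcal{B}_0$ under Heisenberg translations, rotations and dilations, and since these maps are affine in the horospherical coordinates $(\zeta,t,u)$ — translation by $(\tau,v)$ sends $(\zeta,t,u)\mapsto(\zeta+\tau,t+v+2\Im m(\tau\bar\zeta),u)$, rotation sends $(\zeta,t,u)\mapsto(e^{i\theta}\zeta,t,u)$, and dilation sends $(\zeta,t,u)\mapsto(\lambda\zeta,\lambda^2 t,\lambda^2 u)$, all affine — it suffices to prove the statement for a single Cygan ball centred at a convenient point, say the one bounded by $\mathcal{B}_0$, namely $\{(\zeta,t,u):\bigl||\zeta|^2+u+it\bigr|\le 2\}$ (and similarly for the extended Cygan metric with $u$ replaced by $|u|$, which only helps convexity).

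The key computation is then to expand the defining inequality. Squaring, $\bigl||\zeta|^2+u+it\bigr|^2\le 4$ becomes $(|\zeta|^2+u)^2+t^2\le 4$. Now I would observe that for fixed $\zeta$ this is a convex condition in $(u,t)$ — indeed $u$ enters only through $(|\zeta|^2+u)^2$, which is convex in $u$, and $t^2$ is convex in $t$. The real content is convexity jointly in all of $\zeta,t,u$: the function $(\zeta,t,u)\mapsto(|\zeta|^2+u)^2+t^2$ is not convex as written because of the $|\zeta|^2$ term, so the slick route is instead to parametrise the ball's cross-sections. For each fixed value of $\zeta$, the slice $\{(t,u):(|\zeta|^2+u)^2+t^2\le 4\}$ is a disc of radius $2$ in the $(t,u)$-plane centred at $u=-|\zeta|^2$, $t=0$; writing $s=u+|\zeta|^2$ this is the fixed disc $s^2+t^2\le 4$. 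So the Cygan ball is the image of the product $\{\zeta\in\mathbb{C}\}\times\{s^2+t^2\le 4\}$ under the affine-in-each-slice shear $(\zeta,s,t)\mapsto(\zeta,s-|\zeta|^2,t)$ — but that shear is quadratic, not affine, so this realisation alone does not give affine convexity. I would therefore fall back to a direct verification: take two points $p_0=(\zeta_0,t_0,u_0)$ and $p_1=(\zeta_1,t_1,u_1)$ in the ball, and show the Euclidean segment $p_\lambda=(1-\lambda)p_0+\lambda p_1$ stays in it, i.e. $\bigl((1-\lambda)u_0+\lambda u_1+|\zeta_\lambda|^2\bigr)^2+\bigl((1-\lambda)t_0+\lambda t_1\bigr)^2\le 4$ where $\zeta_\lambda=(1-\lambda)\zeta_0+\lambda\zeta_1$. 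Using $|\zeta_\lambda|^2\le(1-\lambda)|\zeta_0|^2+\lambda|\zeta_1|^2$ (convexity of $|\cdot|^2$) together with the fact that $u_0+|\zeta_0|^2\le 0$ and $u_1+|\zeta_1|^2\le 0$ on $\bar{\mathfrak S}$ — here is where the sign matters — one gets $0\ge (1-\lambda)(u_0+|\zeta_0|^2)+\lambda(u_1+|\zeta_1|^2)\le u_\lambda+|\zeta_\lambda|^2\le 0$... one must be careful: since both endpoints lie in the closed Siegel domain, $u_i+|\zeta_i|^2\le 0$, and monotonicity of $x\mapsto x^2$ on $(-\infty,0]$ combined with the two inequalities $(1-\lambda)(u_0+|\zeta_0|^2)+\lambda(u_1+|\zeta_1|^2)\le u_\lambda+|\zeta_\lambda|^2\le 0$ yields $(u_\lambda+|\zeta_\lambda|^2)^2\le\bigl((1-\lambda)(u_0+|\zeta_0|^2)+\lambda(u_1+|\zeta_1|^2)\bigr)^2$, and then Jensen on $x^2$ and on $t^2$ closes the estimate.

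The main obstacle — and the step I would write out most carefully — is exactly this sign/monotonicity argument: plain Jensen fails because $(|\zeta|^2+u)^2$ is not jointly convex, so one genuinely needs that every point of the Cygan ball satisfies $|\zeta|^2+u\le 0$ (equivalently lies in $\bar{\mathfrak S}$), which forces the relevant quantities into the region where $x\mapsto x^2$ is decreasing, turning the "wrong" direction of the $|\zeta|^2$-inequality into a helpful one. I would state this as the crux and note that it is precisely why the lemma is about Cygan \emph{balls} and not, say, Cygan spheres or the complements. For the extended metric $\tilde\rho_0$, replacing $u$ by $|u|$ only shrinks the set in the $u$-direction to a symmetric interval and the same argument applies slice by slice. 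Finally I would remark that once the normalised case is done, affine invariance of the generating maps (translations, rotations, dilations) promotes it to all Cygan balls, completing the proof; this is the same reduction used in \cite{FFP}.
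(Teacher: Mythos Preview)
The paper does not supply a proof of this lemma; it is simply stated with a reference to \cite{FFP}. So there is nothing to compare against at the level of argument. Your overall strategy --- reduce by affine Heisenberg isometries to the standard ball and then verify convexity of the defining sublevel set directly --- is the natural one and is essentially what one finds in \cite{FFP}.

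However, your execution contains a sign error that breaks the chain of inequalities as written. In horospherical coordinates the Siegel domain is $u>0$, not $u+|\zeta|^2\le 0$: from (\ref{eq:2-1}) one has $2\Re e\,z_1+|z_2|^2=-u$, so the defining inequality $2\Re e\,z_1+|z_2|^2<0$ of $\mathfrak{S}$ reads $u>0$. Consequently $|\zeta|^2+u\ge 0$ everywhere on $\bar{\mathfrak{S}}-\{\infty\}$, the \emph{opposite} of what you assert. With the correct sign the argument is in fact simpler than you suggest: convexity of $|\cdot|^2$ gives $0\le |\zeta_\lambda|^2+u_\lambda\le (1-\lambda)(|\zeta_0|^2+u_0)+\lambda(|\zeta_1|^2+u_1)$, and since $s\mapsto s^2$ is \emph{increasing} on $[0,\infty)$ (not decreasing on $(-\infty,0]$ as you invoke), squaring preserves this inequality; Jensen for $s\mapsto s^2$ and for $t\mapsto t^2$ then closes the estimate exactly as you indicate. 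Your displayed chain $(1-\lambda)(u_0+|\zeta_0|^2)+\lambda(u_1+|\zeta_1|^2)\le u_\lambda+|\zeta_\lambda|^2$ is the wrong way round --- convexity of $|\cdot|^2$ gives the reverse inequality --- and it only appears to rescue the argument because you have simultaneously reversed the sign of $|\zeta|^2+u$. Note also that in the paper's applications one is on the Heisenberg boundary where $u=0$; there the ball is the sublevel set $|\zeta|^4+t^2\le r^4$, and since $\zeta\mapsto|\zeta|^4=(|\zeta|^2)^2$ is a convex increasing function of the convex function $|\zeta|^2$, convexity is immediate with no sign discussion at all.
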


Our aim is to prove the following results, we summarise them as three theorems.

\begin{theorem} Let $\mathcal {K}=\mathbb{Q}(\sqrt{-2})$ and let $\mathcal{O}_2=\mathbb{Z}[i\sqrt{2}]$. Then the group $PU(2,1,\mathcal{O}_2)$ is generated by the elements
$$I_0=\left[\begin{array}{ccc}0&0&1\\0&-1&0\\1&0&0\end{array}\right], R^{(2)}_1=\left[\begin{array}{ccc}1&0&0\\0&-1&0\\0&0&1\end{array}\right],
 R^{(2)}_2=\left[\begin{array}{ccc}1&2&-2\\0&-1&2\\0&0&1\end{array}\right],$$
$$ R^{(2)}_3=\left[\begin{array}{ccc}1&-i\sqrt{2}&-1\\0&-1&i\sqrt{2}\\0&0&1\end{array}\right]
\quad\text{and}\quad T^{(2)}=\left[\begin{array}{ccc}1&0&i\sqrt{2}\\0&1&0\\0&0&1\end{array}\right]
.$$
\end{theorem}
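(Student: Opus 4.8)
The plan is to show that the group $\Gamma' := \langle I_0, R^{(2)}_1, R^{(2)}_2, R^{(2)}_3, T^{(2)}\rangle$ coincides with $\Gamma_2 = PU(2,1;\mathcal{O}_2)$. One inclusion is immediate: every listed generator has entries in $\mathcal{O}_2$ and is unitary with respect to $\langle\cdot,\cdot\rangle$, so $\Gamma'\subseteq\Gamma_2$. For the reverse inclusion I would follow the Ford-domain strategy outlined just before the theorem (and in \cite{FP}, \cite{FFP}). By Proposition~3.4, $R^{(2)}_1, R^{(2)}_2, R^{(2)}_3, T^{(2)}$ generate $(\Gamma_2)_\infty$, so $(\Gamma')_\infty = (\Gamma_2)_\infty$. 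Since $\Gamma_2$ acts on $\mathbf{H}^2_\mathbb{C}$ with exactly one cusp (the orbit of $q_\infty$), it suffices to prove that $\Gamma'$ also has only one cusp: then a standard argument (a discrete group containing $\Gamma'$ with the same cusp and the same cusp stabiliser must equal $\Gamma'$) forces $\Gamma' = \Gamma_2$.

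To show $\Gamma'$ has a single cusp, I would exhibit an explicit fundamental domain for $\Gamma'$ of the form $D = E \cap \mathbf{\Sigma}_2'$, where $E$ is the exterior of a suitable finite union of isometric spheres and $\mathbf{\Sigma}_2'$ is the prism fundamental domain $\mathbf{\Sigma}_2$ for $(\Gamma')_\infty = (\Gamma_2)_\infty$ constructed in Proposition~3.4 (extended vertically into $\mathfrak{S}$). The crucial geometric input, as the text emphasises, is that the union of the \emph{boundaries} of these isometric spheres, intersected with the Heisenberg group, should cover the base prism $\mathbf{\Sigma}_2$; equivalently, every point of $\partial\mathbf{\Sigma}_2$ lies inside or on some isometric sphere of an element of $\Gamma'$ not fixing $q_\infty$. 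By Proposition~4.1 the largest isometric spheres are precisely the $(\Gamma_2)_\infty$-translates of $\mathcal{B}_0$, the Cygan sphere of radius $\sqrt{2}$ centred at $o=(0,0,0)$; these are centred at the points of the orbit $(\Gamma_2)_\infty\cdot o \subset \mathfrak{R}$. So the concrete task is: take finitely many such translates of $\mathcal{B}_0$ (centred at the lattice points $\mathcal{O}_2$ and their $\Delta$-images visible from the prism), and verify that the prism $\mathbf{\Sigma}_2$ — a vertical slab of height $2\sqrt{2}$ over the triangle with vertices $-1+\tfrac{\sqrt{2}}{2}i$, $1\pm\tfrac{\sqrt{2}}{2}i$ — is covered by their closed Cygan balls. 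Here Lemma~4.2 (affine convexity of Cygan balls) is the main tool: it reduces the covering check to verifying that the vertices $v_j^\pm$ of the prism lie in the appropriate closed balls, since a Cygan ball containing all vertices of a (affine) cell contains the cell.

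The key steps, in order, would be: (1) record $\Gamma'\subseteq\Gamma_2$ and, via Proposition~3.4, $(\Gamma')_\infty = (\Gamma_2)_\infty$; (2) list the finitely many isometric spheres of radius $\sqrt{2}$ to be used — the $\mathcal{B}_0$-translates centred at $0, \pm 2, \pm i\sqrt{2}, \pm 1 \pm \tfrac{i\sqrt2}{2}$ and whatever other orbit points meet the prism — and note each arises from an element $T I_0$ with $T\in(\Gamma')_\infty$; (3) using Lemma~4.2, check by direct Cygan-distance computation that each closed sub-region of $\mathbf{\Sigma}_2$ cut out by the prism's combinatorics has all its vertices within Cygan distance $\sqrt{2}$ of one of the chosen centres, hence the balls cover $\mathbf{\Sigma}_2$; (4) conclude that the Ford domain of $\Gamma'$ intersected with $\mathbf{\Sigma}_2$ has no cusp other than $q_\infty$, so $\Gamma'$ has exactly one cusp; (5) invoke the one-cusp comparison argument to get $\Gamma' = \Gamma_2$.

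The main obstacle is step (3): the covering verification is a genuinely finite but fiddly computation in the Cygan metric, complicated by the fact that $\mathbf{\Sigma}_2$ is a fundamental domain for the index-$2$ subgroup $\Delta$ rather than for all of $Isom(\mathcal{O}_2)$ — so one must be careful which orbit of isometric spheres is available and confirm that the $\sqrt{2}$-balls alone suffice (no smaller spheres needed), which is exactly the phenomenon that makes the $d=2$ case behave like $d=1,3$. A secondary subtlety is checking that the side-pairing transformations of $D = E\cap\mathbf{\Sigma}_2$ all lie in $\Gamma'$ and that Poincaré's polyhedron theorem applies cleanly (finite vertex cycles, correct dihedral angles), so that $D$ is genuinely a fundamental domain and the cusp count is legitimate; this parallels Theorem~3.5 of \cite{FP} and should go through with the combinatorial data already assembled in Proposition~3.4.
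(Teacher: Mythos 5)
Your overall architecture matches the paper's: show $\Gamma'\subseteq\Gamma_2$, identify the cusp stabilisers via the propositions of Section 3, cover the prism $\mathbf{\Sigma}_2$ by boundaries of isometric spheres of elements not fixing $q_\infty$ to conclude $\Gamma'$ has one cusp, and compare. However, your step (3) — the covering itself, which is the actual content of the proof for $d=2$ — is wrong as proposed. You claim the prism can be covered by the largest isometric spheres alone, i.e.\ the radius-$\sqrt{2}$ translates of $\mathcal{B}_0$ under $(\Gamma_2)_\infty$. These spheres are centred on the orbit of the origin, which is the lattice $\{(2m+i\sqrt{2}n,\,2\sqrt{2}k)\}$; a direct Cygan computation shows every vertex of $\mathbf{\Sigma}_2$ lies \emph{outside} all of them. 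For instance, for $v_1^+=(1-i\sqrt{2}/2,\sqrt{2})$ and any such centre, the quantity $\bigl||\zeta_1-\zeta_2|^2-it_1+it_2-2i\Im m(\zeta_1\bar\zeta_2)\bigr|$ is at least $|3/2\pm i\sqrt{2}|=\sqrt{17}/2>2$, so $v_1^+$ is not in any closed ball of radius $\sqrt{2}$ centred at an orbit point. (This is exactly the phenomenon the paper flags at the start of Section 5.1: unlike $d=1,3$, the six vertices of $\mathbf{\Sigma}_2$ lie outside $\mathcal{S}_0$ and its translates, because $\mathcal{O}_2$ has no nontrivial units and $\Delta$ has index $2$.) Moreover, the centres $\pm1\pm\tfrac{i\sqrt{2}}{2}$ in your list are not orbit points of the origin at all — translation by an odd real integer is not in $\Delta$, and $i\sqrt{2}/2\notin\mathcal{O}_2$ — so no radius-$\sqrt{2}$ isometric spheres sit there.

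The missing idea is that one must bring in a strictly smaller isometric sphere: the paper uses $\mathcal{S}_1$, the boundary of the isometric sphere of $I_0R^{(2)}_2I_0$, which has radius $1$ and is centred at $(1,0,0)$ (a point \emph{not} in the $(\Gamma_2)_\infty$-orbit of the origin), together with the four translates $T^{(2)}(\mathcal{S}_1)$, ${T^{(2)}}^{-1}(\mathcal{S}_1)$, $R^{(2)}_1(\mathcal{S}_1)$, ${T^{(2)}}^{-1}R^{(2)}_1(\mathcal{S}_1)$ and the single sphere $\mathcal{S}_0$ (Proposition 5.1). A secondary gap: your reduction via Lemma 4.2 "check the vertices $v_j^\pm$ lie in the appropriate balls" cannot work verbatim, since no single ball contains all vertices of the prism or even of one of its faces; the paper instead subdivides $\mathbf{\Sigma}_2$ by cutting off six corner tetrahedra (choosing explicit auxiliary points on the edges that lie in the overlap of two spheres), checks each tetrahedron sits in one ball by affine convexity, and checks the remaining core sits in $\mathcal{S}_0$. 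Your proof would need both of these ingredients to go through.
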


\medskip

\begin{theorem} Let $\mathcal {K}=\mathbb{Q}(\sqrt{-7})$ and let $\mathcal{O}_7=\mathbb{Z}[\omega_7]$, where $\omega_7=\frac{1}{2}(1+i\sqrt{7})$, be the ring of integers of $\mathcal{K}$. Then the group $PU(2,1,\mathcal{O}_7)$ is generated by the elements
$$I_0=\left[\begin{array}{ccc}0&0&1\\0&-1&0\\1&0&0\end{array}\right], R^{(7)}_1=\left[\begin{array}{ccc}1&0&0\\0&-1&0\\0&0&1\end{array}\right],
 R^{(7)}_2=\left[\begin{array}{ccc}1&1&-\bar{\omega}_7\\0&-1&1\\0&0&1\end{array}\right],$$
 $$
 R^{(7)}_3=\left[\begin{array}{ccc}1&\bar{\omega}_7&-1\\0&-1&\omega_7\\0&0&1\end{array}\right]\quad\text{and}\quad T^{(7)}=\left[\begin{array}{ccc}1&0&i\sqrt{7}\\0&1&0\\0&0&1\end{array}\right]
.$$
\end{theorem}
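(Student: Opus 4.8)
The plan is to mimic the strategy of Falbel--Parker (Theorem 3.5 of \cite{FP}) and of \cite{FFP}, showing that the group $\Gamma'_7 := \langle R^{(7)}_1, R^{(7)}_2, R^{(7)}_3, T^{(7)}, I_0\rangle$ coincides with $\Gamma_7 = PU(2,1;\mathcal{O}_7)$. One inclusion is immediate: each of the five listed matrices lies in $PU(2,1;\mathcal{O}_7)$ (their entries are in $\mathcal{O}_7 = \mathbb{Z}[\omega_7]$ and they preserve the Hermitian form), so $\Gamma'_7 \subseteq \Gamma_7$. The work is all in the reverse inclusion, and the key structural observation is that $\Gamma'_7$ and $\Gamma_7$ have the same stabiliser of $q_\infty$: by the previous section, $R^{(7)}_1, R^{(7)}_2, R^{(7)}_3, T^{(7)}$ generate $(\Gamma_7)_\infty$, so $(\Gamma'_7)_\infty \supseteq (\Gamma_7)_\infty$, and since $\Gamma'_7 \subseteq \Gamma_7$ we get equality $(\Gamma'_7)_\infty = (\Gamma_7)_\infty$. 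Given equality of cusp stabilisers, the standard covering-space / Ford-domain argument shows that to conclude $\Gamma'_7 = \Gamma_7$ it suffices to prove that $\Gamma'_7$ acts on $\mathbf{H}^2_{\mathbb{C}}$ with a single cusp (equivalently, that $\mathbf{H}^2_{\mathbb{C}}/\Gamma'_7 \to \mathbf{H}^2_{\mathbb{C}}/\Gamma_7$ is a bijection); this is where the hypothesis that $\mathbf{H}^2_{\mathbb{C}}/\Gamma_7$ has exactly one cusp (valid since $\mathcal{O}_7$ has class number one) is used.

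To prove the one-cusp property I would build a (partial) Ford domain for $\Gamma'_7$ and check that its intersection with the prism $\mathbf{\Sigma}_7$ of Figure~3.3 meets $\partial\mathfrak{S}$ only at $q_\infty$. Concretely: the isometric sphere $\mathcal{B}_0$ of $I_0$ has radius $\sqrt{2}$, and by the Proposition above all isometric spheres of elements of $\Gamma_7$ not fixing $q_\infty$ have radius $\le \sqrt{2}$, with radius exactly $\sqrt{2}$ attained precisely by the $(\Gamma_7)_\infty$-translates of $\mathcal{B}_0$. Hence the Ford domain is contained in the exterior of $\mathcal{B}_0$ together with finitely many of its $(\Gamma_7)_\infty$-images and possibly a few smaller isometric spheres. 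The crucial geometric step is then to exhibit an explicit finite list of isometric spheres, all of which are images of $\mathcal{B}_0$ under elements of $\Gamma'_7$ (products of $I_0$ with elements of $(\Gamma_7)_\infty$, e.g.\ $T_{\beta}I_0$ for suitable $\beta\in\mathcal{O}_7$), such that the union of the boundaries of these Cygan spheres, intersected with $\mathfrak{R}$, covers the prism $\mathbf{\Sigma}_7$. Once the prism is covered, every point of $\mathfrak{R}$ is $\Gamma'_7$-equivalent to a point outside all these spheres, hence to the base of a horoball based at $q_\infty$ contained in the Ford domain; combined with the equality of stabilisers this forces the Ford domain to be the union of that single horoball with the intersection of exteriors, so $\Gamma'_7$ has one cusp, completing the proof. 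Here Lemma~4.2 (affine convexity of Cygan balls) is the workhorse: it lets one verify that a Cygan sphere covers a convex region of the prism by checking only finitely many extreme points (the vertices $v_i^\pm$, $w_i$ and the midpoints).

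The main obstacle is the explicit covering computation. For $d=7$ the relevant $\beta\in\mathcal{O}_7$ must be enumerated, the corresponding Cygan spheres of radius $\le \sqrt{2}$ centred at $(\bar\beta, 2\Im(\bar j), 0)$ written down, and one must verify — using the extended Cygan metric $\tilde\rho_0$ and the affine-convexity lemma — that for each face and vertex of the prism $\mathbf{\Sigma}_7$ the point lies inside the closed Cygan ball of one of the chosen spheres. One has to be careful that the spheres used are genuinely images of $\mathcal{B}_0$ under elements already in $\Gamma'_7$ (not merely in $\Gamma_7$), which is automatic here because they are of the form $T I_0$ with $T \in (\Gamma_7)_\infty \subseteq \Gamma'_7$. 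A secondary subtlety is that, unlike the $d=1,3$ cases, the prism is not contained inside $\mathcal{B}_0$ alone (the units are only $\pm 1$), so several neighbouring spheres are genuinely needed and the combinatorics of which sphere covers which sub-region must be recorded precisely; this bookkeeping, rather than any conceptual difficulty, is the heart of the argument, and it runs in parallel with the analogous computations already carried out for $\Gamma_2$ (Theorem~4.4) and for $\Gamma_{11}$.
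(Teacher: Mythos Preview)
Your overall strategy is exactly that of the paper: show $(\Gamma'_7)_\infty=(\Gamma_7)_\infty$, then prove $\Gamma'_7$ has a single cusp by covering the prism $\mathbf{\Sigma}_7$ with boundaries of isometric spheres of elements of $\Gamma'_7$, and conclude via the class-number-one fact that $\Gamma_7$ also has one cusp. That part is fine.

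The gap is in the covering step. You write that the spheres you need ``are of the form $T I_0$ with $T\in(\Gamma_7)_\infty$'', i.e.\ radius-$\sqrt{2}$ translates of $\mathcal{B}_0$. For $d=7$ this is \emph{not} enough: the paper states explicitly that ``only the images of $\mathcal{S}_0$ under the elements in $(\Gamma_7)_\infty$ could not cover the whole prism'', because the vertical extent $2\sqrt{7}$ of $\mathbf{\Sigma}_7$ is too large relative to the spacing of the lattice of centres. The paper remedies this by bringing in the isometric spheres $\mathcal{B}_2,\mathcal{B}_3$ of $Q=I_0R^{(7)}_2I_0$ and $Q^{-1}$, which have the \emph{smaller} Cygan radius $2^{1/4}=\sqrt{2/|\omega_7|}$ and centres $(\omega_7/2,\sqrt{7}/2)$, $(\bar\omega_7/2,\sqrt{7}/2)$. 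Since $Q$ is visibly a word in your generators, these spheres are legitimate for the $\Gamma'_7$ argument; but you must use them, and your proposal as written does not. (Contrast $d=11$, where the paper does manage with translates of $\mathcal{S}_0$ alone, and $d=2$, where a single smaller sphere $\mathcal{B}_1$ from $I_0R^{(2)}_2I_0$ suffices.)

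A second point: the actual verification that $\mathbf{\Sigma}_7$ is covered by $\mathcal{S}_0$, $R^{(7)}_2(\mathcal{S}_0)$, ${R^{(7)}_2}^{-1}(\mathcal{S}_0)$, $\mathcal{S}_2$, and three $(\Gamma_7)_\infty$-images of $\mathcal{S}_3$ is substantially more delicate than ``check the vertices''. The paper introduces sixteen auxiliary points $p_1,\dots,p_{16}$ on the faces of the prism and cuts $\mathbf{\Sigma}_7$ into seven convex pieces (a tetrahedron, two hexahedra, three pentahedra, an octahedron), each of which is then shown, via Lemma~4.2, to lie inside one specific spinal sphere. Your sketch (``check only finitely many extreme points'') is the right idea, but the combinatorics you will actually face are of this order of complexity, and the pieces are not simply the corners of the prism.
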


\medskip

\begin{theorem} Let $\mathcal {K}=\mathbb{Q}(\sqrt{-11})$ and let $\mathcal{O}_{11}=\mathbb{Z}[\omega_{11}]$, where $\omega_{11}=\frac{1}{2}(1+i\sqrt{11})$, be the ring of integers of $\mathcal{K}$. Then the group $PU(2,1,\mathcal{O}_{11})$ is generated by the elements
$$I_0=\left[\begin{array}{ccc}0&0&1\\0&-1&0\\1&0&0\end{array}\right], R^{(11)}_1=\left[\begin{array}{ccc}1&0&0\\0&-1&0\\0&0&1\end{array}\right],
 R^{(11)}_2=\left[\begin{array}{ccc}1&1&-\bar{\omega}_{11}\\0&-1&1\\0&0&1\end{array}\right],$$
 $$R^{(11)}_3=\left[\begin{array}{ccc}1&\bar{\omega}_{11}&-1-\bar{\omega}_{11}\\0&-1&\omega_{11}\\0&0&1\end{array}\right]\quad\text{and}\quad T^{(11)}=\left[\begin{array}{ccc}1&0&i\sqrt{11}\\0&1&0\\0&0&1\end{array}\right]
.$$
\end{theorem}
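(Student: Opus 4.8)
Write $G:=\langle I_0,\,R^{(11)}_1,\,R^{(11)}_2,\,R^{(11)}_3,\,T^{(11)}\rangle$; the plan is to prove $G=\Gamma_{11}$ by checking that $G\leq\Gamma_{11}$, that $G$ and $\Gamma_{11}$ have the same stabiliser of $q_\infty$, and that $\mathbf{H}^2_{\mathbb{C}}/G$ has a single cusp — the last point being the real work. Each of the five matrices has entries in $\mathcal{O}_{11}$ and is unitary for the Hermitian form, so $G\leq\Gamma_{11}=PU(2,1;\mathcal{O}_{11})$. By the Proposition computing $(\Gamma_{11})_\infty$, the maps $R^{(11)}_1,R^{(11)}_2,R^{(11)}_3,T^{(11)}$ generate $(\Gamma_{11})_\infty$, so — as they lie in $G\leq\Gamma_{11}$ — the stabiliser of $q_\infty$ in $G$ equals $(\Gamma_{11})_\infty$. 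Granting that $\mathbf{H}^2_{\mathbb{C}}/G$ has exactly one cusp, $G=\Gamma_{11}$ follows: the inclusion induces an orbifold covering $\pi\colon\mathbf{H}^2_{\mathbb{C}}/G\to\mathbf{H}^2_{\mathbb{C}}/\Gamma_{11}$ of degree $[\Gamma_{11}:G]$; both quotients have a single cusp and those cusps share the cusp group $(\Gamma_{11})_\infty=G_\infty$, so a sufficiently small horoball $H$ based at $q_\infty$ is precisely invariant under $G_\infty=(\Gamma_{11})_\infty$ in $\Gamma_{11}$, $H/G_\infty=H/(\Gamma_{11})_\infty$ embeds as a cusp neighbourhood in both quotients, and $\pi$ carries it homeomorphically onto its image, which (there being only one cusp downstairs) is the full $\pi$-preimage of itself; hence $\pi$ is one-sheeted over a nonempty open set, and by connectedness of $\mathbf{H}^2_{\mathbb{C}}/\Gamma_{11}$ its degree is $1$.

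Thus everything reduces to showing $\mathbf{H}^2_{\mathbb{C}}/G$ has a single cusp, and here I would follow the method of \cite{FP,FFP}. Let $D_G$ denote the Ford domain of $G$ — the intersection of the exteriors of the isometric spheres of all $g\in G$ with $g(q_\infty)\neq q_\infty$ — and let $\widehat{\mathbf{\Sigma}}_{11}$ be the region of the Siegel domain lying over the prism $\mathbf{\Sigma}_{11}$ of Figure 3.4, a fundamental domain for $(\Gamma_{11})_\infty=G_\infty$. Then $D_G\cap\widehat{\mathbf{\Sigma}}_{11}$ is a fundamental domain for $G$, and $\mathbf{H}^2_{\mathbb{C}}/G$ has one cusp as soon as this region meets $\partial\mathfrak{S}$ only at $q_\infty$; analogously to Theorem 3.5 of \cite{FP}, this holds provided the closed Cygan balls bounded by isometric spheres of elements of $G$ cover the prism $\mathbf{\Sigma}_{11}\subset\mathfrak{R}$. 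Since $\mathbf{\Sigma}_{11}$ is compact it suffices to exhibit finitely many such balls whose union contains it. The natural candidates are the $(\Gamma_{11})_\infty$-translates of $\mathcal{B}_0$: they have the largest radius $\sqrt2$ by the Proposition on largest isometric spheres, and they lie in $G$ because every Heisenberg translation by an element of $\mathcal{O}_{11}$ is a word in $R^{(11)}_1,R^{(11)}_2,R^{(11)}_3,T^{(11)}$; should these not suffice — in particular near the tall vertices $v_2^+,v_3^+$ of $\mathbf{\Sigma}_{11}$ — one adjoins finitely many further, necessarily smaller, isometric spheres of elements of $G$, for instance those of the maps $I_0T_\beta I_0$ with $T_\beta\in G_\infty$ a Heisenberg translation.

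To verify that a chosen finite family of Cygan balls covers $\mathbf{\Sigma}_{11}$, I would subdivide the prism into finitely many affine polytopes $P_1,\dots,P_k$ and assign to each a ball $B_i$ of the family with $P_i\subseteq B_i$. Since every Cygan ball is affinely convex (the Lemma quoted from \cite{FFP}), $P_i\subseteq B_i$ follows as soon as every vertex of $P_i$ lies in $\overline{B_i}$, so the whole verification reduces to finitely many comparisons of the extended Cygan distance $\tilde\rho_0$ against the relevant radii, carried out on the explicit points $v_1^\pm,v_2^\pm,v_3^\pm,v_0^\pm,w_1,w_2,w_3$ of $\mathbf{\Sigma}_{11}$ together with the extra subdivision points introduced.

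The main obstacle is exactly this last step — the right choice of the finite list of isometric spheres, a matching polyhedral subdivision of $\mathbf{\Sigma}_{11}$, and then the covering verification. Unlike the cases $d=1,3$ of \cite{FP,FFP}, where the prism sits inside the single ball $\mathcal{B}_0$, here the screw-rotation nature of $R^{(11)}_2,R^{(11)}_3$ tilts and vertically stretches the prism (whose vertices range in Heisenberg height from $-\sqrt{11}$ up to $2\sqrt{11}$, while $T^{(11)}$ translates only by $2\sqrt{11}$), so several neighbouring Cygan balls are genuinely required and the combinatorics of which ball covers which part of $\mathbf{\Sigma}_{11}$ must be found by direct computation; this is the content of the next section. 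Everything else — membership of the generators in $\Gamma_{11}$, the identification $G_\infty=(\Gamma_{11})_\infty$, and the covering-degree argument — is routine, and the same scheme (with $\mathbf{\Sigma}_2$ and $\mathbf{\Sigma}_7$ in place of $\mathbf{\Sigma}_{11}$) yields the other two theorems.
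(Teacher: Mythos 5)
Your proposal follows essentially the same route as the paper: reduce to showing the group generated by the five elements has one cusp and the same stabiliser of $q_\infty$ as $\Gamma_{11}$, then establish the one-cusp property by covering the prism $\mathbf{\Sigma}_{11}$ with finitely many Cygan balls bounded by isometric spheres of group elements, using the affine convexity of Cygan balls to reduce the covering check to finitely many vertex computations — which is exactly the content of the paper's Section 4 together with Proposition 5.4 (where, as it turns out, ten $(\Gamma_{11})_\infty$-images of $\mathcal{S}_0$ suffice, supplemented by a star-convexity argument for unions of overlapping balls). Your added covering-degree justification for why one cusp plus matching stabilisers forces equality is a slightly more explicit version of what the paper merely asserts, but the overall strategy is the same.
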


\begin{remark}For other values of $d$ such that $\mathcal{O}_d$ has class number one, namely $d=19,$ $43,$ $67,$ $163$, we can construct the same type of fundamental domain for $(\Gamma_d)_\infty$ in Heisenberg group as $(\Gamma_{11})_\infty$. Although all generators as the above types lie in $PU(2,1;\mathcal{O}_d)$, there is no reason why adjoining $I_0$ to $(\Gamma_d)_\infty$ should continue to generate the full group $PU(2,1;\mathcal{O}_d)$. From the point of view of geometric, the largest radius of isometric sphere is always $\sqrt{2}$ while the shape of the fundamental domain and the length of Heisenberg translations become large as $d$ getting large. In contrast the radius of isometric spheres other than the largest are going to be smaller and smaller. Consequently the mount of isometric spheres containing the fundamental domain increases so rapidly with the value of $d$ that it seem to be done by another way. Furthermore, the method of \cite{FFLP} could not be extended to non-Euclidean Picard modular groups.
\end{remark}

\bigskip

\section{The determination of isometric spheres}

Recall that the Cygan sphere $\mathcal{B}_0$ is the isometric sphere of $I_0$. The boundary of $\mathcal{B}_0$ is called a spinal sphere \cite{Go} in Heisenberg group, we denote by $\mathcal{S}_0$ which is defined by
\begin{equation}\label{eq:5-1}\mathcal{S}_0=\left\{(\zeta,t):\left||\zeta|^2+it\right|=2\right\}.\end{equation}

Indeed we only need to consider the boundaries of isometric spheres in Heisenberg group because two isometric spheres have a non-empty interior intersection if and only if the boundaries have a non-empty interior intersection.

\subsection{The case $\mathcal{O}_2$}

In the cases of $PU(2,1;\mathcal{O}_1)$ and $PU(2,1;\mathcal{O}_3)$, all the vertices of the fundamental domain for the stabiliser
of $q_\infty$ acting on $\partial\textbf{H}^2_\mathbb{C}$ lie inside $\mathcal{S}_0$.
For the group $PU(2,1;\mathcal{O}_2)$, it is not hard to show that six vertices of the prism $\mathbf{\Sigma}_2$ lie outside $\mathcal{S}_0$.
Therefore we need to find more isometric spheres whose boundaries together with $\mathcal{S}_0$ contain the prism $\mathbf{\Sigma}_2$.

We consider the map

\begin{equation*}
I_0R^{(2)}_2I_0=\left[\begin{array}{ccc}
1&0&0\\-2&-1&0\\-2&-2&1
\end{array}\right],
\end{equation*}
whose isometric sphere which we denote by $\mathcal{B}_1$ is a Cygan sphere centred at the point $(1,0,0)$ (in horospherical coordinates) with radius 1. The boundary of $\mathcal{B}_1$ is given by
\begin{equation}\label{eq:5-2}\mathcal{S}_1=\left\{(\zeta,t):\left||\zeta-1|^2+it+2i\Im m\zeta\right|=1\right\}.\end{equation}

Minimising the number of spinal spheres by the symmetry of $R^{(2)}_1$, it suffice to consider $\mathcal{S}_0$ and several images of $\mathcal{S}_1$ under some suitable elements in $(\Gamma_2)_\infty$, these are in Heisenberg coordinates given by
\begin{align*}
T^{(2)}(\mathcal{S}_1)&=\left\{(\zeta,t):\left||\zeta-1|^2+it-2i\sqrt{2}+2i\Im m\zeta\right|=1\right\},\\
{T^{(2)}}^{-1}(\mathcal{S}_1)&=\left\{(\zeta,t):\left||\zeta-1|^2+it+2i\sqrt{2}+2i\Im m\zeta\right|=1\right\},\\
R^{(2)}_1(\mathcal{S}_1)&=\left\{(\zeta,t):\left||\zeta+1|^2+it-2i\Im m\zeta\right|=1\right\},\\
{T^{(2)}}^{-1}R^{(2)}_1(\mathcal{S}_1)&=\left\{(\zeta,t):\left||\zeta+1|^2+it+2i\sqrt{2}-2i\Im m\zeta\right|=1\right\}.
\end{align*}
We claim that the prism $\mathbf{\Sigma}_2$ lies inside the union of $\mathcal{S}_0$ and these images of $\mathcal{S}_1$, see Figure 5.1 for viewing these spinal spheres.

\begin{figure}
\centering
  \subfigure[]{
    \label{fig:mini:subfig:1}
 \begin{minipage}[1]{0.42\textwidth} \centering
      \includegraphics[width=2in]{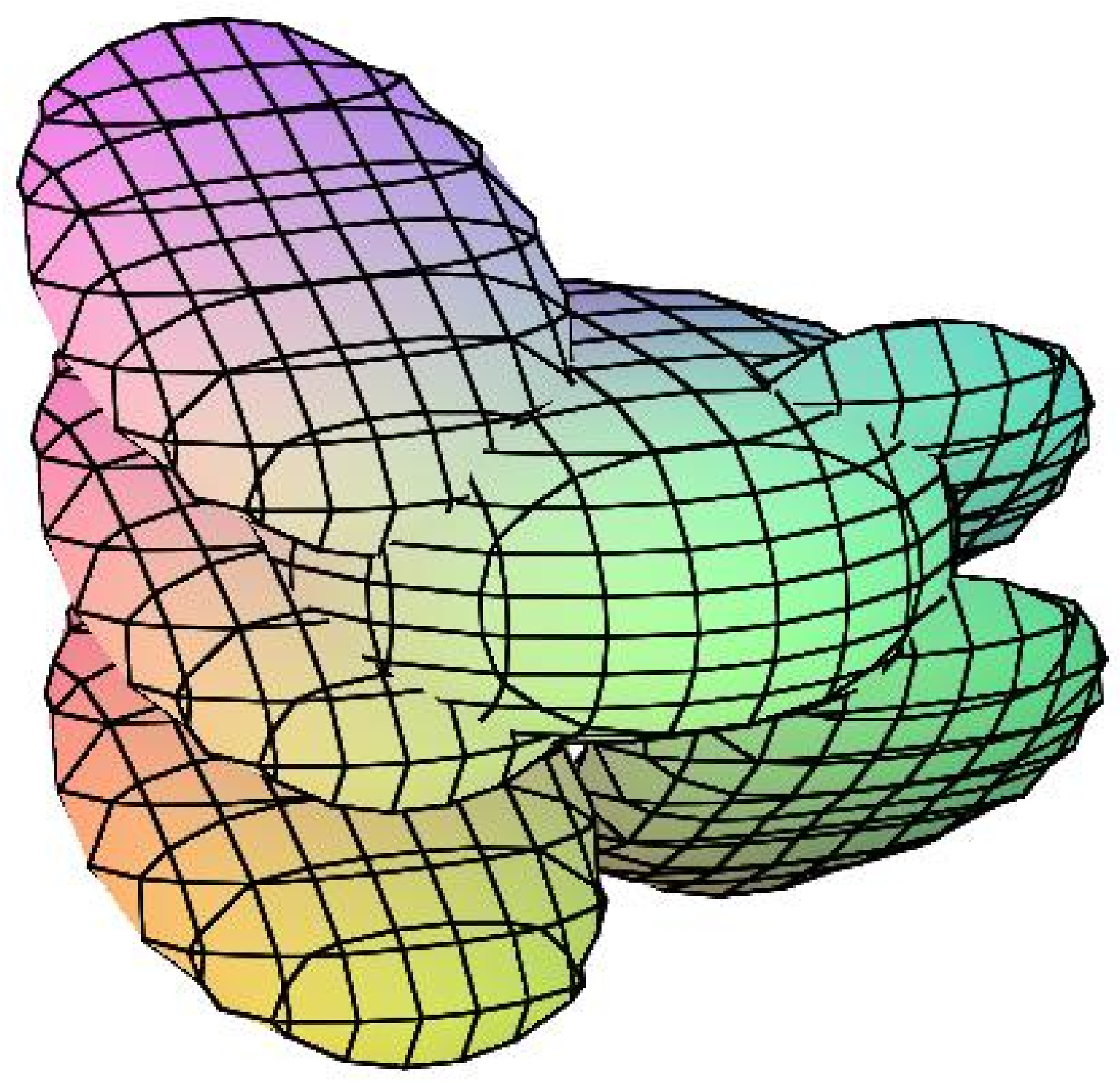}
  \end{minipage}
}%
  \subfigure[]{
    \label{fig:mini:subfig:2} 
    \begin{minipage}[3]{0.42\textwidth}
      \centering
      \includegraphics[width=2in]{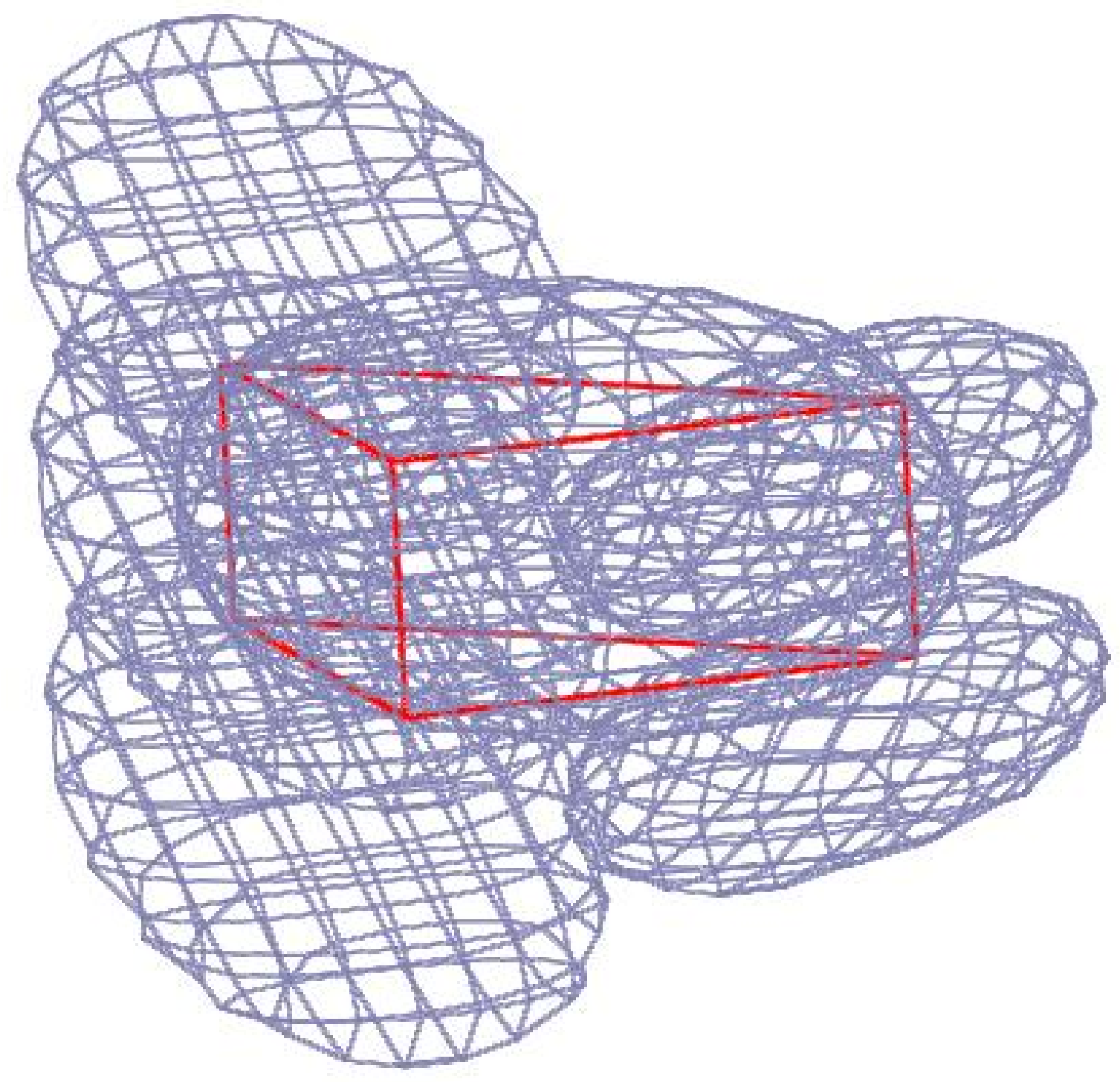}
    \end{minipage}
}
\medskip
\makeatletter\def\@captype{figure}\makeatother
\bigskip
\caption{(a)\ The shading view of neighboring spinal spheres containing the fundamental domain for $(\Gamma_2)_\infty$.
(b)\ Another view for these spinal spheres.}
  \label{fig:mini:subfig} 
\end{figure}

\begin{proposition}The prism $\mathbf{\Sigma}_2$ is contained in the union of the interiors of the spinal spheres $\mathcal{S}_0,$ $\mathcal{S}_1,$ $T^{(2)}(\mathcal{S}_1),$ ${T^{(2)}}^{-1}(\mathcal{S}_1),$ $R^{(2)}_1(\mathcal{S}_1)$
and ${T^{(2)}}^{-1}R^{(2)}_1(\mathcal{S}_1)$.
\end{proposition}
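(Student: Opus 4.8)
The argument I would give is a covering argument resting on affine convexity. Recall that each of the six spinal spheres bounds a Cygan ball, and that Cygan balls are affinely convex; consequently, whenever a finite set of points of $\mathfrak{R}=\mathbb{C}\times\mathbb{R}$ all lie inside one of the spinal spheres $\mathcal{S}$, their convex hull --- taken with respect to the affine structure of $\mathfrak{R}\cong\mathbb{R}^{3}$ --- is contained in $\mathcal{S}$. The plan is therefore: first, to decompose the prism $\mathbf{\Sigma}_2$ into finitely many affine simplices whose vertices are chosen from a suitable finite subset of $\mathfrak{R}$; second, to check that each simplex is \emph{monochromatic}, in the sense that all of its vertices lie inside one common member of the list $\mathcal{S}_0$, $\mathcal{S}_1$, $T^{(2)}(\mathcal{S}_1)$, ${T^{(2)}}^{-1}(\mathcal{S}_1)$, $R^{(2)}_1(\mathcal{S}_1)$, ${T^{(2)}}^{-1}R^{(2)}_1(\mathcal{S}_1)$; and third, to conclude by affine convexity that each simplex, and hence all of $\mathbf{\Sigma}_2$, lies in the union of the interiors of these six spinal spheres.

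The only elementary ingredient is that a point $(\zeta,t)$ lies inside the spinal sphere with centre $(\zeta_0,t_0,0)$ and radius $r$ precisely when
$$\bigl(|\zeta-\zeta_0|^{2}\bigr)^{2}+\bigl(t-t_0+2\,\Im m(\zeta\bar{\zeta}_0)\bigr)^{2}<r^{4},$$
which one reads off directly from the Cygan metric and the formulas (\ref{eq:5-1}) and (\ref{eq:5-2}). Using this I would first record the containments of the distinguished points: the origin $o$ and the six midpoints $v_4^{\pm},v_5^{\pm},v_6^{\pm}$ lie inside $\mathcal{S}_0$, whereas each of the six vertices of $\mathbf{\Sigma}_2$ lies inside exactly one of the five translates of $\mathcal{S}_1$, namely $v_1^{+},v_2^{-}\in\mathcal{S}_1$, $v_1^{-}\in{T^{(2)}}^{-1}(\mathcal{S}_1)$, $v_2^{+}\in T^{(2)}(\mathcal{S}_1)$, $v_3^{+}\in R^{(2)}_1(\mathcal{S}_1)$ and $v_3^{-}\in{T^{(2)}}^{-1}R^{(2)}_1(\mathcal{S}_1)$. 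Thus $\mathcal{S}_0$ is to be responsible for the central part of the prism and the five copies of $\mathcal{S}_1$ for its six corners, which is what Figure 5.1 displays.

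The substantive work is in the first two steps, namely producing a simplicial decomposition all of whose cells are monochromatic. The naive candidate vertices --- the $v_i^{\pm}$ together with the cap midpoints --- do not suffice: for instance $v_4^{+}$, the midpoint of the top edge $[v_1^{+},v_2^{+}]$, lies inside $\mathcal{S}_0$ alone among the six spheres, whereas $v_1^{+}$ lies inside $\mathcal{S}_1$ alone, so these two points cannot lie in a common monochromatic simplex. One is therefore forced to adjoin a few further auxiliary points, located near the corners of $\mathbf{\Sigma}_2$ and along the seams where the covering passes from $\mathcal{S}_0$ to a translate of $\mathcal{S}_1$; such points are available because the relevant balls overlap amply, as one checks from the displayed inequality. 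Selecting these auxiliary points together with a compatible decomposition so that every simplex is monochromatic is the only genuine obstacle; once that is in hand, the third step is immediate, and what remains is the routine verification of a finite list of inequalities of the displayed type, guided throughout by Figure 5.1.
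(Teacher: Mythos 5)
Your proposal takes essentially the same route as the paper: the paper likewise invokes the affine convexity of Cygan balls (Lemma 4.1), assigns the six prism vertices to the five translates of $\mathcal{S}_1$ exactly as you do, and realises your ``monochromatic decomposition'' concretely as six corner tetrahedra $\mathbb{T}(v_i^{\pm})$ --- each spanned by a vertex together with three auxiliary points chosen on its incident edges inside the overlap of $\mathcal{S}_0$ with that corner's assigned sphere --- plus the truncated central core, all of whose vertices lie in $\mathcal{S}_0$. The single step you defer (exhibiting the auxiliary points and verifying the membership inequalities, e.g. $(v_1^+)^{(1)}=(1-\sqrt{2}i/2,1)$, $(v_1^+)^{(2)}=(1-i/2,\sqrt{2})$, $(v_1^+)^{(3)}=(\sqrt{2}/2-i/2,\sqrt{2})$ for the corner at $v_1^+$, found via the edge-interval computations such as $\sqrt{2}-\sqrt{3}/2<t<\sqrt{7}/2$) is precisely the body of the paper's proof, so your outline is correct but unexecuted exactly where you acknowledge.
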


\begin{proof} It suffices to show there exists three points $(v_1^+)^{(j)}$ $(j=1,2,3)$ on the edges $(v_1^+,v_1^-),$ $(v_1^+,v_2^+)$ and $(v_1^+,v_3^+)$
which lie in the intersection of the interiors of $\mathcal{S}_0$ and $\mathcal{S}_1$ such that the tetrahedron $\mathbb{T}(v_1^+)$ with vertices
 $v_1^+,$ $(v_1^+)^{(1)},$ $(v_1^+)^{(2)},$ $(v_1^+)^{(3)}$ lies inside $\mathcal{S}_1$. By the same argument,
we can also obtain other five tetrahedra $\mathbb{T}(v_2^+),$ $\mathbb {T}(v_3^+),$ $\mathbb{T}(v_1^-),$ $\mathbb {T}(v_2^-),$ $\mathbb{T}(v_3^-)$
with apex $v_2^+,$ $v_3^+,$ $v_1^-,$ $v_2^-,$ $v_3^-$ respectively such that
 $\mathbb {T}(v_2^+)\in Int(T^{(2)}(\mathcal{S}_1)),$ $\mathbb {T}(v_3^+)\in Int(R^{(2)}_1(\mathcal{S}_1)),$ $\mathbb {T}(v_1^-)\in Int({T^{(2)}}^{-1}(\mathcal{S}_1)),$
  $\mathbb {T}(v_2^-)\in Int(\mathcal{S}_1)$ and $\mathbb {T}(v_3^-)\in Int({T^{(2)}}^{-1}R^{(2)}_1(\mathcal{S}_1))$. Moreover, the core part obtained by cutting off six the tetrahedra from the prism lies inside $\mathcal{S}_0$.

We shall prove the existence of the tetrahedron $\mathbb {T}(v_1^+)$ and the others follow similarly. The edge joining $v_1^+$ and $v_1^-$ is contained the complex line $\zeta=1-\sqrt{2}i/2$ which is given by points with Heisenberg coordinates
$$\zeta=1-\sqrt{2}i/2,\quad-\sqrt{2}\leq t\leq\sqrt{2}.$$
The edge joining $v_1^+$ and $v_2^+$ is given by points with Heisenberg coordinates
$$\Re e\zeta=1,\quad-\sqrt{2}/2\leq\Im m\zeta\leq\sqrt{2}/2,\quad t=\sqrt{2}.$$
The edge joining $v_1^+$ and $v_3^+$ is given by points with Heisenberg coordinates
$$\Re e\zeta=-\sqrt{2}\Im m\zeta,\quad t=\sqrt{2}.$$
From (\ref{eq:5-1}) and (\ref{eq:5-2}), the points on the edge $(v^+_1,v^-_1)$ lie in the intersection of the interiors of $\mathcal{S}_0$ and $\mathcal{S}_1$ if and only if
\begin{equation}\label{eq:5-3}
\left|3/2+it\right|<2\quad\text{and}\quad\left|1/2-(t-\sqrt{2})i\right|<1.
\end{equation}
By an easy calculation, the inequalities (\ref{eq:5-3}) are equivalent to $$\sqrt{2}-\sqrt{3}/2<t<\sqrt{7}/2.$$

Using the same argument as above, we obtain that the points on the edge $(v^+_1,v^+_2)$ lie in the intersection of the interiors of $\mathcal{S}_0$ and $\mathcal{S}_1$ if and only if $\Re e\zeta=1,$ and $-\sqrt{\sqrt{2}-1}<\Im m\zeta<\delta_1$, where $\delta_1\approx-0.208$ is the largest real root of the equation $x^4+4x^2+4\sqrt{2}x+1=0$. The points on the edge $(v^+_1,v^+_3)$ lie in the intersection of the interiors of $\mathcal{S}_0$ and $\mathcal{S}_1$ if and only if $\Re e\zeta=-\sqrt{2}\Im m\zeta$ and $-2^{1/4}/\sqrt{3}<\Im m\zeta<\delta_2$, where $\delta_2\approx-0.264$ is the largest real root of the equation $9x^4+12\sqrt{2}x^3+18x^2+8\sqrt{2}x+2=0$.

In term of these, we choose three points as $(v_1^+)^{(1)}=(1-\sqrt{2}i/2,1)$ on the edge $(v^+_1,v^-_1)$, $(v_1^+)^{(2)}=(1-i/2,\sqrt{2})$ on the edge $(v^+_1,v^+_2)$ and $(v_1^+)^{(3)}=(\sqrt{2}/2-i/2,\sqrt{2})$ on the edge $(v^+_1,v^+_3)$, which are inside the intersection of the interiors of $\mathcal{S}_0$, $\mathcal{S}_1$ and also the vertex $v_1^+$ lies inside $\mathcal{S}_1$. Therefore the tetrahedron $\mathbb {T}(v_1^+)$ with the vertices $v_1^+,$ $(v_1^+)^{(1)},$ $(v_1^+)^{(2)},$ $(v_1^+)^{(3)}$ lies inside $\mathcal{S}_1$ by Lemma 4.1.
\end{proof}

\subsection{The case $\mathcal{O}_7$}

In this case, the distance between the top and base of the fundamental domain for the
stabiliser $(\Gamma_{7})_\infty$ is greater than the diameter of $\mathcal{S}_0$, which implies that the prism $\mathbf{\Sigma}_7$ can not be
contained inside $\mathcal{S}_0$ completely. Due to increasing the length of Heisenberg translations, only the images of $\mathcal{S}_0$ under the elements in $(\Gamma_7)_\infty$ could not cover the whole prism. We show that there are also isometric spheres with Cygan radius smaller than $\sqrt{2}$ whose centres are near to origin.

Therefore we consider the map
$$Q = I_0R^{(7)}_2I_0=\left[\begin{array}{ccc}
1&0&0\\
1&1&0\\
\bar{\omega}_7&1&1
\end{array}\right].
$$
Consider the isometric spheres of $Q$ and $Q^{-1}$, which we denote by $\mathcal{B}_2$ and $\mathcal{B}_3$, respectively.
The centre of $\mathcal{B}_2$ is $Q^{-1}(\infty)$, which is the point with horospherical coordinates
$(1/4+i\sqrt{7}/4,\sqrt{7}/2,0)$ and the centre of $\mathcal{B}_3$, is $Q(\infty)$ which has horospherical coordinates $(1/4-i\sqrt{7}/4,\sqrt{7}/2,0)$. Both these isometric spheres have Cygan radius $\sqrt{2/|\omega_7|}=2^{1/4}$. The boundaries of these isometric spheres $\mathcal{B}_2$ and $\mathcal{B}_3$ are in Heisenberg coordinates given by
\begin{align}\label{eq:5-4}
\mathcal{S}_2&=\left\{(\zeta,t):\left|\left|\zeta-\omega_7/2\right|^2+it+i\sqrt{7}/2+i\Im m(\bar{\omega}_7\zeta)\right|=\sqrt{2}\right\},\\ \label{eq:5-5}
\mathcal{S}_3&=\left\{(\zeta,t):\left|\left|\zeta-\bar{\omega}_7/2\right|^2+it-i\sqrt{7}/2+i\Im m(\omega_7\zeta)\right|=\sqrt{2}\right\}.
\end{align}

In order to cover the prim $\mathbf{\Sigma}_7$ by the spinal spheres, we use the symmetry property of $R^{(7)}_1$, it suffice to consider $\mathcal{S}_0,\mathcal{S}_2$ and images of $\mathcal{S}_0$ and $\mathcal{S}_3$ under suitable elements in $(\Gamma_7)_\infty$, these are points with Heisenberg coordinates given by
\begin{align*}
R^{(7)}_2(\mathcal{S}_0)&=\left\{(\zeta,t):\left||\zeta-1|^2+it-i\sqrt{7}+2i\Im m\zeta\right|=4\right\},\\
{R^{(7)}_2}^{-1}(\mathcal{S}_0)&=\left\{(\zeta,t):\left||\zeta-1|^2+it+i\sqrt{7}+2i\Im m\zeta\right|=4\right\},\\
{R^{(7)}_2}^{-1}(\mathcal{S}_3)&=\left\{(\zeta,t):\left|\left|\zeta-(1+\omega_7)/2\right|^2+it+i\sqrt{7}\right.\right.\\
&\hspace{5cm}\left.\left.+i\Im m((1+\bar{\omega}_7)\zeta)\right|=\sqrt{2}\right\},\\
R^{(7)}_3R^{(7)}_2(\mathcal{S}_3)&=\left\{(\zeta,t):\left|\left|\zeta+\bar{\omega}_7/2\right|^2+it-i\sqrt{7}/2-i\Im m(\omega_7\zeta)\right|=\sqrt{2}\right\},\\
R^{(7)}_1R^{(7)}_3R^{(7)}_2(\mathcal{S}_3)&=\left\{(\zeta,t):\left|\left|\zeta-\bar{\omega}_7/2\right|^2+it-i\sqrt{7}/2+i\Im m(\omega_7\zeta)\right|=\sqrt{2}\right\}.
\end{align*}

We claim that the prism $\mathbf{\Sigma}_7$ lies inside the union of $\mathcal{S}_0,$ $\mathcal{S}_2$ and these
 images $R^{(7)}_2(\mathcal{S}_0),$ ${R^{(7)}_2}^{-1}(\mathcal{S}_0),$ ${R^{(7)}_2}^{-1}(\mathcal{S}_3)$, $R^{(7)}_3R^{(7)}_2(\mathcal{S}_3),$ $R^{(7)}_1R^{(7)}_3R^{(7)}_2(\mathcal{S}_3)$, see Figure 5.2 for viewing these spinal spheres.

\begin{figure}
\centering
  \subfigure[]{
    \label{fig:mini:subfig:1}
 \begin{minipage}[1]{0.42\textwidth} \centering
      \includegraphics[width=2in]{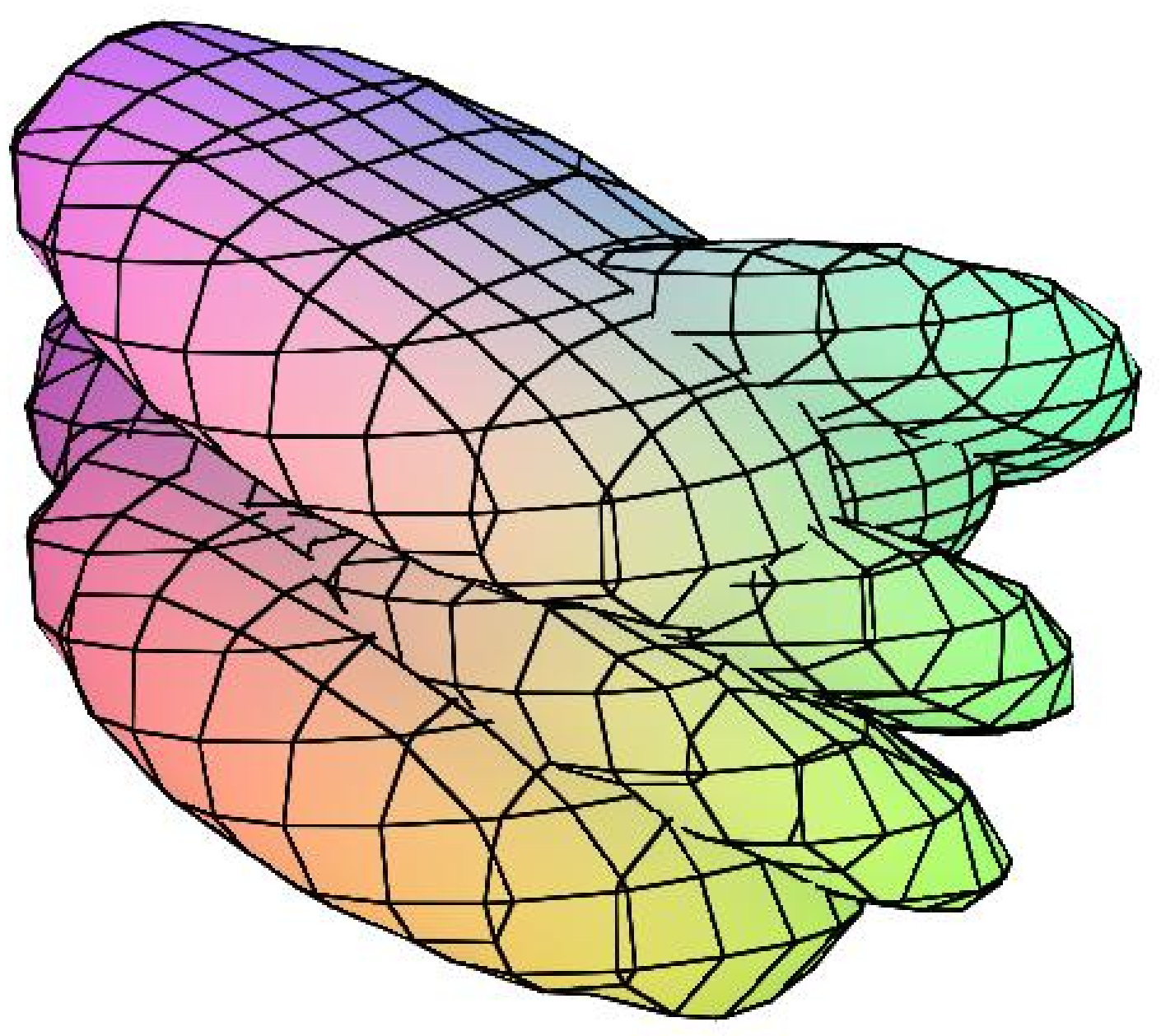}
  \end{minipage}
}%
  \subfigure[]{
    \label{fig:mini:subfig:2} 
    \begin{minipage}[3]{0.42\textwidth}
      \centering
      \includegraphics[width=2in]{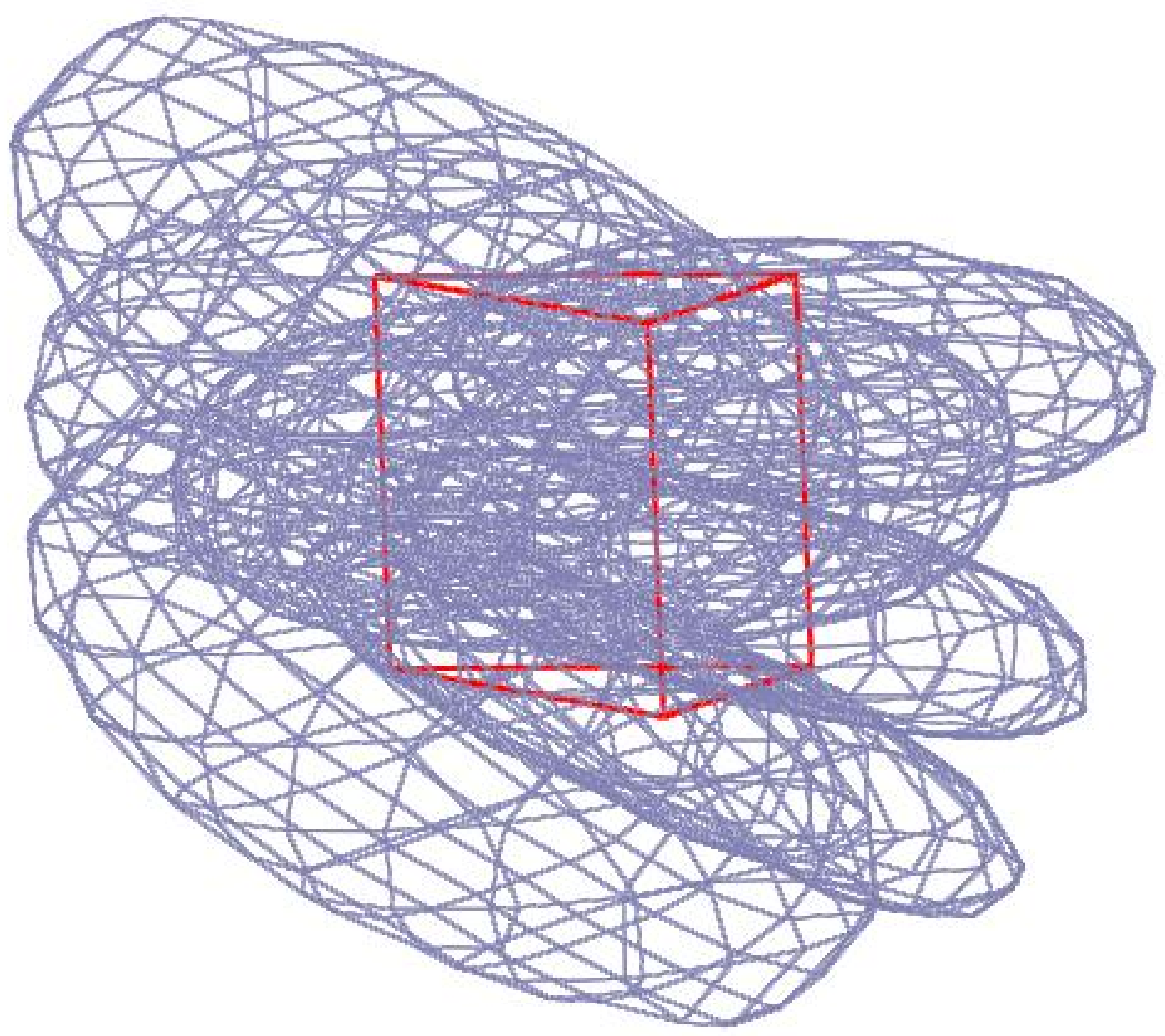}
    \end{minipage}
}
\medskip
\makeatletter\def\@captype{figure}\makeatother
\bigskip
\caption{(a)\ The shading view of neighboring spinal spheres containing the fundamental domain for $(\Gamma_7)_\infty$. (b)\ Another view for these spinal spheres.}
  \label{fig:mini:subfig} 
\end{figure}

\begin{proposition}\label{eq:5.2}The prism $\mathbf{\Sigma}_7$ is contained in the union of the interiors of the spinal spheres
 $\mathcal{S}_0,$ $\mathcal{S}_2,$ $R^{(7)}_2(\mathcal{S}_0),$ ${R^{(7)}_2}^{-1}(\mathcal{S}_0),$ ${R^{(7)}_2}^{-1}(\mathcal{S}_3),$ $R^{(7)}_3R^{(7)}_2(\mathcal{S}_3)$  and $R^{(7)}_1R^{(7)}_3R^{(7)}_2(\mathcal{S}_3)$.
\end{proposition}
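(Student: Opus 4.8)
The plan is to mimic the proof of Proposition~5.1 for the $\mathcal{O}_2$ case, but now adapted to the more complicated combinatorics of the prism $\mathbf{\Sigma}_7$, whose vertical extent $2\sqrt{7}$ exceeds the diameter $2\sqrt{2}$ of $\mathcal{S}_0$. The idea is to decompose $\mathbf{\Sigma}_7$ into a finite union of affinely convex pieces, each of which can be shown to lie inside one of the seven listed spinal balls, using Lemma~4.1 (affine convexity of Cygan balls): if the vertices of a convex polyhedron all lie in the interior of a fixed Cygan ball, so does the polyhedron. So the proof reduces to a purely combinatorial bookkeeping of which vertices, edge-midpoints, and auxiliary interior points lie inside which spinal sphere.

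First I would locate the vertices $v_1^{\pm},v_2^{\pm},v_4^{\pm}$, the edge midpoints $v_3^{\pm},v_5^{\pm}$, and the auxiliary points $w_1^+,w_2^-,w_3^+$ of $\mathbf{\Sigma}_7$ (as listed after Proposition~3.4) and check, for each of the seven spinal spheres $\mathcal{S}_0,\mathcal{S}_2,R^{(7)}_2(\mathcal{S}_0),{R^{(7)}_2}^{-1}(\mathcal{S}_0),{R^{(7)}_2}^{-1}(\mathcal{S}_3),R^{(7)}_3R^{(7)}_2(\mathcal{S}_3),R^{(7)}_1R^{(7)}_3R^{(7)}_2(\mathcal{S}_3)$, which of these marked points it contains in its interior; each containment is a single inequality of the form $\bigl||\zeta-\zeta_0|^2+it-it_0+2i\Im m(\zeta\bar\zeta_0)\bigr|<r^2$. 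The symmetry under $R^{(7)}_1$ (which swaps the two vertical edges over $\zeta=\pm(1-i\sqrt7)/4$ and fixes the edge over $\zeta=\pm(-1+i\sqrt7)/4$ appropriately, cf. the side-pairing description) halves the work: it suffices to handle one ``half'' of the prism and transport the conclusion. The role of $R^{(7)}_2$-translates of $\mathcal{S}_0$ (radius~$4$, i.e. the images under the parabolic that also translates vertically by $\sqrt7$) is to cover the tall middle band of the prism that $\mathcal{S}_0$ alone misses, while $\mathcal{S}_2$ and the $\mathcal{S}_3$-images (radius~$\sqrt2$, centres near the top and bottom edges over $\zeta=(1\pm i\sqrt7)/4$ and $\zeta=(1+\omega_7)/2$) cap off the regions near the prism's vertices.

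Concretely, as in Proposition~5.1 I would, for each vertex $v$ of the prism that lies outside $\mathcal{S}_0$, exhibit a ``satellite'' spinal sphere $\mathcal{S}(v)$ from the list containing $v$, together with points on each of the three edges of the prism emanating from $v$ that lie simultaneously inside $\mathcal{S}_0$ and inside $\mathcal{S}(v)$; convexity (Lemma~4.1) then gives a tetrahedron $\mathbb{T}(v)$ with apex $v$ contained in $\mathcal{S}(v)$. One checks the existence of the required edge points by restricting the two defining inequalities to an explicit affine parametrisation of each edge — exactly the computation $\bigl|3/2+it\bigr|<2$ and $\bigl|1/2-(t-\sqrt2)i\bigr|<1$ played in the $\mathcal{O}_2$ case — and verifying the resulting interval of parameters is nonempty. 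After removing all such tetrahedra (and, where needed, a similar wedge near a whole tall edge handled by the radius-$4$ balls), the remaining ``core'' is a convex polyhedron all of whose vertices are verified to lie inside $\mathcal{S}_0$, hence the core lies in $\mathcal{S}_0$ by Lemma~4.1. Since the finitely many convex pieces exhaust $\mathbf{\Sigma}_7$, the proposition follows.

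The main obstacle I anticipate is the bookkeeping: unlike the $\mathcal{O}_2$ prism, $\mathbf{\Sigma}_7$ is genuinely too tall to be swept by $\mathcal{S}_0$ and a small symmetric cluster, so I must arrange the decomposition so that the tall middle strip over each horizontal slice is certifiably covered by the two radius-$4$ balls $R^{(7)}_2(\mathcal{S}_0)$ and ${R^{(7)}_2}^{-1}(\mathcal{S}_0)$ together with $\mathcal{S}_0$ — this requires checking that the ``gaps'' left over the three vertical edges at intermediate heights are filled by the radius-$\sqrt2$ balls $\mathcal{S}_2$ and the $\mathcal{S}_3$-images, and that the whole collection overlaps with no missed corner. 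Getting the auxiliary points (the $(v)^{(j)}$'s and the $w_i$'s) placed so that every piece is both convex and certifiably inside a single ball is the delicate part; once the right decomposition is written down, each individual containment reduces to an elementary inequality on a line segment, just as in Proposition~5.1, and can be verified by a short direct computation (occasionally by solving for the largest real root of a quartic, as in the $\delta_1,\delta_2$ computations there).
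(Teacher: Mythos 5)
Your proposal follows essentially the same approach as the paper: decompose $\mathbf{\Sigma}_7$ into finitely many affinely convex polyhedra, certify that the vertices of each piece lie in the interior of one of the seven listed Cygan balls, and invoke Lemma 4.1. The paper's actual decomposition is somewhat more elaborate than the vertex-tetrahedron-plus-convex-core template you borrow from Proposition 5.1 --- it introduces sixteen auxiliary points and cuts the prism into seven polyhedra (a tetrahedron, two hexahedra, three pentahedra and a central octahedron assigned to $\mathcal{S}_0$), with some corners of the prism shared between two pieces --- but this is precisely the combinatorial bookkeeping you anticipate, not a different idea.
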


\begin{proof}It suffice to show that the prism $\mathbf{\Sigma}_7$ can be decomposed into several pieces as polyhedra such that each polyhedron lies inside a spinal sphere which is described in the proposition and the common face of two adjacent polyhedra lie in the intersection of the interior of two spinal spheres which contain these two polyhedra.

We need to add sixteen points on the faces of the prism $\mathbf{\Sigma}_7$ in order to decompose the prim into seven polyhedra, in Heisenberg coordinates, these are given by
$$
\begin{array}{llll}
p_1=(1/4-i\sqrt{7}/4,3/2),&p_2=(0.11-i11\sqrt{7}/100,1.44+\sqrt{7}/50),\\
p_3=(1/2,8/5),&p_4=(-1/10+i\sqrt{7}/10,\sqrt{7}),\\
p_5=(-1/10+i\sqrt{7}/10,\sqrt{7}/2),&p_6=(3/4+i\sqrt{7}/4,1.7),\\
p_7=(-1/4+i\sqrt{7}/4,1),&p_8=(1/4-i\sqrt{7}/4,-1),\\
p_9=(1/60-i\sqrt{7}/60,-2\sqrt{7}/3),&p_{10}=(-1/20+i\sqrt{7}/20,-\sqrt{7}),\\
p_{11}=(3/5+i\sqrt{7}/10,-2\sqrt{7}/3),&p_{12}=(7/10+i\sqrt{7}/5,-\sqrt{7}),\\
p_{13}=(3/4+i\sqrt{7}/4,-2\sqrt{7}/3),&p_{14}=(5/12+i\sqrt{7}/4,-2\sqrt{7}/3),\\
p_{15}=(1/4+i\sqrt{7}/4,-\sqrt{7}),&p_{16}=(-1/4+i\sqrt{7}/4,-1).
\end{array}$$

\begin{figure}
\centering
\psfrag{T}{\Large$\mathbb{T}$}\psfrag{P1}{\Large$\mathbb{P}_1$}\psfrag{P2}{\Large$\mathbb{P}_2$}
\psfrag{P3}{\Large$\mathbb{P}_3$}\psfrag{H1}{\Large$\mathbb{H}_1$}\psfrag{H2}{\Large$\mathbb{H}_2$}
\psfrag{O}{\Large$\mathbb{O}$}
\psfrag{v1+}{\Large$v_1^+$}\psfrag{v2+}{\Large$v_2^+$}\psfrag{v4+}{\Large$v_4^+$}
\psfrag{v1-}{\Large$v_1^-$}\psfrag{v2-}{\Large$v_2^-$}\psfrag{v4-}{\Large$v_4^-$}
\psfrag{p1}{\Large$p_1$}\psfrag{p2}{\Large$p_2$}\psfrag{p3}{\Large$p_3$}\psfrag{p4}{\Large$p_4$}
\psfrag{p5}{\Large$p_5$}\psfrag{p6}{\Large$p_6$}\psfrag{p7}{\Large$p_7$}\psfrag{p8}{\Large$p_8$}
\psfrag{p9}{\Large$p_9$}\psfrag{p10}{\Large$p_{10}$}\psfrag{p11}{\Large$p_{11}$}\psfrag{p12}{\Large$p_{12}$}
\psfrag{p13}{\Large$p_{13}$}
\psfrag{p14}{\Large$p_{14}$}
\psfrag{p15}{\Large$p_{15}$}
\psfrag{p16}{\Large$p_{16}$}
\resizebox{2in}{!}{\includegraphics{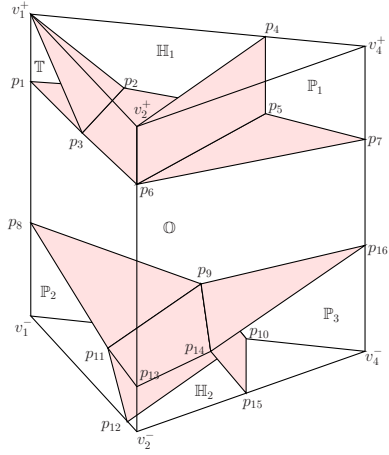}}
\medskip
\makeatletter\def\@captype{figure}\makeatother
\caption{A view of the decomposition for the prism $\mathbf{\Sigma}_7$ as several polyhedra.}
\end{figure}

We describe these polyhedra as follows:

(i)\ The tetrahedron $\mathbb{T}$ with the vertice $v^+_1,$ $p_1,$ $p_2,$ $p_3$;

(ii)\ The hexahedron $\mathbb{H}_1$ with the vertice $v^+_1,$ $v_2^+,$ $p_2,$ $p_3,$ $p_4,$ $p_5,$ $p_6$;

(iii)\ The pentahedron $\mathbb{P}_1$ with the vertice $v_2^+,$ $p_4,$ $p_5,$ $p_6,$ $v^+_4,$ $p_7$;

(iv)\ The pentahedron $\mathbb{P}_2$ with the vertice  $v_1^-,$ $p_8,$ $p_9,$ $p_{10},$ $p_{11},$ $p_{12}$;

(v)\ The hexahedron $\mathbb{H}_2$ with the vertice $p_9,$ $p_{10},$ $p_{11},$ $p_{12},$ $p_{13},$ $v_2^-,$ $p_{14},$ $p_{15}$;

(vi)\ The pentahedron $\mathbb{P}_3$ with the vertice $p_9,$ $p_{10},$ $p_{14},$ $p_{15},$ $p_{16},$ $v_4^+$;

(vii)\ The octahedron $\mathbb{O}$ with the vertice $p_1,$ $p_2,$ $p_3,$ $p_5,$ $p_6,$ $p_7,$ $p_8,$ $p_9,$ $p_{11},$ $p_{13},$ $p_{14},$ $p_{16}$.
                                                      
Note that the face $(p_1,p_2,p_3)$ of $\mathbb{T}$ and the face $(p_2,p_3,p_5,p_6)$ of $\mathbb{H}_1$ are on the face $(p_1,p_5,p_6)$ of $\mathbb{O}$; the common face $(v^+_2,p_4,p_5,p_6)$ of $\mathbb{H}_1$ and $\mathbb{P}_1$ is a vertical plane; the face $(p_9,p_{11},p_{13},p_{14})$ of $\mathbb{H}_2$ is parallel to the base of the prism. Furthermore, the faces $(p_9,p_{10},p_{11},p_{12})$ and $(p_9,p_{10},p_{14},p_{15})$ are the trapeziums since 
the edge $(p_9,p_{11})$ is parallel to $(p_{10},p_{12})$ and the edge $(p_9,p_{14})$ is parallel to $(p_{10},p_{15})$.

By examining the location of the points and applying Lemma 4.1, we conclude that the tetrahedron $\mathbb{T}$ is inside the spinal sphere
 $R^{(7)}_1R^{(7)}_3R^{(7)}_2(\mathcal{S}_3)$; the hexahedron $\mathbb{H}_1$ is contained inside the spinal sphere $R^{(7)}_2(\mathcal{S}_0)$; the pentahedron $\mathbb{P}_1$ is inside  $R^{(7)}_3R^{(7)}_2(\mathcal{S}_3)$; the pentahedron $\mathbb{P}_2$ is contained inside
 ${R^{(7)}_2}^{-1}(\mathcal{S}_0)$; the hexahedron $\mathbb{H}_2$ is contained inside ${R^{(7)}_2}^{-1}(\mathcal{S}_3)$;
  the pentahedron $\mathbb{P}_3$ is inside $\mathcal{S}_2$;  the remaining octahedron $\mathbb{O}$ is inside $\mathcal{S}_0$; see Figure 5.3 for viewing the decomposition of the prism.
\end{proof}

\subsection{The case $\mathcal{O}_{11}$}

In this case, we know that the fundamental domain for the stabiliser
$(\Gamma_{11})_\infty$ cannot be still inside $\mathcal{S}_0$ completely. The radius of spinal spheres other than the largest are so small that these spinal spheres are not much contribution to covering the prism $\mathbf{\Sigma}_{11}$. Due to the different shape of the prism $\mathbf{\Sigma}_{11}$ with the case $\mathcal{O}_7$, we only need to consider the largest spinal spheres which are the images of $\mathcal{S}_0$ under the elements of $(\Gamma_{11})_\infty$. In order to determine a union of the spinal spheres which covering the prim $\mathbf{\Sigma}_{11}$,  we minimise their numbers by the symmetry of $R^{(11)}_1$, it suffice to consider $\mathcal{S}_0$ and the images of $\mathcal{S}_0$ under suitable elements in $(\Gamma_{11})_\infty$, these are in Heisenberg coordinates given by
\begin{eqnarray*}
T^{(11)}(\mathcal{S}_0)&=&\left\{(\zeta,t):\left||\zeta|^2+it-2i\sqrt{11}+2i\Im m\zeta\right|=4\right\},\\
R^{(11)}_2(\mathcal{S}_0)&=&\left\{(\zeta,t):\left||\zeta-1|^2+it-i\sqrt{11}+2i\Im m\zeta\right|=4\right\},\\
R^{(11)}_1R^{(11)}_2(\mathcal{S}_0)&=&\left\{(\zeta,t):\left||\zeta+1|^2+it-i\sqrt{11}-2i\Im m\zeta\right|=4\right\},\\
{R^{(11)}_2}^{-1}(\mathcal{S}_0)&=&\left\{(\zeta,t):\left||\zeta-1|^2+it+i\sqrt{11}+2i\Im m\zeta\right|=4\right\},\\
R^{(11)}_3(\mathcal{S}_0)&=&\left\{(\zeta,t):\left|\left|\zeta-\omega_{11}\right|^2+it-i\sqrt{11}-2i\Im m(\bar{\omega}_{11}\zeta)\right|=4\right\},\\
{R^{(11)}_3}^{-1}(\mathcal{S}_0)&=&\left\{(\zeta,t):\left|\left|\zeta-\omega_{11}\right|^2+it+i\sqrt{11}-2i\Im m(\bar{\omega}_{11}\zeta)\right|=4\right\},\\
R^{(11)}_3R^{(11)}_2(\mathcal{S}_0)&=&\left\{(\zeta,t):\left|\left|\zeta+\bar{\omega}_{11}\right|^2+it-i\sqrt{11}-2i\Im m(\omega_{11}\zeta)\right|=4\right\},\\
R^{(11)}_1R^{(11)}_3R^{(11)}_2(\mathcal{S}_0)&=&\left\{(\zeta,t):\left||\zeta-\bar{\omega}_{11}|^2+it-i\sqrt{11}+2i\Im m(\omega_{11}\zeta)\right|=4\right\},\\
R^{(11)}_1{R^{(11)}_3}^{-1}R^{(11)}_2(\mathcal{S}_0)&=&\left\{(\zeta,t):\left||\zeta-\bar{\omega}_{11}|^2+it+i\sqrt{11}+2i\Im m(\omega_{11}\zeta)\right|=4\right\}.
\end{eqnarray*}

\begin{definition}
Let $X$ be a closed polygonal chain (not necessarily in a plane), then a topological disk defined by the cone over $X$ with apex $v$ is called a \textbf{cone-polygon}, denoted by $\mathbb{D}_v(X)$.
\end{definition}

Note that a polygon in traditional sense can be interpreted as a cone-polygon, in that case, the boundary of cone-polygon and the apex lie in the same plane and moreover the apex is in the interior of the boundary. We claim that the prism $\mathbf{\Sigma}_{11}$ lies inside the union of $\mathcal{S}_0$ and its images as above, see Figure 5.4 for viewing these spinal spheres.

\begin{figure}
\centering
  \subfigure[]{
    \label{fig:mini:subfig:1}
 \begin{minipage}[1]{0.4\textwidth} \centering
      \includegraphics[width=2in]{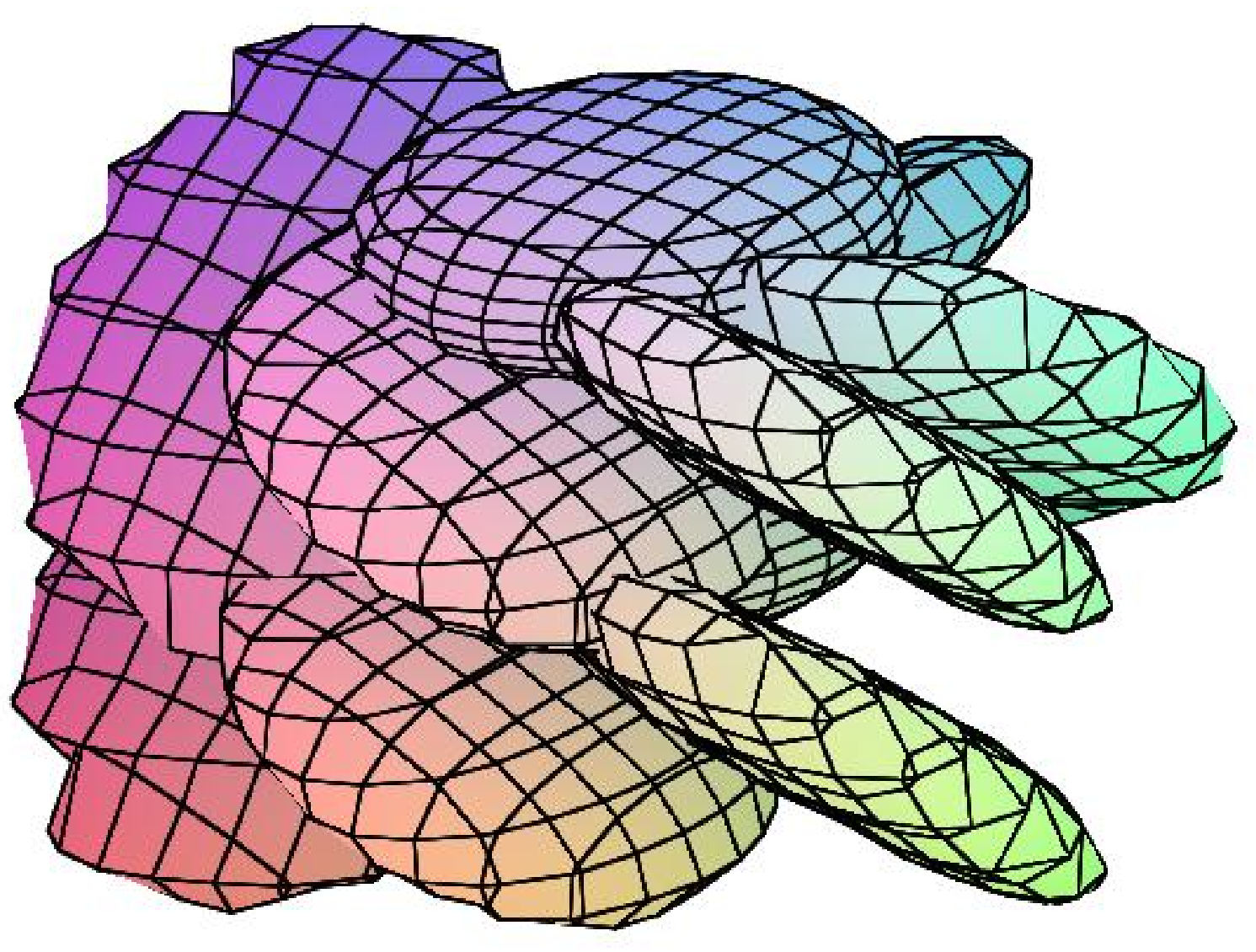}
  \end{minipage}
}%
  \subfigure[]{
    \label{fig:mini:subfig:2} 
    \begin{minipage}[3]{0.4\textwidth}
      \centering
      \includegraphics[width=2in]{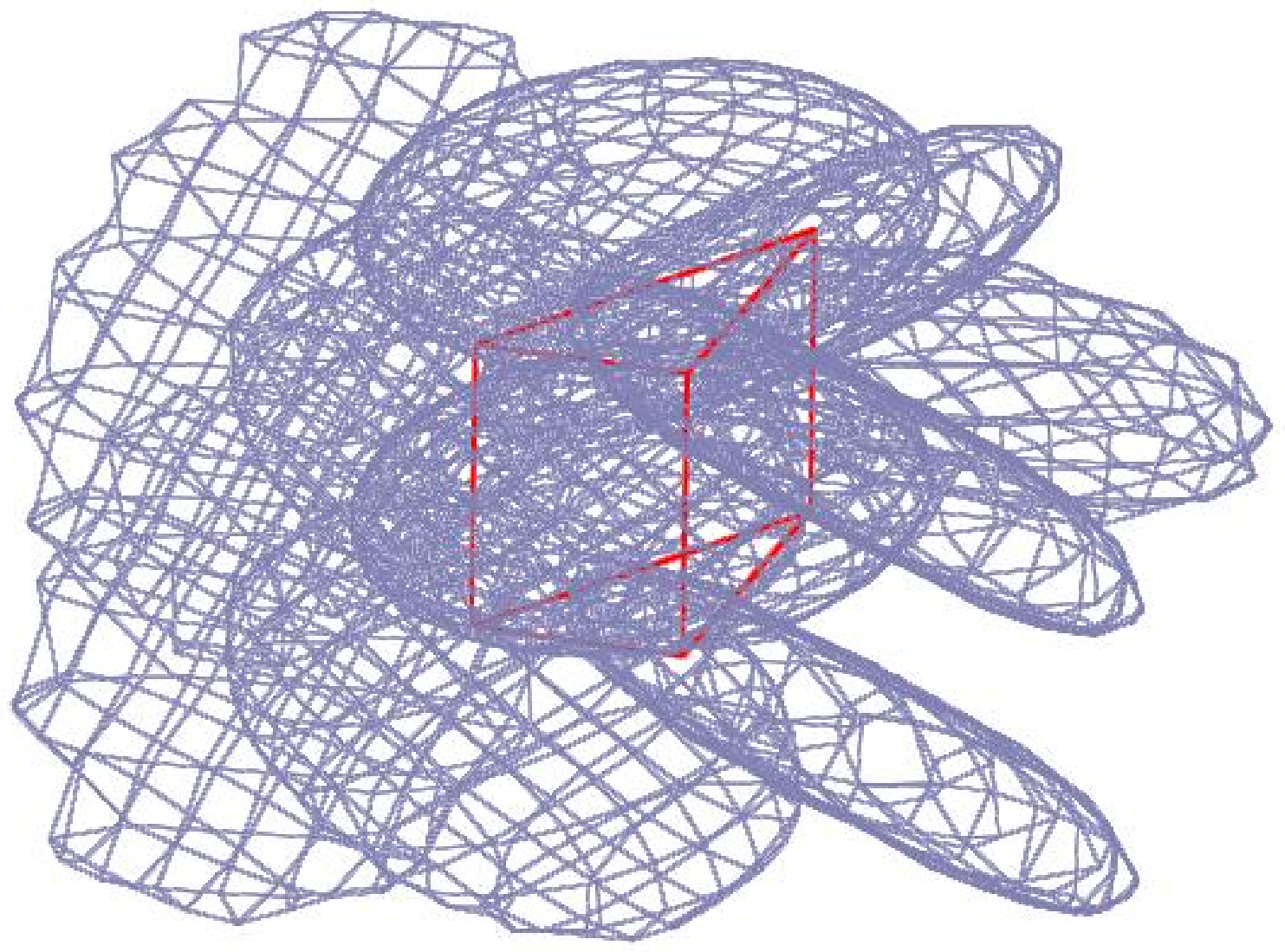}
    \end{minipage}
}
\medskip
\makeatletter\def\@captype{figure}\makeatother
\bigskip
\caption{(a)\ The shading view of neighboring spinal spheres containing the fundamental domain for $(\Gamma_{11})_\infty$. (b)\ Another view for these spinal spheres.}
  \label{fig:mini:subfig} 
\end{figure}

\begin{proposition}The prism $\mathbf{\Sigma}_{11}$ is contained in the union of the interiors of the spinal spheres $\mathcal{S}_0,$ $T^{(11)}(\mathcal{S}_0),$  $R^{(11)}_2(\mathcal{S}_0),$ ${R^{(11)}_2}^{-1}(\mathcal{S}_0),$ $R^{(11)}_3(\mathcal{S}_0),$ ${R^{(11)}_3}^{-1}(\mathcal{S}_0),$  $R^{(11)}_1{R^{(11)}_3}^{-1}R^{(11)}_2(\mathcal{S}_0)$, $R^{(11)}_1R^{(11)}_3R^{(11)}_2(\mathcal{S}_0)$, $R^{(11)}_1R^{(11)}_2(\mathcal{S}_0)$ and $R^{(11)}_3R^{(11)}_2(\mathcal{S}_0)$.
\end{proposition}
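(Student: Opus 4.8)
The plan is to follow the template of the two preceding propositions: cut the prism $\mathbf{\Sigma}_{11}$ into finitely many polyhedral pieces, each contained in the convex hull of its vertices, and show that every piece is swallowed by one of the ten spinal spheres listed. The engine is the affine convexity of Cygan balls (Lemma 4.1): a piece all of whose vertices lie in the open ball bounded by a spinal sphere $\mathcal{S}$ lies entirely in that ball. Hence each assertion ``this piece is inside that sphere'' reduces to a finite list of scalar inequalities, for a point $(\zeta,t)$ lies in the interior of a spinal sphere centred at $(\zeta_0,t_0)$ of Cygan radius $r$ exactly when $\bigl||\zeta-\zeta_0|^2+it-it_0+2i\Im m(\zeta\bar\zeta_0)\bigr|<r^2$, and each of the ten spheres occurring here is given by such an explicit expression.

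First I would fix the geometry of $\mathbf{\Sigma}_{11}$. It is the oblique triangular prism with the six vertices $v_1^\pm,v_2^\pm,v_3^\pm$ recorded in Section 3; since $R^{(11)}_2$ and $R^{(11)}_3$ are screw motions, its top cap sits at heights $\sqrt{11},\,3\sqrt{11}/2,\,2\sqrt{11}$ over the three vertical edges while the base sits at $-\sqrt{11},\,-\sqrt{11}/2,\,0$, so $\mathbf{\Sigma}_{11}$ is both tall and sheared and is not amenable to the simple ``vertex-simplices plus central core'' splitting that handles $\mathbf{\Sigma}_2$. I would record which vertices — and which portions of the five facets — protrude from $\mathcal{S}_0$, and note that the order-two symmetry $R^{(11)}_1:(\zeta,t)\mapsto(-\zeta,t)$ carries the whole configuration of ten spheres to itself, fixing $\mathcal{S}_0$ and $T^{(11)}(\mathcal{S}_0)$; this lets me treat only half of the subdivision explicitly and transport the rest by $R^{(11)}_1$.

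Next, exactly as in the $\mathcal{O}_7$ argument, I would introduce a modest number of auxiliary points on the edges and facets of $\mathbf{\Sigma}_{11}$ and use them to present the prism as a union of finitely many pieces, each built as a cone with apex at one of the protruding vertices $v$ over a cone-polygon base $\mathbb{D}_w(X)$ (in the sense of the definition above), plus one central piece: the cone at $v$ is arranged so that $v$ — and therefore, by affine convexity, the whole cone — lies in the unique sphere among the ten whose interior contains $v$, while the central piece is arranged to lie inside $\mathcal{S}_0$. For each piece I then run the finite vertex check of the first paragraph. To guarantee that the pieces genuinely exhaust $\mathbf{\Sigma}_{11}$ with no gap along their common boundaries, I would also check that every face shared by two adjacent pieces has all of its vertices — hence, by Lemma 4.1 again, all of itself — in the intersection of the two relevant interiors.

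The main obstacle is combinatorial rather than conceptual: one must choose the auxiliary points and the incidence pattern of the cones so that every piece really fits inside one of the ten prescribed balls, so that shared faces lie in the appropriate pairwise intersections, and so that the number of pieces stays small. Because $\mathbf{\Sigma}_{11}$ is taller and more sheared than the prisms treated in the $\mathcal{O}_2$ and $\mathcal{O}_7$ cases — its vertical extent far exceeds the diameter of $\mathcal{S}_0$ — several vertices lie outside many of the balls at once and the smaller spinal spheres are of little help, so the subdivision has to be steered with care and only the ten images of $\mathcal{S}_0$ above are used. Once a workable pattern is in hand, what remains is the routine, if lengthy, evaluation of the ten explicit inequalities at a bounded list of points, after which the affine convexity of Cygan balls finishes the proof.
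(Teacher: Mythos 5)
Your general framework --- subdivide $\mathbf{\Sigma}_{11}$ into polyhedral cells, check vertices against the explicit Cygan inequalities, and invoke the affine convexity of Cygan balls (Lemma 4.1) --- is the right one, and your use of the $R^{(11)}_1$ symmetry and of cone-polygons echoes the paper. But there is a genuine gap at the heart of your plan: you assume that the prism can be cut into a small number of cells \emph{each of which lies inside a single one} of the ten balls (a cone at each protruding vertex inside ``the unique sphere containing $v$'', plus a central core inside $\mathcal{S}_0$). This is exactly what the paper says cannot be arranged here: the intersection pattern of these ten balls is too complicated for a one-cell-per-ball decomposition of the kind that works for $\mathbf{\Sigma}_2$ and $\mathbf{\Sigma}_7$. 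You give no construction and no argument that such a decomposition exists, and you cannot appeal to a Lebesgue-number style refinement without already knowing the covering you are trying to prove.

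The paper's actual proof supplies the missing device. Besides a few cells that do sit in single balls (a tetrahedron in $R^{(11)}_1{R^{(11)}_3}^{-1}R^{(11)}_2(\mathcal{S}_0)$ and two hexahedra in ${R^{(11)}_2}^{-1}(\mathcal{S}_0)$ and ${R^{(11)}_3}^{-1}(\mathcal{S}_0)$), it groups the remaining spheres into unions $\mathcal{U}_1$ (three balls) and $\mathcal{U}_2$ (four balls), each of which is \emph{star-convex} because the constituent balls share an interior point ($q_{25}$, resp.\ $q_{26}$); the cone-polygons $\mathbb{D}_{q_{25}}(X_1)$ and $\mathbb{D}_{q_{26}}(X_2)$ are coned from those common points, not from vertices of the prism, and serve as interfaces between cells $\mathbb{P}_1\subset\mathcal{U}_1$, $\mathbb{P}_2\subset\mathcal{U}_2$ and $\mathbb{P}_3\subset\mathcal{S}_0$. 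Note that affine convexity of a single ball does not let you conclude that a polyhedron whose vertices are scattered among several balls lies in their union; star-convexity about a common interior point is what makes the containments $\mathbb{P}_i\subset\mathcal{U}_j$ checkable from boundary data. Without this idea (or a much finer, explicitly verified subdivision), your proposal does not get past the specific difficulty of the $\mathcal{O}_{11}$ case.
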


\begin{proof}
Using the same argument of Proposition \ref{eq:5.2}, we want to decompose the prism $\mathbf{\Sigma}_{11}$ into several
polyhedral cells. The difference is the complicated intersection of the spinal spheres, which leads that the prism is difficultly decomposed into several polyhedral cells each of which is contained in one spinal sphere. Observe that a union of interiors of several spinal spheres is
a star-convex set if they have a non-empty interior intersection. We shall show that the collection of these spinal spheres can be separated into several parts such that each part contains certain polyhedral cell. All these polyhedral cells are defined by the star-disk as its boundary.

We first define a tetrahedron $\mathbb{T}$ with vertices $v_1^-,q_1,q_2,q_3$, where
$$\begin{array}{l}
q_1=\left(1/4-i\sqrt{11}/4,-2\sqrt{11}/3\right),\\
q_2=\left(3/20-3i\sqrt{11}/20,-4\sqrt{11}/5\right),\\
q_3=\left(7/20-3i\sqrt{11}/20,-9\sqrt{11}/10\right).
\end{array}$$
Observe that the points $q_1,q_2,q_3$ lie on the edges $(v^+_1,v^-_1),$ $(v^-_1,v_3^-)$ and $(v_1^-,v_2^-)$, respectively.
An easy calculation shows that this tetrahedron is contained inside $R^{(11)}_1{R^{(11)}_3}^{-1}R^{(11)}_2(\mathcal{S}_0)$ by Lemma 4.1.

Next, we define a hexahedron $\mathbb{H}_1$ with vertices $q_1,q_2,q_3,q_4,q_5,q_6,q_7,v^+_0$ and another hexahedron $\mathbb{H}_2$ with vertices $v_2^-,q_5,q_6,q_7,q_8,q_9$, where
$$
\begin{array}{llll}
q_4=\left(1/4-i\sqrt{11}/4,-1/2\right),&q_5=\left(0.42+0.26i,-0.71\sqrt{11}+0.39\right),\\
q_6=\left(0.6+i\sqrt{11}/10,-0.65\sqrt{11}\right),&q_7=\left(0.58+2i\sqrt{11}/25,-1.92\right),\\
q_8=\left(3/4+i\sqrt{11}/4,0\right),&q_9=(0.55+i\sqrt{11}/4,-2\sqrt{11}/5).
\end{array}$$

Observe that the points $q_4,q_6,q_8,q_9$ lie on the edges $(v^+_1,v^-_1),$ $(v^-_1,v_2^-),$ $(v_2^+,v_2^-)$ and $(v^-_2,v^-_3)$, respectively.
The points $q_5$ lies on the interior of the base of the prism and $q_7$ lies on the interior of the face $(v^+_1,v^-_1,v^-_2,v^+_2)$. Then we know the hexahedron $\mathbb{H}_1$ has the faces $(q_1,q_2,q_3),$ $(q_1,q_3,q_6,q_7,q_4),$ $(q_1,q_2,v^+_0,q_4),$ $(q_4,q_5,v^-_0),$ $(q_4,q_5,q_7)$ and $(q_5,q_6,q_7)$ and the hexahedron $\mathbb{H}_2$ has the faces $(q_5,q_6,q_7),$ $(q_5,q_7,q_8),$ $(v^-_2,q_8,q_9),$ $(q_5,q_8,q_9)$ $(q_6,q_7,q_8,v^-_2)$ and $(q_5,q_6,v^-_2,q_9)$. By examining the location of these points and Lemma 4.1, we conclude that the hexahedron $\mathbb{H}_1$ is contained inside ${R^{(11)}_2}^{-1}(\mathcal{S}_0)$ and the hexahedron $\mathbb{H}_2$ is lied inside ${R^{(11)}_3}^{-1}(\mathcal{S}_0)$.

We focus on describing other polyhedral cells in the decomposition of the prism $\mathbf{\Sigma}_{11}$. Let $\mathcal{U}_1$ denote the union of $R^{(11)}_2(\mathcal{S}_0),$ $ R^{(11)}_1R^{(11)}_2(\mathcal{S}_0)$ and $R^{(11)}_1R^{(11)}_3R^{(11)}_2(\mathcal{S}_0)$. We verify that $q_{10}=(0.2-0.4i,2.4)$ is in the intersection of the interiors of these three spinal spheres, which implies that $\mathcal{U}_1$ is a star-convex set about $q_{11}$. Analogously, we know $\mathcal{U}_2$, denoted by the union of $T^{(11)}(\mathcal{S}_0),$ $R^{(11)}_3(\mathcal{S}_0),$ $R^{(11)}_1R^{(11)}_2(\mathcal{S}_0)$ and $R^{(11)}_3R^{(11)}_2(\mathcal{S}_0)$, is a star-convex set about $q_{11}=(0.18+0.72i,4.8)$. This can be verified by examining the location of $q_{12}$ which is in the intersection of the interiors of these four spinal spheres.
We need to add the following points on the faces of the prism $\mathbf{\Sigma}_{11}$, each of which is in the intersection of the interiors of at least two spinal spheres.
$$ \begin{array}{ll}
q_{12}=(1/4-i\sqrt{11}/4,\sqrt{11}/2),&q_{13}=(0.21-0.21i\sqrt{11},\sqrt{11}/2),\\
q_{14}=(0,\sqrt{11}/2),&q_{15}=(-0.21+0.21i\sqrt{11},\sqrt{11}/2),\\
q_{16}=(i\sqrt{11}/4,1),&q_{17}=(3/4+i\sqrt{11}/4,1),\\
q_{18}=(0.42-2i\sqrt{11}/25,1.95),&q_{19}=(3/4+i\sqrt{11}/4,\sqrt{11}),\\
q_{20}=(0.6+i\sqrt{11}/10,27\sqrt{11}/20),&q_{21}=(0.42+0.26i,1.29\sqrt{11}+0.39),\\
q_{22}=(-1.4+1.4i\sqrt{11},4\sqrt{11}/5),&q_{23}=(-1/4+i\sqrt{11}/4,\sqrt{11}/2).
\end{array}
$$

Observe that the points $q_{12},q_{20},q_{23}$ lie on the edges $(v^+_1,v^-_1),$ $(v^+_1,v^+_2)$ and $(v^+_3,v^-_3)$ respectively and the points $q_{17},q_{19}$ lie on the edge $(v^+_2,v^-_2)$. Moreover, the points $q_{13},$ $q_{14},$ $q_{15},$ $q_{22}$ lie on the interior of the face $(v^+_1,v^-_1,v^-_3,v^+_3)$,
the point $q_{16}$ lies on the interior of the face $(v^+_2,v^-_2,v^-_3,v^+_3)$, the point $q_{18}$ lies on the interior of the face
$(v^+_1,v^-_1,v^-_2,v^+_2)$ and the points $q_{21}$ lies on the interior of the top $(v^+_1,v^+_2,v^+_3)$.
We need to add other three points in the interior of the prism $\mathbf{\Sigma}_{11}$ which are used to define the cone-polygon,
$$ \begin{array}{l}
q_{24}=(-0.16+0.74i,1.4),\\
q_{25}=(0.328-0.28i,1.99),\\
q_{26}=(0.325+0.29i,4.652).
\end{array}
$$

We verify the location of all these points as follows:

$\bullet$\ The point $q_{12}$ is in the intersection of the interiors of  $R^{(11)}_1R^{(11)}_3R^{(11)}_2(\mathcal{S}_0)$ and $\mathcal{S}_0$;

$\bullet$\ The point $q_{13}$ is in the intersection of the interiors of $\mathcal{S}_0$, $R^{(11)}_1R^{(11)}_2(\mathcal{S}_0)$ and $R^{(11)}_1R^{(11)}_3R^{(11)}_2(\mathcal{S}_0)$;

$\bullet$\ The point $q_{14}$ is in the intersection of the interiors of $\mathcal{S}_0$, $R^{(11)}_2(\mathcal{S}_0)$ and $R^{(11)}_1R^{(11)}_2(\mathcal{S}_0)$;

$\bullet$\ The point $q_{15}$ is in the intersection of the interiors of $\mathcal{S}_0$, $R^{(11)}_2(\mathcal{S}_0)$, $R^{(11)}_3(\mathcal{S}_0)$ and $R^{(11)}_3R^{(11)}_2(\mathcal{S}_0)$;

$\bullet$\ The points $q_{16},q_{19},q_{20}$ are in the intersection of the interiors of $R^{(11)}_2(\mathcal{S}_0)$ and $R^{(11)}_3(\mathcal{S}_0)$;

$\bullet$\ The point $q_{17}$ is in the intersection of the interiors of $\mathcal{S}_0$ and $R^{(11)}_2(\mathcal{S}_0)$;

$\bullet$\ The point $q_{18}$ is in the intersection of the interiors of $\mathcal{S}_0$, $R^{(11)}_2(\mathcal{S}_0)$ and $R^{(11)}_1R^{(11)}_3R^{(11)}_2(\mathcal{S}_0)$;

$\bullet$\ The point $q_{21}$ is in the intersection of the interiors of $T^{(11)}(\mathcal{S}_0)$, $R^{(11)}_2(\mathcal{S}_0)$ and $R^{(11)}_3(\mathcal{S}_0)$;

$\bullet$\ The point $q_{22}$ is in the intersection of the interiors of $R^{(11)}_1R^{(11)}_2(\mathcal{S}_0)$, $R^{(11)}_2(\mathcal{S}_0)$ and $R^{(11)}_3(\mathcal{S}_0)$;

$\bullet$\ The point $v^{+}_0$ is in the intersection of the interiors of $R^{(11)}_1R^{(11)}_2(\mathcal{S}_0)$, $T^{(11)}(\mathcal{S}_0)$ and $R^{(11)}_2(\mathcal{S}_0)$;

$\bullet$\ The point $q_{23}$ is in the intersection of the interiors of $\mathcal{S}_0$, $R^{(11)}_3(\mathcal{S}_0)$ and $R^{(11)}_3R^{(11)}_2(\mathcal{S}_0)$;

$\bullet$\ The point $q_{24}$ is in the intersection of the interiors of $\mathcal{S}_0,$ $R^{(11)}_2(\mathcal{S}_0),$ $R^{(11)}_3(\mathcal{S}_0)$ and $R^{(11)}_3R^{(11)}_2(\mathcal{S}_0)$;

$\bullet$\ The point $q_{25}$ is in the intersection of the interiors of $\mathcal{S}_0,$ $R^{(11)}_2(\mathcal{S}_0),$ $ R^{(11)}_1R^{(11)}_2(\mathcal{S}_0)$ and $R^{(11)}_1R^{(11)}_3R^{(11)}_2(\mathcal{S}_0)$;

$\bullet$\ The point $q_{26}$ is in the intersection of the interiors of $T^{(11)}(\mathcal{S}_0),$ $R^{(11)}_2(\mathcal{S}_0),$ $R^{(11)}_3(\mathcal{S}_0)$ and $R^{(11)}_1R^{(11)}_2(\mathcal{S}_0)$.

In term of these, we denote by $X_1$ a closed polygonal chain joining in order with the points $p_{12},$ $p_{13},$ $p_{14},$ $p_{15},$ $p_{16},$ $p_{17},$ $p_{18}$ and denote by $X_2$ a closed polygonal chain joining in order with the points $p_{16},$ $p_{19},$ $p_{20},$ $p_{21},$ $v^+_0$, $p_{22},$ $p_{24}$. So then we can define two cone-polygons $\mathbb{D}_{q_{25}}(X_1)$ and $\mathbb{D}_{q_{26}}(X_2)$. By examining the locations of these points, we show that $\mathbb{D}_{q_{25}}(X_1)$ is in the intersection of the interiors of $\mathcal{S}_0$, $\mathcal{U}_1$ and $\mathbb{D}_{q_{26}}(X_2)$ is in the intersection of the interiors of $R^{(11)}_2(\mathcal{S}_0)$ and
$T^{(11)}(\mathcal{S}_0),$ $R^{(11)}_1R^{(11)}_2(\mathcal{S}_0),$ $R^{(11)}_3(\mathcal{S}_0)$, namely, the intersection of the interiors of $\mathcal{U}_1$ and $\mathcal{U}_2$. The remaining faces can be easily verified which are contained inside
$\mathcal{S}_0,\mathcal{U}_1$ or $\mathcal{U}_2$ .

Finally, we define three polyhedral cells as follows:

(i)\ The polyhedral cell $\mathbb{P}_1$ is defined by the faces $\mathbb{D}_{q_{25}}(X_1),$ $\mathbb{D}_{q_{26}}(X_2),$ $(v^+_1,q_{12},q_{18},q_{17},q_{19},q_{20})$, $(v_1^+,q_{12},q_{13},q_{14},q_{15},q_{22},v^+_0)$ and $(v^+_1,q_{20},q_{21},v^+_0)$  as its boundary;

(ii)\ The polyhedral cell $\mathbb{P}_2$ is defined by the faces $\mathbb{D}_{q_{26}}(X_2),$ $(q_{15},q_{23},q_{24}),$ $(v^+_1,q_{20},q_{21},v^+_0)$ $(v^+_1,q_{12},q_{18},q_{17},q_{19},q_{20})$, $(v_1^+,q_{12},q_{13},q_{14},q_{15},q_{22},v^+_0)$ and  $(q_{23},q_{16},q_{24})$ as its boundary;

(iii)\ The polyhedral cell $\mathbb{P}_3$ is defined by the faces $\mathbb{D}_{q_{25}}(X_1),$ $(q_4,q_5,q_7),$  $(q_5,q_7,q_8),$ $(q_4,q_5,v^-_0),$ $(q_{15},q_{23},q_{24}),$ $(q_8,q_9,v^-_3,q_{23},q_{16},q_{17})$, $(q_{23},q_{16},q_{24})$, $(q_4,q_7,q_8,q_{17},q_{18},q_{12})$, $(v^-_3,v^-_0,q_5,q_9)$ and $(q_{12},q_{13},q_{14},q_{15},q_{23},v^-_3,v^-_0,q_4)$ as its boundary;

By Lemma 4.1 and the properties of star-convex of $\mathcal{U}_1$ and $\mathcal{U}_2$, we conclude that the polyhedral cell $\mathbb{P}_1$ contained inside $\mathcal{U}_1$; the polyhedral cell $\mathbb{P}_2$ contained inside $\mathcal{U}_2$; the polyhedral cell $\mathbb{P}_3$ is contained inside $\mathcal{S}_0$. This completes the proof.
\end{proof}

\medskip

\end{document}